\theoremstyle{plain}
\newtheorem*{maintheorem1*}{Theorem A [Construction]} 
\newtheorem*{maintheorem2*}{Theorem B [Classification]}
\newtheorem*{thm*}{Theorem}
\newtheorem*{thma*}{Theorem A}
\newtheorem*{thmaa*}{Theorem A'}
\newtheorem*{thmb*}{Theorem B}
\newtheorem*{thmo*}{Theorem 1.1}
\newtheorem*{thmc*}{Theorem C}
\newtheorem*{thmd*}{Theorem D}
\newtheorem*{thmf*}{Theorem 4.1}
\newtheorem*{conjecture*}{Conjecture}
\newtheorem*{prop*}{Proposition}
\newtheorem{thm}{Theorem}
\newtheorem{cor}[thm]{Corollary}
\newtheorem{lem}[thm]{Lemma}
\newtheorem{prop}[thm]{Proposition}
\theoremstyle{definition}
\newtheorem*{proofc*}{Proof of Theorem C}
\newtheorem{question}[thm]{Question}
\newtheorem{definition}[thm]{Definition}
\newtheorem{remark}[thm]{Remark}
\newcommand{\leftexp}[2]{{\vphantom{#2}}^{#1}{#2}}
\def\o{\mathcal{O}}
\def\gcal{\mathcal{G}}
\def\pcal{\mathcal{P}}
\def\lcal{\mathcal{L}}
\def\cal{\mathcal{H}}
\def\bbz{\mathbb{Z}}
\def\bbq{\mathbb{Q}}
\def\bbr{\mathbb{R}}
\def\bba{\mathbb{A}}
\def\bbh{\mathbb{H}}
\def\bbg{\mathbb{G}}
\def\adbbg{\overline{\mathbb{G}}}
\def\bbt{\mathbb{T}}
\def\bbb{\mathbb{B}}
\def\gfr{\mathfrak{g}}
\def\hfr{\mathfrak{h}}
\def\Hfr{\mathfrak{H}}
\def\mfr{\mathfrak{m}}
\def\pfrl{\mathfrak{P}}
\def\pfr{\mathfrak{p}}
\def\f{\mathfrak{f}}
\def\F{\mathfrak{F}}
\def\L{\mathfrak{L}}
\def\sfr{\mathfrak{s}}
\def\vare{\varepsilon}
\def\adg{\overline{\mathbb{G}}}
\def\adgcal{\overline{\mathcal{G}}}
\def\Aut{{\rm Aut}}
\def\vol{{\rm vol}}
\def\Ccal{\mathcal{C}}
\def\dcal{\mathcal{D}}
\def\A{{\rm A}}
\def\Ad{{\rm Ad}}
\def\GL{{\rm GL}}
\def\rank{{\rm rank}}
\def\Spec{{\rm Spec}}
\def\Div{{\rm Div}}
\def\Cl{{\rm Cl}}
\def\cl{{\rm cl}}
\def\h{\hspace{1mm}}
\def\xvec{\overrightarrow{x}}
\def\cdim{{\rm codim}}
\def\lmbb{\overline{\mathbb{M}}}
\def\lmcal{\overline{\mathcal{M}}}
\title{Counting lattices in simple Lie groups:\\ the positive characteristic case.}
\author{Alireza Salehi Golsefidy\footnote{A. S-G. was partially supported by the NSF grant DMS-0635607 and the NSF grant DMS-1001598.}}
\date{July 29, 2011}
\begin{document}
\maketitle
\begin{abstract}
\noindent
In this article we prove a conjecture of A. Lubotzky:
let  $G=\bbg_0(K)$, where $K$ is a local field of characteristic $p\ge 5$, $\bbg_0$ is a simply connected, absolutely almost simple $K$-group of $K$-rank at least 2. We give the rate of growth of
 \[
\rho_x(G):=|\{\Gamma\subseteq G| \text{$\Gamma$ a lattice in $G$},\h \vol(G/\Gamma)\le x\}/\sim|, 
\]
where $\Gamma_1\sim \Gamma_2$ if and only if there is an abstract automorphism $\theta$ of $G$ such that $\Gamma_2=\theta(\Gamma_1)$. We also study the rate of subgroup growth $s_x(\Gamma)$ of any lattice $\Gamma$ in $G$. As a result we show that these two functions have the same rate of growth which proves Lubotzky\rq{}s conjecture. 

Along the way, we also study the rate of growth of the number of equivalence classes of maximal lattices in $G$ with covolume at most $x$.
\end{abstract}
\section{Introduction and statement of results.}
Let $G$ be a semisimple Lie group with a fixed Haar measure $\mu$. Let $\rho_x(G)$ be the number of lattices (i.e. discrete subgroups of finite covolume) in $G$ of
covolume at most $x$, up to an automorphism of G, i.e. the same definition as the one given in the abstract for any semisimple Lie group. Much attention has been 
given in recent years to the question of determining the rate of growth of $\rho_x(G)$ where $G$ 
is a simple real Lie group. In this paper, we focus on the case when $G$ is a simple group over a 
local field of positive characteristic and determine the rate of growth of $\rho_x(G)$. 
For the most general cases, our result depends on several well-founded conjectures, which are 
known to hold in most cases. We should say that in the characteristic zero case, as the group of 
outer automorphisms of $G$ is finite, one can instead count conjugacy 
classes of lattices. However in the positive characteristic the group of outer automorphisms 
$\Aut(G)/{\rm Inn}(G)$ is an infinite compact group, which adds more complications to all the arguments in this work.
\begin{thm}\label{t:CountingLattices}
Let $G=\bbg_0(K)$, where $K$ is a local field of characteristic $p\ge 5$ and $\bbg_0$ is a simply connected absolutely almost simple $K$-group of $K$-rank bigger than 1. Assuming the CSP, MP and the Weil conjecture hold for any group with the same absolute type as $\bbg_0$, there exist positive numbers $C$ and $D$ depending only on $G$ such that
\[
x^{C\log x}\le \rho_x(G) \le x^{D\log x},
\]
for all sufficiently large $x$. Moreover the lower bound is unconditional. 
\end{thm}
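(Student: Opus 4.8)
The plan is to prove the two inequalities separately, using arithmeticity as the common starting point. Since $\bbg_0$ is absolutely almost simple of $K$-rank $>1$, Margulis arithmeticity (valid for higher rank groups in characteristic $p\ge 5$) shows that every lattice $\Gamma$ in $G$ is arithmetic: it is commensurable, up to a compact factor, with $\mathbf{G}(\mathcal{O}_{\{v\}})$ for some global function field $k$, a place $v$ of $k$ with $k_v\cong K$, and a $k$-form $\mathbf{G}$ of $\bbg_0$; here $\rank_{k_v}\mathbf{G}=\rank_K\bbg_0>1$, so $\mathbf{G}(\mathcal{O}_{\{v\}})$ is finitely generated. For the upper bound one may replace $\Aut(G)$-equivalence by conjugacy, since the coarser equivalence only decreases the count; the genuine difficulty created by $\Aut(G)/{\rm Inn}(G)$ being an infinite compact group enters in the lower bound, where one must check that the constructed lattices stay pairwise inequivalent after dividing by all abstract automorphisms, and in the classification underlying the upper bound.

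\textbf{Lower bound (unconditional).} Fix one arithmetic lattice $\Gamma_0=\mathbf{G}(\mathcal{O}_{\{v\}})$ in $G$ and a prime $\mathfrak p$ of $\mathcal{O}_{\{v\}}$ of large degree. I would look at the congruence filtration $\Gamma_0\supseteq\Gamma_0(\mathfrak p)\supseteq\Gamma_0(\mathfrak p^2)$: the sandwiched quotient is, up to bounded error, the additive group of $\Lie(\bbg_0)$ over the residue field $\bbf_{\mathfrak p}$, an elementary abelian $p$-group whose rank $N$ is proportional to $\deg\mathfrak p$. Its subgroups of index $p^{\lfloor N/2\rfloor}$ number $\asymp p^{N^2/4}$ by the Gaussian binomial estimate; pulling them back to $\Gamma_0$ yields that many sublattices, each of covolume $\asymp\covol(\Gamma_0)\cdot p^{\,cN}$ with $c$ a fixed constant. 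Choosing $\deg\mathfrak p$ of order $\log x$ turns this into $x^{C\log x}$ sublattices of covolume $\le x$. It remains to bound overcounting: if an abstract automorphism $\theta$ of $G$ carries one finite-index subgroup of $\Gamma_0$ onto another, then $\theta$ normalizes the commensurator of $\Gamma_0$, which is $\mathbf{G}(k)$ (acting by conjugation) extended by the finitely many automorphisms of $k$; hence $\theta$ ranges over a set acting with polynomially bounded orbits on the finite family in play, so dividing by $\Aut(G)$ costs only a factor $x^{O(1)}$, negligible against $x^{C\log x}$. This is the content of our construction (Theorem A).

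\textbf{Upper bound (conditional).} I would proceed in two stages. First, count commensurability classes: a class is determined by its arithmetic datum $(k,v,\mathbf{G})$, and Prasad's volume formula --- valid given the Weil conjecture on Tamagawa numbers --- bounds the minimal covolume of the class from below by a fixed positive power of the discriminant of $k$ times the norms of the ramification data of $\mathbf{G}$. Combined with the (at most polynomial) count of function fields of bounded discriminant and of forms of bounded ramification, this gives at most $x^{O(1)}$ commensurability classes whose minimal covolume is $\le x$. Second, count within a class of minimal covolume $V_0$: every lattice of covolume $\le x$ in the class is a subgroup, of index at most $n:=x/V_0$, of one of the (finitely many, up to conjugacy) maximal lattices of the class; and by the congruence subgroup property --- which in positive characteristic rests on CSP together with MP (controlling the normal-subgroup structure so as to force the congruence kernel to be central) --- every such subgroup is a congruence subgroup, hence is counted among the $\le n^{O(\log n)}$ congruence subgroups of index $\le n$. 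Summing over classes, $\rho_x(G)\le x^{O(1)}\cdot x^{O(\log x)}=x^{D\log x}$ for all large $x$. Here the passage to $\Aut(G)$-equivalence is harmless, as noted above, and the classification of maximal lattices in each class is our Theorem B.

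\textbf{Main obstacle.} Two features special to positive characteristic make this harder than the real case. The pervasive one, already highlighted in the introduction, is that $\Aut(G)/{\rm Inn}(G)$ is an infinite compact group: it must be kept under control in the lower-bound count, and, more seriously, ``maximal lattice'' must be read relative to $\Aut(G)$ rather than to $G$, so that the Borel--Prasad-type classification behind the upper bound has to be redone in this setting. The second is the $n^{O(\log n)}$ bound on congruence subgroup growth: congruence subgroups of $\mathbf{G}(\mathcal{O}_{\{v\}})$ of index $\le n$ correspond to open subgroups of index $\le n$ in $\prod_{\mathfrak q}\mathbf{G}(\mathcal{O}_{\mathfrak q})$, but the local pro-$p$ factors $\mathbf{G}(\bbf_q[[t]])$ are not $p$-adic analytic --- their open subgroups have unbounded rank --- so they no longer have polynomial subgroup growth, and the prime-by-prime, level-by-level argument of Lubotzky must be replaced by a finer analysis of the congruence quotients $\mathbf{G}(\mathcal{O}/\mathfrak q^e)$. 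This is the step I expect to be the main technical difficulty.
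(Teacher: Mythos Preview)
Your architecture matches the paper's: lower bound via subspaces of a congruence quotient, upper bound via (maximal lattices) $\times$ (subgroup growth within each). The lower bound is essentially the paper's argument (Section~\ref{ss:LatticesLowerBound}), including the use of Margulis--Prasad rigidity to control the $\Aut(G)$-orbits.

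There are, however, two genuine gaps in the upper bound.

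\emph{First, the function-field count.} You assert a polynomial bound on the number of function fields of bounded discriminant. Over $\bbf_q$ this is the number of curves of genus $\le g$, and the only known bound is de Jong--Katz's $c(q)^{g\log g}$; a polynomial bound is precisely their open question. The paper accordingly obtains only $m_x(G)\le x^{B\log\log x}$ for maximal lattices (Theorem~\ref{t:maximal}), not $x^{O(1)}$. This does not damage the final inequality, since $x^{B\log\log x}$ is swallowed by $x^{D\log x}$, but your stated bound is unproved.

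\emph{Second, and more seriously, the congruence-subgroup-growth bound.} You correctly identify $s_n(\Gamma)\le n^{O(\log n)}$ as the crux and correctly note that the pro-$p$ factors $\bbg(\o_{\pfr})$ are not $p$-adic analytic, so the characteristic-zero argument fails; but you then stop, saying only that ``a finer analysis of the congruence quotients'' is needed. This is the main content of the paper, and it is not a routine refinement. Two issues must be handled simultaneously. The constants must be \emph{uniform in the lattice}: different maximal $\Gamma$ arise from different $(k,\bbg,\{P_\pfr\})$, and the bound must depend only on $G$, not on $k$ or on the types $\Theta_\pfr$. And the graded Lie algebra $\oplus_i P_\pfr^{(i)}/P_\pfr^{(i+1)}$ attached to an arbitrary parahoric of an arbitrary (non-split) form has no transparent description over $\f_\pfr$. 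The paper's solution is to base-change to the maximal unramified extension $\widehat{k}_\pfr$, use Prasad--Raghunathan to embed the resulting graded algebra, with bounded codimension, into the graded algebra of a single ``special'' parahoric $\L_{\{\psi_s\}}$ (Corollaries~\ref{c:ChangeParahorics} and~\ref{c:SpecialAlgebra}), identify the latter explicitly as $\oplus_{i\ge 1}\gfr_{i\bmod r}\otimes t^i$ for a perfect $\bbz/r\bbz$-graded algebra $\widehat\gfr$, and then prove the purely algebraic Theorem~\ref{t:GradedLie}: for any subalgebra $\hfr$ of finite codimension in $\L^D$, $\cdim[\hfr,\hfr]\le C(\cdim\hfr+D)$ with $C=C(\widehat\gfr)$ independent of $D$. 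It is this last statement, applied with $D\asymp\log x$, that converts into $s_x\le x^{O(\log x)}$ uniformly. Without it, or an alternative, your upper bound is a strategy rather than a proof.
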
 
\noindent
We will explain below what the Weil conjecture, CSP and MP are and when they are known to hold. Let us first put this theorem in the perspective of the previous works as it represents a phenomenon which is different than the characteristic zero case. 
\\

\noindent
M.~Burger, T.~Gelander, A.~Lubotzky, and S.~Mozes~\cite{BGLM} studied this problem for 
$G={\rm PO}(n,1)$, for $n\ge 4$. Their approach was geometric and they counted the torsion 
free lattices. As a result, they proved that the rate of growth of $\rho^{\circ}_x(G)$ is the same as the rate of growth of  the number $s_x(\Gamma)$ of subgroups of index at most $x$ in a certain lattice $\Gamma$ in $G$. Lubotzky conjectured that this should be a general phenomenon, namely the rate of growth of $\rho_x(G)$ should be the same as the rate of subgroup growth of any lattice in $G$ (for further developments for the arithmetic subgroups in ${\rm PO}(2,1)$ or ${\rm PO}(3,1)$ see~\cite{BGLS}). Belolipetsky and Lubotzky~\cite{BL} showed that this conjecture is not true in the characteristic zero case. In contrast, Theorem~\ref{t:CountingLattices} proves Lubotzky's conjecture in the positive characteristic case.
\\

\noindent
The general strategy is to divide the problem into three parts. First one has to understand the 
asymptotic behavior of $m_x(G)$, the number of maximal lattices, up to an automorphism of $G$, 
with covolume at most $x$. Second the rate of growth of $s_x(\Gamma)$, where $\Gamma$ is a 
lattice in $G$, has to be described. Finally these results have to be carefully combined to get the 
rate of growth of $\rho_x(G)$. Our result for counting  maximal lattices is:
\begin{thm}~\label{t:maximal}
For a given $G$ as above. Assuming the Weil conjecture holds for any group with the same 
absolute type as $\bbg_0$, there are positive numbers $A$ and $B$ which depend only on $G$ 
such that,
\[
x^A\le m_x(G)\le x^{B\log\log x},
\]
for all sufficiently large $x$. The lower bound holds unconditionally.
\end{thm}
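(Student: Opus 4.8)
\noindent\emph{Proof strategy.} The plan is to turn the question into counting ``arithmetic data'' by means of Margulis arithmeticity, the Borel--Prasad description of maximal arithmetic subgroups, and Prasad's volume formula, and then to bound the number of global data and of local data separately.

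Since $\bbg_0$ has $K$-rank $\ge 2$, Margulis' arithmeticity theorem --- valid in positive characteristic --- shows that every lattice in $G$, in particular every maximal one, is $S$-arithmetic; and by the Borel--Prasad/Rohlfs structure theory, which must be re-proved in characteristic $p$ (this is where $p\ge 5$ is used), a maximal lattice $\Gamma$ is, up to conjugacy, the normalizer $N_G(\Lambda)$ of a maximal principal arithmetic subgroup $\Lambda$, with $[N_G(\Lambda):\Lambda]$ bounded by a power of the discriminant of $\Lambda$. Such a $\Lambda$ is given by a global function field $k$ with a place $v_0$ for which $k_{v_0}\cong K$, a connected simply connected absolutely almost simple $k$-group $\mathbf G$ with $\mathbf G\times_k K\cong\bbg_0$, a finite set $S$ of places consisting of $v_0$ together with the finitely many $v$ at which $\mathbf G$ is anisotropic, and a coherent family $(P_v)_{v\notin S}$ of parahoric subgroups, hyperspecial for all but finitely many $v$. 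Prasad's volume formula --- where the Weil (Tamagawa number) conjecture enters --- then expresses $\vol(G/\Lambda)$, up to a constant depending only on $\bbg_0$, as a ``discriminant term'' $\mathcal D(k,\ell)\ge q^{a(g_k+g_\ell)}$, with $\ell$ the splitting field of $\mathbf G$ (so $[\ell:k]\le 6$) and $a>0$ explicit, times a product $\prod_v\lambda_v$ of local factors, each $\ge 1$, equal to $1$ precisely at hyperspecial unramified places, and with every nontrivial factor $\ge c\,q_v$ for a constant $c>0$ depending only on the absolute type.

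For the upper bound, $\vol(G/\Gamma)\le x$ forces $\vol(G/\Lambda)\le x^{O(1)}$ (via the two lower bounds just stated and the normalizer index), hence the constant field of $k$ embeds in the fixed finite residue field of $K$, one has $g_k,g_\ell=O(\log x)$, and the set $T$ of places where $P_v$ is non-hyperspecial or $\mathbf G$ ramifies satisfies $|T|=O(\log x)$ and $\prod_{v\in T}q_v=O(x)$. The number of function fields of genus $\le g$ over a fixed finite field is $\exp(O(g\log g))$, so with $g=O(\log x)$ at most $x^{O(\log\log x)}$; for each such $k$, the numbers of admissible $v_0$, of splitting fields $\ell$, of $k$-forms $\mathbf G$ with ramification of norm $O(x)$, and of sets $S$, are each $x^{O(1)}$; and the number of admissible parahoric families is $x^{O(1)}$, essentially the number of square-free effective divisors of $k$ of norm $O(x)$ times boundedly many parahoric types per place. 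Multiplying, and using that passing to $\Aut(G)$-equivalence only decreases the count, gives $m_x(G)\le x^{B\log\log x}$.

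For the lower bound, fix the cheapest datum: $k=\bbf_q(t)$, $v_0$ the place at infinity (so $k_{v_0}\cong K$), and a fixed global $k$-model $\mathbf G$ of $\bbg_0$ --- such a model exists because every $K$-form of $\bbg_0$ is the localization of some $k$-form, the inner twist coming from a global central simple algebra with prescribed invariant at $v_0$ via the Brauer-group sequence of $k$, and the outer twist from a suitable global separable extension. Starting from the principal arithmetic subgroup with all $P_v$ hyperspecial, of covolume $c_0$, replacing a single $P_v$ by a non-hyperspecial maximal parahoric and taking the normalizer produces a maximal lattice of covolume $\le c_0\,q_v^{O(1)}$; since $k$ has $\sim q^n/n$ places of residue cardinality $q^n$, letting $q_v$ range up to $x^{1/O(1)}$ yields $\gg x^{A}$ maximal lattices of covolume $\le x$. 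The crux --- and the main obstacle --- is that these must be pairwise $\Aut(G)$-inequivalent: in positive characteristic $\Aut(G)/\mathrm{Inn}(G)$ is infinite (it contains every continuous field automorphism of $K$), so one must first prove a Borel--Tits/super-rigidity statement that an abstract automorphism of $G$ carrying one $S$-arithmetic lattice to another is, modulo $\mathrm{Inn}(G)$, an algebraic automorphism composed with a field automorphism of $K$, then control how field automorphisms act on the Haar measure (they may rescale it) and on the invariant ``set of places of local non-maximality'', and finally check that this invariant together with the covolume separates the lattices above. This, along with securing the characteristic-$p$ versions of arithmeticity, of the Borel--Prasad theory, and of Prasad's formula --- the reason $p\ge 5$ and the Weil conjecture are assumed --- is where the real work lies.
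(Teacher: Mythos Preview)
Your strategy matches the paper's: parameterize maximal lattices via Margulis arithmeticity and Rohlfs' criterion by the data $(k,\ell,\bbg,\{P_\pfr\})$, bound each piece using Prasad's volume formula and a ``main inequality'' of the shape $\vol(G/\Gamma)\gg q_k^{\sigma_1 g_k+\sigma_2 g_\ell}\prod_{\pfr\in\mathfrak R}q_\pfr^\sigma$, and invoke the de Jong--Katz curve count for the $x^{B\log\log x}$ term. Two steps in your upper bound, however, are genuine gaps rather than routine bookkeeping.

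First, your count of parahoric families stops at types: you bound the set $T$ of bad places and the finitely many parahoric types at each. But two coherent families in the same $\adbbg(\bba_k)$-orbit can still give $\adbbg(k)$-inequivalent (hence $\Aut(G)$-inequivalent) principal arithmetic subgroups; the number of such is the class number $\cl(\adbbg,\{\overline{P}_\pfr\})$, and bounding this by $x^c$ is a separate theorem in the paper, proved via strong approximation for $\bbg$ and an explicit computation with $\bigoplus_\pfr H^1(k_\pfr,\mu)/\ker\xi_\pfr$ modulo the diagonal image of $H^1(k,\mu)$. Second, you assert that the number of $k$-forms $\bbg$ with ramification of norm $O(x)$ is $x^{O(1)}$, but this requires a local--global injection $H^1(k,\adgcal)\hookrightarrow\prod_\pfr H^1(k_\pfr,\adgcal)$. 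In positive characteristic $\mu$ need not be smooth, so this must be done in flat cohomology; the paper proves it through a chain of lemmas on $H^2$ of $\mu_n$, $R_{l/k}(\mu_n)$, and $R^{(1)}_{l/k}(\mu_n)$ (using Brauer--Hasse--Noether and, in one case, the Hasse norm theorem) together with the twisting trick and $H^1(k,\gcal)=1$.

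For the lower bound your outline is correct, including the appeal to the Margulis and Prasad theorems on abstract isomorphisms to control $\Aut(G)$-equivalence via $\Aut(k)$ (which is finite). But the claim that ``taking the normalizer produces a maximal lattice'' is not automatic. The paper proves this separately, and for inner forms of type $\mathrm A_r$ your construction actually fails: a single non-hyperspecial vertex does not yield a maximal $\Gamma_D$. One must instead take $\Theta'_\pfr$ to be a length-$p$ orbit under the cyclic rotation of the affine $\mathrm A_r$ diagram (for $p\mid r+1$) and then verify maximality using Rohlfs' exact sequence and the explicit identification $\delta(\adbbg(k))\simeq k^\times/(k^\times)^{r+1}$.
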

\noindent
This is the characteristic $p$ analogue of Belolipetsky's~\cite{Be} result for the characteristic zero 
case; but we should stress that his proof relies on various results which are either only suitable for 
number fields or only known in that case. In the positive characteristic case, we have to employ 
certain results from algebraic geometry and prove a local-global theorem for adjoint quasi-split 
groups. We also give a method to get an upper bound for the class number of a coherent family of 
parahorics which also works in the characteristic zero case. As a result, in the characteristic $p$ 
case, our upper bound $x^{B\log\log x}$ is better than Belolipetsky's upper bound $x^{(\log x)^{\vare}}$. 
\\

\noindent
Another challenge we face is with the subgroup growth. Here the results on counting congruence 
subgroups were known only for globally split lattices. One of the difficulties lies on the fact that 
structure of certain graded Lie algebras is complicated if the global form is not split. In order to 
overcome this difficulty, conceptually, we make a base change to an unramified extension in order 
to get a quasi-split group. Then using results of Prasad-Raghunathan~\cite{PR} and changing the 
grading on the Lie algebra, we get the rate of $c_x(\bbg(\o(\pfr_0)))$ congruence subgroup 
growth of $\bbg(\o(\pfr_0))$, where $\pfr_0$ is a place of a global function field $k$, $\o(\pfr_0)$ 
is the ring of $\pfr_0$-integers of $k$, $\bbg$ is an absolutely almost simple $k$-group with a 
fixed $k$-embedding into $\mathbb{GL}_n$ and $\bbg(\o(\pfr_0))=\bbg\cap \GL_n(\o(\pfr_0))$. 
This result extends the result of Ab\'{e}rt, Nikolov and Szegedy~\cite{ANS}, who showed it for 
$k$-split groups.
\begin{thm}\label{t:SubgroupGrowth}
In the above setting, there exist positive numbers $C$ and $D$ depending on 
$\Lambda_0=\bbg(\o(\pfr_0))$ such that
\[
x^{C \log x}\le c_x(\Lambda_0)\le x^{D \log x},
\]
for all sufficiently large $x$. 
\end{thm}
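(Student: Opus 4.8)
The plan is to pass to the congruence completion, identify it via strong approximation, and then convert the counting problem into a question about graded Lie rings. Write $\o_\pfr$ for the completion of $\o(\pfr_0)$ at a place $\pfr\neq\pfr_0$, $q_\pfr=|\o_\pfr/\pfr|$, and $\bbg(\o_\pfr)$ for the compact open group of $\o_\pfr$-points of the fixed model; for all but finitely many $\pfr$ this model is hyperspecial, and the remaining ``bad'' places will be the source of every difficulty below. Since $\bbg$ is simply connected and $\bbg_0(K)$ is noncompact, strong approximation for $\bbg$ with respect to $\{\pfr_0\}$ shows that $\Lambda_0=\bbg(\o(\pfr_0))$ is dense in $\prod_{\pfr\neq\pfr_0}\bbg(\o_\pfr)$; hence this product is the congruence completion of $\Lambda_0$ and $c_x(\Lambda_0)=s_x\big(\prod_{\pfr\neq\pfr_0}\bbg(\o_\pfr)\big)$, the number of open subgroups of index at most $x$. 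For the lower bound, fix one hyperspecial place $\pfr$: the first congruence quotient $K_\pfr^{(1)}/K_\pfr^{(2)}$ is the additive group $\gfr(\bbf_{q_\pfr})\cong\bbf_p^{\,d f_\pfr}$, where $d=\dim\bbg$ and $q_\pfr=p^{f_\pfr}$, and each of its $\bbf_p$-subspaces of dimension $\lfloor df_\pfr/2\rfloor$ pulls back to a congruence subgroup of $\Lambda_0$ of index $\asymp q_\pfr^{3d/2}$. There are at least $p^{\,c(df_\pfr)^2}$ such subspaces for a universal $c>0$; writing $x$ for the index and eliminating $f_\pfr$ gives $\ge x^{C\log x}$ for some $C=C(p,d)>0$. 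This is unconditional.

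For the upper bound the key point is a bound on the \emph{level}: there is a constant $c_1$, independent of $\pfr$, such that every open $H\le\bbg(\o_\pfr)$ of index $n$ has $K_\pfr^{(N)}\subseteq H$ for some $N\le c_1(\log_{q_\pfr}n+1)$. Granting this, $H$ is a subgroup of the finite group $\bbg(\o_\pfr/\pfr^N)$, whose order is $\asymp q_\pfr^{dN}\le n^{O(1)}$; a finite group of order $m$ has at most $m^{O(\log m)}$ subgroups (the extreme case being elementary abelian $p$-groups), so $s_n(\bbg(\o_\pfr))\le n^{O(\log n)}$ uniformly in $\pfr$. To prove the level bound one passes to the associated graded $\bigoplus_{i\ge0}K_\pfr^{(i)}/K_\pfr^{(i+1)}$, a graded Lie ring on which $\bbg(\bbf_{q_\pfr})$ acts; writing $L_i$ for the $i$-th layer, the images $W_i$ of $H\cap K_\pfr^{(i)}$ are $\bbg(\bbf_{q_\pfr})$-submodules of $L_i$, full for every $i\ge N$, proper for $i=N-1$, with $[W_i,W_j]\subseteq W_{i+j}$, and $[\bbg(\o_\pfr):H]=\prod_i[\,L_i:W_i\,]$. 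If the layers are modules of bounded length and the bracket is surjective on a fixed family of degree-pairs, then the set $J$ of full layers is, up to a bounded modification, closed under addition; since $J$ contains every degree $\ge N$ but not $N-1$, it must omit at least a fixed positive proportion of $\{0,\dots,N-1\}$, and each omitted layer multiplies the index by at least $q_\pfr$, giving the claim.

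The main obstacle is precisely this last hypothesis at the bad places. For a non-hyperspecial parahoric the layers of the natural congruence filtration are Frobenius-twisted in a way that varies (periodically) with $i$ and need not have bounded length, so the argument does not apply directly; this is where the introduction's reduction enters. The plan is to choose a finite separable extension $\ell/k$, unramified over the bad places, over which $\bbg$ becomes quasi-split there --- possible because reductive groups over $\overline{\bbf_q}((t))$ are quasi-split --- and to use the description of the parahoric subgroups and their Moy--Prasad filtrations of Prasad--Raghunathan to compute the associated graded at a place $\mathfrak{q}\mid\pfr$ of $\ell$. One then replaces the natural $\bbz$-grading by a rescaled and translated one (a suitable rational point of the Bruhat--Tits building, with reductive stabilizer quotient) for which the layers acquire bounded length and the bracket is surjective in the needed degrees; for this grading the level bound goes through. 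Descent along $\ell/k$ costs only a bounded factor, since there are finitely many $\mathfrak{q}\mid\pfr$, of bounded residue degree, and $\bbg(\o_\pfr)$ sits with bounded index in $\prod_{\mathfrak{q}\mid\pfr}\bbg(\o_{\mathfrak{q}})$; hence the level bound, and the one-place estimate, hold uniformly.

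It remains to combine the places. An open $H\le\prod_{\pfr\neq\pfr_0}\bbg(\o_\pfr)$ of index $\le x$ has a finite set $T$ of places at which $\mathrm{proj}_\pfr(H)$ is proper, and at each $\pfr\in T$ a level $N_\pfr$; since every proper open subgroup of $\bbg(\o_\pfr)$ has index $\ge q_\pfr$ and $\prod_{\pfr\in T}[\bbg(\o_\pfr):\mathrm{proj}_\pfr(H)]\le x$, the level bound forces $\sum_{\pfr\in T}f_\pfr N_\pfr=O(\log_p x)$, so the number of possible pairs $(T,(N_\pfr)_{\pfr\in T})$ is at most $x^{O(\log x)}$ and the finite group $\prod_{\pfr\in T}\bbg(\o_\pfr/\pfr^{N_\pfr})$ has order $x^{O(1)}$. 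As $H$ is determined by its image in this finite group, the number of $H$ with prescribed $(T,(N_\pfr))$ is at most $(x^{O(1)})^{O(\log x)}=x^{O(\log x)}$; summing over $(T,(N_\pfr))$ gives $s_x\big(\prod_{\pfr\neq\pfr_0}\bbg(\o_\pfr)\big)\le x^{D\log x}$ for a suitable $D$, which together with the lower bound proves the theorem. The one genuinely delicate step is the uniform level bound at the bad places, i.e. making $\bbg$ quasi-split by an unramified base change and choosing the right grading.
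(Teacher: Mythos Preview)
Your lower bound is essentially the paper's idea, but as written it does not work: you ``fix one hyperspecial place $\pfr$'' and then count subspaces of the single layer $K_{\pfr}^{(1)}/K_{\pfr}^{(2)}$, so both $f_{\pfr}$ and the index $x\asymp q_{\pfr}^{3d/2}$ are bounded and you never reach large $x$. The paper instead fixes $\pfr$ and lets the depth vary, counting subspaces of $P_{\pfr}^{(n)}/P_{\pfr}^{(2n)}$ for $n\to\infty$; this is an easy fix.

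The upper bound has a genuine gap in the ``combine places'' step. You set $T=\{\pfr:\mathrm{proj}_{\pfr}(H)\text{ is proper}\}$ and then assert that $H$ is determined by its image in $\prod_{\pfr\in T}\bbg(\o_{\pfr}/\pfr^{N_{\pfr}})$. Neither claim holds: a subgroup of a product is not determined by its projections, and an open $H$ can have full projection at every place while failing to contain many factors. Concretely, take two places $\pfr_1,\pfr_2$ with $\bbg(\o_{\pfr_1})\cong\bbg(\o_{\pfr_2})$ and set $H=\{(g,\phi(g)):g\in\bbg(\o_{\pfr_1})\}\cdot(K_{\pfr_1}^{(N)}\times K_{\pfr_2}^{(N)})\times\prod_{\pfr\neq\pfr_1,\pfr_2}\bbg(\o_{\pfr})$ for any automorphism $\phi$; then $T=\emptyset$ for every such $H$, yet these subgroups are distinct and have index $\asymp q_{\pfr_1}^{dN}$. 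The paper handles this by first invoking Babai--Cameron--P\'alfy to replace $H$ by a \emph{subnormal} subgroup of comparable index, for which one can prove (Lemma~\ref{l:Subnormal}) that $P_{\pfr}\subseteq H$ for all $\pfr$ outside a set $T$ with $\deg T\ll\log x$; only then is the reduction to $\prod_{\pfr\in T}P_{\pfr}$ legitimate.

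Even after that reduction, your level--bound strategy is too coarse to reach $x^{D\log x}$. For $H\le\prod_{\pfr\in T}P_{\pfr}^{(1)}$ of index $p^{m}$, your additive--combinatorics argument gives $H\supseteq\prod_{\pfr\in T}P_{\pfr}^{(N)}$ with $N\ll m$, hence $H$ lies in a finite $p$-group of order $p^{r}$ with $r\asymp(\deg T)\cdot m$; counting subgroups of index $p^{m}$ there gives $\lesssim p^{m\,r}\le p^{O((\deg T)\,m^{2})}$, and since $\deg T$ can be as large as $c\log x$ this yields only $s_x\le x^{O((\log x)^{2})}$. The paper avoids this loss by bounding not the level of $H$ but its minimal number of generators: passing to the associated graded $\f_p$-Lie algebra $\lcal(H)\subseteq\lcal$, one has $d(H)\le\cdim_{\lcal(H)}[\lcal(H),\lcal(H)]$, and the key Theorem~\ref{t:GradedLie} (after the comparison of parahorics in Corollaries~\ref{c:ChangeParahorics}--\ref{c:SpecialAlgebra}, which is the correct version of your ``regrading after unramified base change'') gives $\cdim_{\lcal(H)}[\lcal(H),\lcal(H)]\ll\deg T+\cdim_{\lcal}\lcal(H)$. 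This produces $d_i\ll\deg T+i$ and hence $\sum_{i\le m}d_i\ll m^{2}$, which is exactly what is needed.
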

\noindent
Indeed, our method of proof refines the above theorem and gives an explicit dependence of the constants on a given lattice in $G$. 
\begin{cor}\label{c:SubgroupGrowth}
Assuming the CSP, MP and the Weil conjecture hold for any group with the same absolute type as $\bbg_0$, there are positive numbers $C$, $D$, and $x_0$  depending only on $G$ such that for any lattice $\Gamma$ in $G$
\[
x^{C \log x/\log (\vol(G/\Gamma))}\le s_x({\Gamma})\le (\vol(G/\Gamma)\cdot x)^{D \log (\vol(G/\Gamma)\cdot x)},
\]
for any positive number $x\ge x_0$.
\end{cor}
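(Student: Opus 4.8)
The plan is to deduce the corollary from Theorem \ref{t:SubgroupGrowth} by making explicit how the subgroup growth of an arbitrary lattice $\Gamma$ in $G$ relates to that of the distinguished arithmetic lattice $\Lambda_0=\bbg(\o(\pfr_0))$, while keeping track of the dependence on $\vol(G/\Gamma)$. First I would invoke Margulis arithmeticity (available here since $\rank_K\bbg_0>1$): every lattice $\Gamma$ in $G$ is commensurable with an arithmetic lattice arising from some global function field $k$ with a distinguished place $\pfr_0$ such that $k_{\pfr_0}=K$, and an absolutely almost simple $k$-group $\bbg$ with the same absolute type as $\bbg_0$. Under the CSP for such $\bbg$, the congruence subgroup growth $c_x$ and the full subgroup growth $s_x$ have the same rate (up to the finite congruence kernel), so Theorem \ref{t:SubgroupGrowth} gives $s_x(\Lambda_0)$ between $x^{C\log x}$ and $x^{D\log x}$.

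Next I would pass from $\Lambda_0$ to $\Gamma$. Since $\Gamma$ and $\Lambda_0$ are commensurable, there is a subgroup $\Delta$ of finite index in both, and the key quantitative point is that the index $[\Lambda_0:\Delta]$ and $[\Gamma:\Delta]$, as well as the index of $\Gamma$ in a maximal lattice containing it, can be bounded polynomially in $\vol(G/\Gamma)$ — this is where I expect to use the results on maximal lattices and the bounded generation / bounded index statements implicit in Theorem \ref{t:maximal}, together with the fact that $\vol(G/\Delta)=[\Gamma:\Delta]\cdot\vol(G/\Gamma)$. Then standard subgroup-counting inequalities (if $H\le\Gamma$ with $[\Gamma:H]=m$ then $s_x(H)\le s_{xm}(\Gamma)$ and $s_x(\Gamma)\le \sum_{j\le m}\binom{m}{?}\cdots$, more usefully $s_x(\Gamma)\le [\Gamma:H]\cdot s_x(H)\cdot(\text{poly})$ and $s_x(\Gamma)\ge s_{x/[\Gamma:H]}(H)$) convert the bounds for $\Lambda_0$ into bounds for $\Gamma$ with the volume-dependent distortion of the exponent: replacing $x$ by $\vol(G/\Gamma)\cdot x$ in the base absorbs the commensurability index in the upper bound, yielding $(\vol(G/\Gamma)\cdot x)^{D\log(\vol(G/\Gamma)\cdot x)}$; and in the lower bound the worst case is that $\Gamma$ sits inside $\Lambda_0$ with index $\asymp\vol(G/\Gamma)$, which divides $\log x$ in the exponent, giving $x^{C\log x/\log\vol(G/\Gamma)}$. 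One must also check that $\log\vol(G/\Gamma)$ is bounded below by an absolute constant for $x\ge x_0$, or simply restrict attention to lattices of covolume bounded below (finitely many exceptions of smaller covolume are handled by enlarging constants).

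The main obstacle, as in the proof of Theorem \ref{t:SubgroupGrowth} itself, is obtaining the \emph{uniform} polynomial control of the commensurability indices and of the congruence kernel in terms of $\vol(G/\Gamma)$: a priori the field $k$, the $k$-group $\bbg$, the level, and the place $\pfr_0$ all vary with $\Gamma$, and one needs that the combinatorial and geometric invariants entering the bound in Theorem \ref{t:SubgroupGrowth} — the degree $[k:\bbf_q(t)]$ governing the number of places, the residue characteristic, the parahoric data — are controlled polynomially by $\vol(G/\Gamma)$. This is exactly the ``explicit dependence of the constants on a given lattice'' alluded to after Theorem \ref{t:SubgroupGrowth}, and the argument would reuse the structure-of-graded-Lie-algebra analysis and the Prasad volume formula to make every implied constant effective in $\vol(G/\Gamma)$; once that bookkeeping is in place, the passage from $\Lambda_0$ to $\Gamma$ is the routine commensurability estimate sketched above.
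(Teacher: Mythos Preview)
Your plan to deduce the corollary from Theorem~\ref{t:SubgroupGrowth} has a genuine circularity problem. The constants $C,D$ in Theorem~\ref{t:SubgroupGrowth} depend on $\Lambda_0=\bbg(\o(\pfr_0))$, and as $\Gamma$ varies over all lattices in $G$ the underlying data $(k,\pfr_0,\bbg)$ vary as well; so invoking that theorem and then ``tracking the constants through Prasad's volume formula and the graded Lie algebra analysis'' is not a deduction from the theorem but a re-proof of the uniform statement you are after. The paper is organized differently: the uniform upper bound is proved first, as Theorem~\ref{t:MaxSubgroupGrowthUpperBound}, which gives $\log s_y(\Gamma_{\max})\ll(\log y)^2$ for every \emph{maximal} lattice with $y\gg\vol(G/\Gamma_{\max})$ and implied constants depending only on $G$. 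The upper bound of the corollary then follows in one line by embedding $\Gamma$ in a maximal $\Gamma_{\max}$ and using $s_x(\Gamma)\le s_{[\Gamma_{\max}:\Gamma]\,x}(\Gamma_{\max})$ together with $[\Gamma_{\max}:\Gamma]\cdot\vol(G/\Gamma_{\max})=\vol(G/\Gamma)$.

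Your lower bound sketch also misidentifies the source of the factor $1/\log(\vol(G/\Gamma))$. It does not arise from a commensurability inequality of the form ``index $\asymp v$ divides $\log x$ in the exponent''; indeed, from $\Gamma\subseteq\Lambda_0$ with $[\Lambda_0:\Gamma]=m$ and $s_y(\Lambda_0)\ge y^{C\log y}$ one cannot extract a lower bound for $s_x(\Gamma)$ by index manipulations alone (the natural inequalities go the wrong way). The paper instead argues constructively, as in Lemma~\ref{l:LowerboundSubgroupMax}: set $\Lambda=\Gamma\cap\Lambda_{\max}$; by CSP/MP its profinite closure $\widehat{\Lambda}$ has index at most $v=\vol(G/\Gamma)$ in $\prod_{\pfr}P_{\pfr}$, and then Lemma~\ref{l:Subnormal} (together with Section~\ref{ss:ReductionProP}) produces a place $\pfr_1$ with $\deg\pfr_1=O(\log v)$ such that $P_{\pfr_1}$ is hyper-special and $P_{\pfr_1}\subseteq\widehat{\Lambda}$. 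Counting subspaces of $P_{\pfr_1}^{(n)}/P_{\pfr_1}^{(2n)}$ yields $\gg q_{\pfr_1}^{c n^2}$ subgroups of index $\le q_{\pfr_1}^{c'n}$ inside $\Lambda$, hence inside $\Gamma$; writing $x=q_{\pfr_1}^{c'n}$ gives $\log s_x(\Gamma)\gg(\log x)^2/\deg\pfr_1\gg(\log x)^2/\log v$. So the $1/\log v$ is the cost of having to go to a place of degree $O(\log v)$ to find a full hyper-special parahoric inside the closure of $\Gamma$, not a commensurability loss.
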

\noindent
 After proving Theorem~\ref{t:CountingLattices} or Corollary~\ref{c:SubgroupGrowth}, the next natural questions are about their asymptotic behavior.
\begin{question}\label{q:CountingLattices}
In the above setting, does the following limit exist?
\[
\lim_{x\rightarrow \infty} \frac{\log \rho_x(G)}{(\log x)^2}
\]
\end{question}
\noindent
In the characteristic zero case, as a result of works of D.~Goldfeld, A. Lubotzky, N. Nikolov, L. Pyber~\cite{GLP,LN}, we know the asymptotic behavior of $s_x(\Gamma)$. It is interesting to understand the exact asymptotic behavior of $s_x(\Gamma)$ in the positive characteristic.
\begin{question}\label{q:SubgroupGrowth}
In the above setting, for a given $\Gamma$ a lattice in $G$, does the following limit exist?
\[
\lim_{x\rightarrow \infty} \frac{\log s_x(\Gamma)}{(\log x)^2}
\]
\end{question}
\noindent
One can also ask what the rate of growth of $m_x(G)$ is exactly. 
\begin{question}\label{q:Maximal}
In the above setting, are there positive numbers $A$ and $B$ depending on $G$ such that 
\[
x^A\le m_x(G) \le x^B,
\]
for sufficiently large $x$?
\end{question}
\noindent
In fact our proof shows that Question~\ref{q:Maximal} has an affirmative answer if a question of de Jong and Katz~\cite{dJK} and the Weil conjecture hold. de Jong and Katz~\cite{dJK} asked if for any $q$ there is $c=c(q)$ such that the number of smooth projective curves of genus $g$ over $\f_q$ is at most $c^g$.
\\

\noindent
In order to prove Theorems~\ref{t:CountingLattices} and \ref{t:SubgroupGrowth}, we will prove the following theorem on graded Lie algebras. It essentially says as we ``unwind" a $\bbz/m\bbz$-graded perfect finite dimensional Lie algebra, it does not lose its perfectness by much.
\begin{thm}\label{t:GradedLie}
Let $m$ be a natural number and $\widehat{\gfr}=\oplus^{m-1}_{i=0} \gfr_i$ be a perfect $\bbz/m\bbz$-graded $\mathfrak{F}$-Lie algebra, 
\[\mathfrak{L}=\oplus_{i=1}^{\infty} \gfr_i\otimes t^i,\]
 where $\gfr_i=\gfr_{i({\rm mod}\h m)}$, $D$ a positive integer, and $\hfr$ an $\mathfrak{F}$-sub-algebra with finite co-dimension in $\mathfrak{L}^D$, where $\mathfrak{L}^D$ denotes the direct sum of $D$ copies of $\mathfrak{L}$. There exists a constant $C=C(\widehat{\gfr})$ depending only on $\widehat{\gfr}$, such that 
 \[
 \cdim_{\mathfrak{L}^D} [\hfr,\hfr]\le C(\cdim_{\mathfrak{L}^D} \hfr+D).
 \]
\end{thm}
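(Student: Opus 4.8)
The plan is to reduce the statement to a finite computation in the fixed finite-dimensional Lie algebra $\widehat{\gfr}$ by "unwinding" the grading and controlling how the bracket interacts with finite-codimension subalgebras. First I would observe that it suffices to treat $D=1$: a finite-codimension $\hfr\subseteq\mathfrak{L}^D$ contains $\mathfrak{h}_1\oplus\cdots\oplus\mathfrak{h}_D$ for some finite-codimension $\mathfrak{h}_j\subseteq\mathfrak{L}$ (intersect with each coordinate copy, up to finite codimension), and $[\hfr,\hfr]\supseteq\bigoplus_j[\mathfrak{h}_j,\mathfrak{h}_j]$; summing the $D$ estimates gives the extra additive $D$. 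So fix $\hfr\subseteq\mathfrak{L}$ of finite codimension $c$. The key structural fact I want is that $\hfr$ contains a large graded piece: since $\mathfrak{L}=\bigoplus_{i\ge 1}\gfr_i\otimes t^i$ is graded by $i$ and each homogeneous component is finite-dimensional, the "tail" $\mathfrak{L}_{\ge N}:=\bigoplus_{i\ge N}\gfr_i\otimes t^i$ has finite codimension $\sum_{i<N}\dim\gfr_i$ in $\mathfrak{L}$, which grows linearly in $N$; so for $N$ of size $O(c)$ (with the implied constant depending only on $\min_i\dim\gfr_i$, hence only on $\widehat{\gfr}$) we cannot have $\mathfrak{L}_{\ge N}\subseteq\hfr$ for free, but we \emph{can} find a graded subalgebra inside $\hfr$ on the tail — more precisely, I would pass from $\hfr$ to the associated graded $\mathrm{gr}(\hfr)$ with respect to the filtration by $t$-degree, which is a graded subalgebra of $\mathfrak{L}$ of the same codimension $c$, and prove the codimension bound for graded subalgebras; the inequality for $\hfr$ then follows since $\cdim[\mathrm{gr}(\hfr),\mathrm{gr}(\hfr)]\le\cdim\,\mathrm{gr}([\hfr,\hfr])=\cdim[\hfr,\hfr]$ requires care but goes the right way because $[\mathrm{gr}(\hfr),\mathrm{gr}(\hfr)]\subseteq\mathrm{gr}([\hfr,\hfr])$.

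Next, for a \emph{graded} subalgebra $\hfr=\bigoplus_{i\ge1}\hfr_i\otimes t^i$ with $\hfr_i\subseteq\gfr_i$ and $\sum_i(\dim\gfr_i-\dim\hfr_i)=c<\infty$, I want to show $\hfr_i=\gfr_i$ for all $i$ outside a set of size $O(c)$, and moreover that on a long run of consecutive "full" degrees the bracket recovers everything. Here is where perfectness of $\widehat{\gfr}$ enters: because $\widehat{\gfr}=[\widehat{\gfr},\widehat{\gfr}]$ is $\bbz/m\bbz$-graded, there is a constant $\ell=\ell(\widehat{\gfr})$ such that every homogeneous element of $\gfr_{i\bmod m}$ is a sum of brackets of homogeneous elements whose degrees (mod $m$) can be chosen to lie in any prescribed residue classes summing correctly — concretely, $\gfr_a=\sum_{b+c\equiv a}[\gfr_b,\gfr_c]$ and iterating, $\gfr_a=\sum[\gfr_{b_1},[\gfr_{b_2},\ldots]]$ with all $b_j$ in a window of length $\le\ell$. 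Lifting this to $\mathfrak{L}$: if $\hfr_j=\gfr_j$ for all $j$ in an interval of length $\ge\ell$ around $i/2$, then $(\gfr_i\otimes t^i)\subseteq[\hfr,\hfr]$. Since the "bad" degrees $\{i:\hfr_i\neq\gfr_i\}$ number at most $c$, the complement of the union of their $\ell$-neighborhoods still contains all but $O(\ell c)=O(c)$ degrees, and for every good $i$ the full component $\gfr_i\otimes t^i$ lies in $[\hfr,\hfr]$. Summing the defects $\dim\gfr_i-\dim[\hfr,\hfr]_i$ over all $i$: bounded by $(\max_i\dim\gfr_i)\cdot O(c)$, i.e. $\cdim_{\mathfrak{L}}[\hfr,\hfr]\le C\cdot c$ with $C=C(\widehat{\gfr})$.

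The main obstacle I anticipate is the passage between a general finite-codimension subalgebra $\hfr$ and its associated graded — verifying that the $t$-adic filtration behaves well enough that $\cdim[\hfr,\hfr]$ is controlled by the graded analogue, and that $\mathrm{gr}(\hfr)$ is genuinely a subalgebra of the \emph{given} $\mathfrak{L}$ (one must check the grading is the na\"ive one and no "drift" occurs). A secondary technical point is making the perfectness window $\ell=\ell(\widehat{\gfr})$ uniform: one needs that $\widehat{\gfr}$ perfect and $\bbz/m\bbz$-graded forces each $\gfr_a$ to be generated by iterated brackets of bounded depth with homogeneous factors whose residues can be steered — this is a compactness/finite-generation argument internal to $\widehat{\gfr}$, and the resulting bound depends only on $\widehat{\gfr}$ (in particular not on $m$ once $\widehat{\gfr}$ is fixed, since $m$ is built into $\widehat{\gfr}$'s grading). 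Assembling these, the additive $+D$ reappears exactly from the $D=1$ reduction, and the stated inequality $\cdim_{\mathfrak{L}^D}[\hfr,\hfr]\le C(\cdim_{\mathfrak{L}^D}\hfr+D)$ follows.
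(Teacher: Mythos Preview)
Your reduction to $D=1$ is where the argument breaks. You intersect $\hfr$ with each coordinate copy $L_j\simeq\mathfrak{L}$ to get $\hfr_j=\hfr\cap L_j$, and you want to bound $\cdim_{\mathfrak{L}^D}[\hfr,\hfr]$ by $\sum_j\cdim_{\mathfrak{L}}[\hfr_j,\hfr_j]\le C\sum_j(\cdim\hfr_j+1)$. For this to give $C(\cdim_{\mathfrak{L}^D}\hfr+D)$ you implicitly assume $\sum_j\cdim_{\mathfrak{L}}\hfr_j\approx\cdim_{\mathfrak{L}^D}\hfr$, but in fact this sum can be as large as $D\cdot\cdim_{\mathfrak{L}^D}\hfr$. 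A concrete subalgebra witnessing this: take $m=1$ so $\mathfrak{L}=\gfr\otimes t\mathfrak{F}[t]$, pick distinct nonzero $a_1,\dots,a_D\in\mathfrak{F}$, and let $\hfr=\ker\Phi$ where $\Phi:\mathfrak{L}^D\to\gfr$ is the Lie algebra homomorphism $\Phi(x_1,\dots,x_D)=\sum_j\mathrm{ev}_{a_j}(x_j)$. Then $\cdim_{\mathfrak{L}^D}\hfr=\dim\gfr$ while each $\hfr_j=\ker\mathrm{ev}_{a_j}$ has $\cdim_{\mathfrak{L}}\hfr_j=\dim\gfr$, so $\sum_j\cdim\hfr_j=D\dim\gfr$. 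Your summed estimate is then of order $D\cdot\cdim\hfr$, not $\cdim\hfr+D$, and the resulting constant depends on $D$. The same obstruction appears if you skip the reduction and run your ``bad degree'' argument directly in $\mathfrak{L}^D$: at each of the $O(c)$ bad degrees you can only bound the defect by $D\dim\gfr_i$, giving $O(cD)$ rather than $O(c+D)$.

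What the paper does instead is never reduce to $D=1$. It passes to leading terms $\hfr_n\subseteq\gfr_n^D$ exactly as you suggest, but then invokes a lemma (proved by a matrix-rank maximization trick in $M_{D,d}(\gfr)$) saying that for any subspaces $U\subseteq\gfr_0^D$, $V\subseteq\gfr_1^D$ one has
\[
\cdim_{[\gfr_0,\gfr_1]^D}[U,V]\le\dim(\gfr_1)\,\cdim_{\gfr_0^D}U+\dim[\gfr_0,\gfr_1]\,\cdim_{\gfr_1^D}V,
\]
with coefficients depending only on $\widehat{\gfr}$ and \emph{not} on $D$. This is the missing ingredient: it lets one bound the defect at each degree $n$ by a $D$-free multiple of $\sum_i(\cdim\hfr_{a(n,i)}+\cdim\hfr_{b(n,i)})$, and since each index occurs $O(m)$ times in that double sum, the total is $O(\cdim\hfr)$. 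The additive $D$ arises only from the first $2m$ degrees, which are discarded. Your perfectness observation $\gfr_n=\sum_{i=0}^{m-1}[\gfr_{a(n,i)},\gfr_{b(n,i)}]$ is exactly what the paper uses (no iterated brackets are needed), but it cannot by itself compensate for the missing $D$-uniform commutator estimate.

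A minor note: the inequality you wrote for the associated-graded passage is backwards --- from $[\mathrm{gr}(\hfr),\mathrm{gr}(\hfr)]\subseteq\mathrm{gr}([\hfr,\hfr])$ one gets $\cdim[\mathrm{gr}(\hfr),\mathrm{gr}(\hfr)]\ge\cdim[\hfr,\hfr]$, which is indeed the direction you need, but not what you wrote.
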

\noindent
Let us finish the introduction, by saying a few words on the mentioned conjectures. Let $k$ be global field and $\bbg$ a simply connected absolutely almost simple $k$-group. Weil conjectured that $\tau_k(\bbg)=1$, where $\tau_k(\bbg)$ is the Tamagawa number. We refer the reader to~\cite{We} for the exact definition of the Tamagwa number. Weil's conjecture is known to hold  when $\bbg/k$ is either an inner form of type A, any form of type B, of type C, of type D except triality forms of type ${\rm D}_4$, of type ${\rm G}_2$~\cite{We}, or a $k$-split group~\cite{Ha74}. Indeed what we need in this article is only a uniform lower bound for $\tau_k(\bbg)$.
\\

\noindent
Congruence subgroup property (CSP) essentially says that any arithmetic lattice has a subgroup of finite index such that any of its finite index subgroups is congruence and Margulis-Platonov (MP) conjecture describes structure of normal subgroups of $\bbg(k)$. For the precise statements, we refer the reader to the nice survey of these problems by Prasad and Rapinchuk~\cite{PRsur}. We should add that MP holds for $k$-isotropic groups and inner forms of type A~\cite{PRsur} and CSP holds for $k$-isotropic groups, by works of Raghunathan~\cite{Rag1,Rag2}. On the other hand, by a result of G.~Harder~\cite{Ha}, if $k$ is a global function field and $\bbg$ is anisotropic over $k$, then it is of type A. Hence, assuming $k$ is a global function field, MP is true for all absolutely almost simple $k$-groups except possibly for an anisotropic outer form of type A and CSP is true for all absolutely almost simple $k$-groups except possibly for an anisotropic form of type A.
\\

\noindent
In particular, by the above discussion, all of our results are unconditional for groups of type B, C, D (except ${\rm D}_4$) and ${\rm G}_2$.

\subsection{Outline of the argument}
In order to prove the upper bound of Theorem~\ref{t:maximal}, we essentially follow Borel-Prasad~\cite{BP}.  However here we have to provide estimates for all the finiteness results required for our quantitative statement. We first use Rohlfs' maximality criterion to get a description of maximal lattices in $G=\bbg_0(K)$. It essentially says we have to understand the following parameters:
\begin{itemize}
\item[1-] A function field $k$ and a place $\pfr_0$ over $k$, such that $k_{\pfr_0}\simeq K$.
\item[2-] A simply connected absolutely almost simple $k$-group $\bbg$, such that $\bbg\simeq\bbg_0$ over $k_{\pfr_0}$ ($K$ is identified with $k_{\pfr_0}$.)
\item[3-] A coherent family of parahoric subgroups $\{P_{\pfr}\}$ for any $\pfr\neq\pfr_0$.
\end{itemize}
Indeed as part of the criterion, we have that $\Gamma=N_G(\Lambda)$, where
\[
\Lambda=\bbg(k)\cap\prod_{\pfr\in V^{\circ}_k} P_{\pfr}.
\]
 Moreover, we can start with $\gcal$ the unique quasi-split $k$-inner form of $\bbg$ and parameterize  $\bbg$, more or less,  via elements of $H^1(k,\overline{\gcal})$, where $\overline{\gcal}$ is the adjoint form of $\gcal$. Furthermore there is a field extension $l$  of degree at most $3$ over $k$, over which $\gcal$ splits, and for a given $l$ and $k$ there is a unique quasi-split $k$-group of a given type which splits over $l$. 
 \\
 
 \noindent
 On the other hand, instead of giving coherent families of parahoric subgroups, first we will determine possible types of such families up to isomorphisms of the local Dynkin diagrams, and then give an upper bound on the number of admissible coherent families having the same type up to isomorphisms of the local Dynkin diagrams. Overall we have to estimate the number of:
\begin{itemize}
\item[1-] possible function fields $k$ and $l$ and a place $\pfr_0$ over $k$ with the above properties. (This give us a unique $\gcal$ in the above notation.)
\item[2-] possible elements of $H^1(k,\overline{\gcal})$. (This give us $\bbg$.)
\item[3-] possible coherent types, up to an isomorphism of the local Dynkin diagram at each place. (This gives us $\{P_{\pfr}\}$ a coherent family of parahoric subgroups of this given type, up to action of $\adbbg(\bba_k)$.)
\item[4-] elements of $\Cl(\overline{\bbg},\{\overline{P}_{\pfr}\})$. (Altogether we get $\Lambda$ and therefore $\Gamma$.)
\end{itemize}

\noindent
To get the lower bound of Theorem~\ref{t:maximal}, we shall appeal to results of Margulis and Prasad on the abstract isomorphisms between two lattices in a semisimple Lie group over a positive characteristic local field. 
\\

\noindent
To prove the upper bound of Theorem~\ref{t:CountingLattices}, as the number of maximal lattices is relatively small, it is enough to understand $s_x({\Gamma})$ the number of subgroups of index at most $x$ in $\Gamma$, where $\Gamma$ is a maximal lattice with covolume at most $x$. Then we will go through the following steps:
\begin{itemize}
\item[1-] Using Rohlfs' short exact sequence and previous estimates, we show that $\Gamma/\Lambda\le x^{c_1}$, where $c_1$ only depends on $G$.
\item[2-] Following Lubotzky~\cite{Lu}, we use a result of Babai, Cameron and Palfy to reduce the problem to $s_x({\prod_{\pfr\in T}P^{(1)}_{\pfr}})$, where $P^{(1)}_{\pfr}$ is the first congruence subgroup of $P_{\pfr}$ and $\deg T=\sum_{\pfr\in T}\deg \pfr\le \log x$.
\item[3-] We consider $\lcal$ the graded Lie algebra associated to the filtrations of the parahoric subgroups $\{P_{\pfr}\}_{\pfr\in T}$, similar to Lubotzky and Shalev~\cite{LSa} (also see~\cite[Section 6.2]{LS} or~\cite{ANS}) and reduce the problem to the following statement:
\[
\cdim_{\cal}[\cal,\cal]\le c_2\deg T+c_3\h\cdim_{\lcal} \cal,
\] 
where $\cal$ is an $\f_p$-subalgebra of $\lcal$, and $c_2$ and $c_3$ just depend on $\bbg_0$ and $K$.
\item[4-] Finally we deduce the above statement from Theorem~\ref{t:GradedLie}. Using a result of ~\cite{PR}, we view $\lcal\otimes_{\f_k}\F$ as the direct sum of graded algebras of certain parahoric subgroups of $\bbg(\widehat{k}_{\pfr})$, and then change these parahoric subgroups to the ``largest" parahoric subgroups, where we have a very good understanding of the graded Lie algebras and get the desired result.
\end{itemize}

\noindent
To get the lower bound, we essentially follow~\cite{Sh}. However we have to be extra careful as we are counting up to the action of $\Aut(G)$, and so we again appeal to the results of Margulis and Prasad on the abstract isomorphism of lattices in $G$.
\\

\noindent
At the end, we point out that the same arguments, not only gives us Theorem~\ref{t:SubgroupGrowth}, but also its uniform version, namely Corollary~\ref{c:SubgroupGrowth}.
\subsection{Acknowledgment.} I would like to thank Professor A. Lubotzky for introducing me to these problems. I am also very grateful for all the conversations that I had with him both from math and presentation points of view. I am in  Professor G. Prasad's debt for very helpful correspondences. Special thanks are due to Professor B. Conrad for providing the proof of Theorem~\ref{t:cohom}, on ``Shapiro's lemma" for flat cohomologies. I also thank Professor J. Ellenberg and Professor M. Belolipetsky for pointing out some minor mistakes. I also would like to thank anonymous referees for their valuable comments that have improved the quality of the initial 
manuscript.

\section{Notation, conventions and preliminaries.} 
 
\subsection{Field related notations.}\label{ss:fields}
In this paper, $k$ is a global function field. Let $V_k$ be the set of places of $k$. For any $\pfr\in V_k$, let $k_{\pfr}$ be the $\pfr$-adic completion of $k$, $\f_{\pfr}$ its residue field. Let $\bba_k$ be the ring of adeles of $k$. For any non-archimedean local field $K$, let $\widehat{K}$ be the maximal unramified extension of $K$, $\f$ residue field of $K$, and $\F$ the residue field of $\widehat{K}$. 

Let $\bbg_0$ be a simply connected, absolutely almost simple $K$-group of $K$-rank at least 2. Let $G=\bbg_0(K)$.  

For any finite set $X$, the cardinality of $X$ is denoted by either $|X|$ or $\# X$. If $H_1$ is a subgroup of $H_2$, the index of $H_1$ in $H_2$ is denoted by $[H_2:H_1]$.


\subsection{Flat and Galois cohomology.}
Let $E$ be a field and $\bbh$ an affine algebraic group-scheme over $E$. In this article, $H^1(E,\bbh)$ denotes the flat cohomology $H^1(\Spec(E),\bbh)$ and, similarly, $H^i(E,\bbh)$ denotes the $i^{th}$ flat cohomology if $\bbh$ is an abelian $E$-group-scheme. 
\\

\noindent
Let us summarize theorems on flat cohomology that will be used in the course of this article. 
\begin{thm}\label{t:cohom}
\begin{itemize}
\item[1-] If $\bbh$ is a smooth $E$-group-scheme, then the flat cohomology is canonically isomorphic to the Galois cohomology.
\item[2-] If there is a short exact sequence of abelian $E$-group schemes, then one functorially gets a long exact sequence of flat cohomologies. 
\item[3-] If $\bbh$ is an abelian $k$-group-scheme and $l$ is a finite separable extension of $k$ a global field, then naturally
\[
H^i(k_{\pfr},R_{l/k}(\bbh))\simeq \oplus_{\pfrl |\pfr} H^i(l_{\pfrl},\bbh),
\]
for any $\pfr\in V_k$ and $i\ge 1$. 
\end{itemize}
\end{thm}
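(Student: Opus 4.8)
The statement splits into three parts; I would treat (1) and (2) as formal facts about cohomology on the fppf site and put the weight on (3).

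\emph{Parts (1) and (2).} For (1), the change-of-site morphism $H^i_{\mathrm{et}}(\Spec E,\bbh)\to H^i(\Spec E,\bbh)$ is an isomorphism whenever the coefficient sheaf is represented by a smooth group scheme; this is Grothendieck's comparison theorem (``Le groupe de Brauer III''; see also Milne, \emph{\'{E}tale Cohomology}, Ch.~III), the geometric content being that a smooth morphism has sections \'{e}tale-locally, so an fppf $\bbh$-torsor is already \'{e}tale-locally trivial. Combined with the identification of the \'{e}tale cohomology of $\Spec E$ with Galois cohomology of a separable closure, this gives (1) (for $i=1$ as pointed sets when $\bbh$ is non-abelian). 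For (2), a short exact sequence $1\to\bbh'\to\bbh\to\bbh''\to1$ of abelian $E$-group schemes --- with surjectivity understood as faithful flatness of $\bbh\to\bbh''$ --- is precisely a short exact sequence of abelian sheaves on the fppf site of $\Spec E$: the kernel sheaf is $\bbh'$, and $\bbh\to\bbh''$ is a sheaf epimorphism because the base change of $\bbh\to\bbh''$ along any test scheme carrying a section of $\bbh''$ is itself an fppf cover, over which that section lifts tautologically. Since abelian fppf sheaves have enough injectives (after restricting to a small site), deriving global sections yields the functorial long exact sequence.

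\emph{Part (3): a ``Shapiro's lemma'' for Weil restriction.} Here $R_{l/k}(\bbh)$ abbreviates $R_{l/k}(\bbh\times_k l)$. Since $l/k$ is finite separable, $f\colon\Spec l\to\Spec k$ is finite \'{e}tale, and on abelian fppf sheaves $R_{l/k}$ coincides with the pushforward $f_*$. My plan rests on two ingredients. The first is a general Shapiro lemma: for any finite \'{e}tale $g\colon S'\to S$ and any abelian fppf sheaf $\mathcal F$ on $S'$ there is a natural isomorphism $H^i(S,g_*\mathcal F)\simeq H^i(S',\mathcal F)$ for all $i\ge0$; as $g_*$ is right adjoint to the exact functor $g^*$ it is left exact and preserves injectives, so once one knows $g_*$ is exact (equivalently $R^qg_*=0$ for $q\ge1$) the isomorphism follows by pushing an injective resolution of $\mathcal F$ forward. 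The second is the compatibility of Weil restriction with base change, together with $l\otimes_k k_{\pfr}\simeq\prod_{\pfrl\mid\pfr}l_{\pfrl}$ and the identity $R_{(\coprod_iT_i)/S}=\prod_iR_{T_i/S}$, giving
\[
R_{l/k}(\bbh)\times_k k_{\pfr}\;\simeq\;\prod_{\pfrl\mid\pfr}R_{l_{\pfrl}/k_{\pfr}}(\bbh\times_k l_{\pfrl}).
\]
Combining the two, and using that $H^i(k_{\pfr},-)$ is additive and hence commutes with this finite product, the general Shapiro lemma applied to each finite \'{e}tale extension $l_{\pfrl}/k_{\pfr}$ yields
\[
H^i(k_{\pfr},R_{l/k}(\bbh))\;\simeq\;\prod_{\pfrl\mid\pfr}H^i\!\big(k_{\pfr},R_{l_{\pfrl}/k_{\pfr}}(\bbh)\big)\;\simeq\;\prod_{\pfrl\mid\pfr}H^i(l_{\pfrl},\bbh),
\]
which for $i\ge1$ is the asserted finite direct sum.

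The step I expect to be the main obstacle is the exactness of $g_*=R_{l/k}$ on the fppf site. On the \'{e}tale site finite morphisms are acyclic essentially for free, but on the fppf site one must genuinely use that $g$ is finite \'{e}tale. I would prove it by checking surjectivity of $g_*\mathcal F\to g_*\mathcal F''$, for a short exact sequence $0\to\mathcal F'\to\mathcal F\to\mathcal F''\to0$ on $S'$, fppf-locally on $S$: after pulling back along a cover that splits $g$ into a disjoint union of copies of the base, the claim reduces to the triviality that finite products of sheaves are exact. Equivalently, one may argue through the adjunction, using that Weil restriction along a finite \'{e}tale morphism carries fppf covers to fppf covers. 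Finally, it is worth stressing why flat --- not \'{e}tale --- cohomology is forced here: the group schemes that must be handled include possibly non-smooth centers such as $\mu_n$ with $p\mid n$, for which the classical \'{e}tale Shapiro lemma does not apply, so the fppf version is genuinely needed; this is exactly the input required later when dealing with $H^1(k,\overline{\gcal})$ and with centers of simply connected groups in characteristic $p$.
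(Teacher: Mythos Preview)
Your proof is correct and follows essentially the same approach as the paper: parts (1) and (2) are handled by the same references and formal arguments, and for part (3) the paper (citing an argument of B.~Conrad) likewise identifies $R_{l/k}$ with the fppf pushforward along the finite \'etale map $f\colon\Spec(l\otimes_k k_{\pfr})\to\Spec(k_{\pfr})$ and then invokes the degenerate Leray spectral sequence, which is exactly your ``push forward an injective resolution'' argument once one knows $R^qf_*=0$ for $q\ge 1$. The key exactness of $f_*$ on the fppf site is justified identically in both---\'etale-locally (hence fppf-locally) the cover splits completely, reducing to exactness of finite products---so the only difference is organizational: you first base-change and then apply Shapiro to each $l_{\pfrl}/k_{\pfr}$, while the paper applies the spectral sequence directly to $f$ and reads off the direct sum from $\Spec(l\otimes_k k_{\pfr})=\coprod_{\pfrl\mid\pfr}\Spec(l_{\pfrl})$.
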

\begin{proof} (We would like to thank B.~Conrad for providing the proof of the third part.)
The first part is proved in~\cite[Chapter I\!I\!I, Theorem 2.10, Theorem 3.9, Theorem 4.7 and Theorem 4.8(a)]{Mi}. The second one is part of the delta-functor structure of cohomology. (It is worth mentioning that starting with a short exact sequence with non-commutative $E$-group-schemes, one still gets a long exact sequence involving cohomology of degree zero and one~\cite[Chapter I\!I\!I, proposition 4.5]{Mi} or~\cite{Shat}).  The following proof of the third part was provided to us by B.~Conrad. Since the Weil restriction is pushforward on fppf abelian sheaves, this isomorphism comes from the degenerate Leray spectral sequence \cite[Chapter I\!I\!I, Theorem 1.18(a)]{Mi} for fppf cohomology relative to the finite etale covering $f:\Spec(l\otimes_k k_{\pfr}) \rightarrow \Spec(k_{\pfr})$.  The degeneration is due to the fact that the Weil restriction functor $f_*$ on fppf abelian sheaves is exact (and hence has vanishing higher derived functors) since etale-locally (and hence fppf-locally) on $\Spec(k_{\pfr})$ it becomes a totally split covering, for which exactness is obvious.
\end{proof}
\section{Variations on the results of Prasad and Raghunathan.}
This section has a threefold purpose:
\begin{enumerate}
\item We recall the construction of the graded algberas associated with parahoric subgroups over either a local field or its maximal unramified extension. (We follow Prasad and Raghunathan\rq{}s treatment~\cite{PR}.)
\item Using~\cite{PR}, we make a connection between the graded algebras associated with various parahoric subgroups (see Corollary \ref{c:ChangeParahorics}).
\item As another corollary of the results of ~\cite{PR}, we give the precise structure of the graded algebra associated with the ``largest\rq\rq{} parahoric subgroup (see Corollary \ref{c:SpecialAlgebra}). 
\end{enumerate}
 As it was pointed out in the introduction, these results play a crucial role 
in finding the rate of growth of $\rho_x(G)$ and the subgroup growth of any lattice in $G$ (see Section~\ref{ss:ReductionLieAlgebra}). As the 
nature of this section is completely different from the rest of the article, reader can 
easily skip it and return to the mentioned corollaries whenever needed.


\subsection{Root system related notations.}\label{ss:roots}
Let $\bbg$ be an absolutely simple, simply connected algebraic group defined over $K$. Let $\bbt$ be a maximal $\widehat{K}$-split torus defined over $K$, such a torus exists as part of Bruhat-Tits theory~\cite{T} . Let $\widehat{\bbt}$ be the centralizer of $\bbt$ which is also a torus defined over $K$ as $\bbg$ is quasi-split over $\widehat{K}$ and $\bbt$ is defined over $K$. Let $\Phi=\Phi(\bbt)$ be the $\widehat{K}$-root system of $\bbg$ with respect to $\bbt$, $\bbb$ a Borel subgroup defined over $\widehat{K}$ containing the centralizer of $\bbt$, $\Phi^+$ the set of positive roots in $\Phi$ with respect to this ordering, $\Pi$ the basis with respect to this ordering, $\Phi^{\bullet}$ the set of non-divisible roots of $\Phi$, $\Phi^{\bullet\bullet}$ the set of non-multipliable roots in $\Phi$. Let $\widehat{L}$ be the smallest Galois extension of $\widehat{K}$ over which $\widehat{\bbt}$ splits. Galois group of $\widehat{L}$ over $\widehat{K}$ acts on $X^*(\widehat{\bbt})$ the group of characters of $\widehat{\bbt}$, and it is well-known that there are correspondences between $Gal(\widehat{L}/\widehat{K})$-orbits of $\Phi(\widehat{\bbt})$, restriction of these roots to $\bbt$, and $\Phi$. In particular, for any $\phi\in\Phi$, there is $\widehat{\phi}$ a root in $\Phi(\widehat{\bbt})$ whose restriction to $\bbt$ is $\phi$. Let $\widehat{K}\subseteq\widehat{L}_{\phi}$ be the smallest subfield of $\widehat{L}$ such that $\widehat{\phi}$ is defined over $\widehat{L}_{\phi}$. Let $r={\rm lcm}_{\phi\in\Phi}[\widehat{L}_{\phi}:\widehat{K}]$ and $r_{\phi}=r/[\widehat{L}_{\phi}:\widehat{K}]$.


\subsection{Affine functions and inner product.}\label{ss:innerproduct}
Let $V=X_*(\bbt)\otimes_{\bbz}\bbr$ and $V^*=X^*(\bbt)\otimes_{\bbz}\bbr$ be its dual. Let $\langle\h,\h\!\rangle$ be a Weyl group-invariant positive definite inner product on $V^*$ such that in case the root system $\Phi$ is reduced and contains roots of unequal lengths, any short root has length $\sqrt{2}$ (we are using the same description of inner product as in~\cite{PR}). Let $\langle\h,\h\!\rangle$ also denote its extension to $V^*\times\bbr$ the space of all affine functions on $V$ (inner product of two affine functions is the inner product of their gradient  i.e. the $V^*$-component).  


\subsection{Explicit absolute affine roots.}\label{ss:AffineRoots}
Let $\Psi\subset V^*\times\bbr$ be the set of (absolute) affine roots of $\bbg$ relative to $\bbt$. 
 If $\bbg$ splits over $\widehat{K}$, then $\Psi=\Phi\times\bbz$. If $\bbg$ does not split over $\widehat{K}$ but the $\widehat{K}$-root system $\Phi$ is reduced, then $\Psi^{\vee}=\Phi^{\vee}\times\bbz$, where $\Psi^{\vee}=\{\frac{2\psi}{\langle\psi,\psi\rangle}\h|\h\psi\in\Psi\}$ and $\Phi^{\vee}=\{\frac{2\phi}{\langle\phi,\phi\rangle}\h|\h\phi\in\Phi\}$. If the $\widehat{K}$-root system is non-reduced, then $\bbg/\widehat{K}$ is an outer form of type ${\rm A}_n$, where $n$ is even. In this case, 
\[
\Psi=\{(\phi,n)\h|\h n\in\bbz, \phi\in\Phi^{\bullet}\}\cup\{(\phi,2n+1)\h|\h n\in\bbz,\phi\not\in\Phi^{\bullet}\}.
\]
We refer the reader to~\cite[Section 2.8]{PR} for details and further discussions.

\subsection{Congruence subgroups of root groups.}\label{ss:RootGroups}
For any $\widehat{K}$-root $\phi$, we let $\mathbb{U}_{\phi}$ be the corresponding root subgroup. There is a natural filtration on the $\widehat{K}$-points of this group, coming from the discrete valuation of $\widehat{K}$. Using this filtration, for any affine function $\psi=(\phi,s)$, one can define $U_{\psi}$ (sometimes denoted by $U_{\phi,s}$) a subgroup of $\mathbb{U}_{\phi}(\widehat{K})$. We refer the reader to either~\cite[Section 2.3]{PR} or \cite[Section 2.4]{MP} for details. Let $\bbg_{\phi}$ be the group generated by $\mathbb{U}_{\phi}$ and $\mathbb{U}_{-\phi}$, $\widehat{\bbt}_{\phi}=\bbg_{\phi}\cap\widehat{\bbt}$, $\widehat{T}^{\phi}_{0}$ the maximal bounded subgroup of $\widehat{\bbt}_{\phi}(\widehat{K})$, $\widehat{T}_0$ the maximal bounded subgroup of $\widehat{\bbt}(\widehat{K})$, and for any positive integer s, $\widehat{T}^{\phi}_{s\delta}$ the congruence subgroups of $\widehat{\bbt}(\widehat{K})$, where $\delta=(0,1)$ is the constant function. Again we refer the reader to \cite[Section 2.6]{PR} for the precise description of $\widehat{T}^{\phi}_{s\delta}$. Finally let $\widehat{T}_{s\delta}$ be the group generated by $\widehat{T}^{\phi}_{s\delta}$'s for $\phi\in\Phi$.


\subsection{Iwahori subgroup and explicit absolute affine basis.}\label{ss:IwahoriBasis}
Let $\widehat{I}$ be an Iwahori subgroup of $\bbg(\widehat{K})$ which is stable under the action of the Galois group of $\widehat{K}$ over $K$, and $I=\bbg(K)\cap\widehat{I}$. By changing the Borel subgroup $\bbb$ if needed we can assume that the product mapping
\[
\prod_{\phi\in\Phi^{\bullet}\cap\Phi^+}U_{\phi,0}\times\prod_{\phi\in\Phi^{\bullet}\cap\Phi^-}U_{\phi,r'_{\phi}}\times\widehat{T}_0\rightarrow \widehat{I}
\]
is bijective for every ordering of the factors of the product, where $r'_{\phi}$ is the smallest number such that $U_{\phi,r'_\phi}$ is a proper subgroup of $U_{\phi,0}$.
\noindent
Having the Iwahori subgroup, we get an ordering on $\Psi$ the affine root system and a basis $\Delta$.  
The local index of $\bbg/K$ consists of the Dynkin diagram of $\Delta$, together with the action of the Galois group of $\widehat{K}$ over $K$. If $\bbg$ splits over $\widehat{K}$,  
\[
\Delta=\{(\alpha,0)\h|\h\alpha\in\Pi\}\cup\{(-\rho,1)\},
\]
where $\rho$ is the highest root in $\Phi$. In this case, let $\psi_s=(-\rho,1)$. If $\bbg$ does not split over $\widehat{K}$ but its $\widehat{K}$-root system is not reduced, then
\[
\Delta=\{(\alpha,0)\h|\h\alpha\in\Pi\}\cup\{(-\rho_m,1)\},
\]
where $\rho_m$ is the dominant short root in $\Phi$. In this case, let $\psi_s=(-\rho_m,1)$. If $\Phi$ is non-reduced, then 
\[
\Delta=\{(\alpha,0)\h|\h\alpha\in\Pi\}\cup\{(-2\phi_m,1)\},
\]
where $\phi_m$ is the unique multipliable root in $\Pi$. In this case, let $\psi_s=(-2\phi_m,1)$.


\subsection{Standard parahoric subgroups.}\label{ss:Parahoric}
For any $\Xi\subseteq\Delta$, let $\widehat{P}_{\Xi}$ be the associated standard parahoric, i.e. the subgroup of $\bbg(\widehat{K})$ which is generated by $\widehat{I}$ and $U_{\alpha}$ for any $\alpha \in \Delta\setminus\Xi$. If $\widehat{P}_{\Xi}$ is invariant under the action of $Gal(\widehat{K}/K)$ the Galois group, then it is said to be defined over $K$ and we denote $\bbg(K)\cap\widehat{P}_{\Xi}$ the set of its $K$-rational points by $P_{\Xi}$. This happens if and only if $\Xi$ is invariant under the action of $Gal(\widehat{K}/K)$. Let us recall that any parahoric subgroup is conjugate to one and only one of these standard parahorics (one can consider this as a definition of a parahoric subgroup). A parahoric subgroup of $\bbg(\widehat{K})$ (resp. $\bbg(K)$) is called of type $\Xi\subseteq \Delta$ if it is conjugate to $\widehat{P}_{\Xi}$ (resp. $P_{\Xi}$).

\subsection{Filtrations of parahoric subgroups.}\label{ss:Filtration}
Following notations of \cite{PR}, let $m_{\alpha}$ be the uniquely determined positive integers such that
\[
\sum_{\alpha\in\Delta}m_{\alpha}\alpha=\delta,
\]
where $\delta$ is the constant function $(0,1)$. For any $\Xi\subseteq\Delta$ and affine function $\psi$, let
\[
l_{\Xi}(\psi)=\sum_{\alpha\in\Xi} t_{\alpha},
\] 
where $\psi=\sum_{\alpha\in\Delta}t_{\alpha}\alpha.$ Using $l_{\Xi}$, we can define congruence subgroups of $\widehat{P}=\widehat{P}_{\Xi}$ (resp. $P=P_{\Xi}$). More precisely, let $\widehat{P}_t$ be the subgroup of $\widehat{P}$ generated by $U_{\psi}$ with $l_{\Xi}(\psi)\ge t$ and $\widehat{T}_{s\delta}$ where $s$ is the smallest integer greater than or equal to $t/l_{\Xi}(\delta)$. If $\Xi$ is $Gal(\widehat{K}/K)$ invariant, then we set $P_t=\widehat{P}_t\cap\bbg(K)$.


\subsection{Associated graded Lie algebras.}\label{ss:AssociatedGraded}
Let $\widehat{P}=P_{\Xi}$ be a standard parahoric subgroup of $\bbg(\widehat{K})$. For any natural number $t$, let $\L^{\Xi}_t=\widehat{P}_t/\widehat{P}_{t+1}$. When there is no ambiguity, we simply write $\L_t$. For any natural number $t$, $\L_t$ is a finite dimensional $\F$-vector space. Let
\[
\L_{\Xi}=\oplus^{\infty}_{i=1}\L^{\Xi}_i,
\]
and consider it as a graded $\F$-Lie algebra via the following definition:
\[
[g_i\widehat{P}_{i+1},g_j\widehat{P}_{j+1}]:=(g_i,g_j)\widehat{P}_{i+j+1},
\]
for any natural numbers $i$, $j$, $g_i$ in $\widehat{P}_i$ and $g_j$ in $\widehat{P}_j$, where $(g,h)=ghg^{-1}h^{-1}$. When $\Xi$ is $Gal(\widehat{K}/K)$ invariant, we also consider the described filtration of $P=P_{\Xi}$. For any natural number $t$, let $L^{\Xi}_t=P_t/P_{t+1}$. Again if there is no ambiguity, we simply write $L_t$. One can view $L_t$ as a finite subgroup of $\L_t$. Indeed in this case,
$Gal(\widehat{K}/K)=Gal(\F/\f)$ acts semi-linearly on $\L_t$. Thus we get $\L_t(\f)$ an $\f$-structure on $\L_t$ and one can show that, in fact, $\L_t(\f)$ can be identified with $L_t$. Therefore the graded $\f$-Lie algebra
\[
L_{\Xi}=\oplus^{\infty}_{i=1} L^{\Xi}_i,
\]
can be considered as an $\f$-structure on $\L_{\Xi}$. We refer the reader to \cite[Sections 2.16,2.18,2.23,2.24]{PR}. G. Prasad and  M.~S.~Raghunathan~\cite[Section 2.19,2.20]{PR} give a ``Chevalley basis" of $\L_{\Xi}$. For any absolute affine root $\psi$, they introduce an element $X_ {\psi}$, and for a given $\beta$ in $\Phi$ and a natural number $s$ which is divisible by $r_{\beta}$, they give an element $H^{\beta}_{s\delta}$. They prove that
\begin{equation}\label{e:Basis}
\{X_{\psi}|\h l_{\Xi}(\psi)\ge 1\}\cup\{H^{\alpha}_{s\delta}|\h \alpha\in\Pi, s>0, r_{\alpha}|s\}
\end{equation}
is an $\F$-vector space basis of $\L_{\Xi}$. They also show that $\L^{\Xi}_t$ is spanned by
\begin{equation}\label{e:GradeBasis}
\{X_{\psi}|\h l_{\Xi}(\psi)= t\}\cup\{H^{\alpha}_{s\delta}|\h \alpha\in\Pi,\h  r_{\alpha}|s,\h s l_{\Xi}(\delta)=t\}.
\end{equation}
Further they give the following relations between the elements of this basis:
\begin{itemize}
\item[1-] Let $\psi, \eta\in\Psi$ with $l_{\Xi}(\psi), l_{\Xi}(\eta)\ge 1$, such that $\psi+\eta$ is not a constant. Then
\[
[X_{\psi},X_{\eta}]=0\hspace{1cm}{\rm if }\h\psi+\eta\not\in\Psi.
\]
\item[2-] Let $\psi,\eta\in\Psi$ such that $\psi-\eta\in\Psi$ and $l_{\Xi}(\psi-\eta),l_{\Xi}(\eta)\ge 1$. Then
\[
[X_{\eta},X_{\psi-\eta}]=\pm nX_{\psi},
\] 
where $n$ is the largest positive integer such that $\psi-n\eta$ is in $\Psi$.
\item[3-] Let $\psi=(\alpha,s), \eta=(-\alpha,s')\in\Psi$ and $l_{\Xi}(\psi), l_{\Xi}(\eta)\ge 1$. Then
\[
[X_{\psi},X_{\eta}]=\begin{cases}
				\pm H^{\alpha}_{(s+s')\delta}  &\text{if $\alpha\in\Phi^{\bullet\bullet}$,} \\
				\pm 2H^{\alpha}_{(s+s')\delta}&\text{if $\alpha\in\Phi\setminus\Phi^{\bullet\bullet}$.}
			      \end{cases}			 
\]
Moreover, for any positive integral multiple $s''$ of $r_{\alpha}$,
\[
[H^{\alpha}_{s''\delta},X_{\psi}]= \begin{cases}
				2 X_{\psi+s''\delta}  &\text{if $\alpha\in\Phi^{\bullet\bullet}$,} \\
				(2-(-1)^{s''}) X_{\psi+s''\delta}&\text{if $\alpha\in\Phi\setminus\Phi^{\bullet\bullet}$.}
			      \end{cases}	
\]  
\item[4-] Let $\psi=(\alpha,s)\in \Psi$ and $l_{\Xi}(\psi)\ge 1$, $\beta\in\Phi$, $\beta\neq\pm\alpha$, and $r_{\beta}|s$. Then $[H^{\beta}_{s\delta},X_{\psi}]$ is an integral multiple of $X_{\psi+s\delta}$. Moreover, in case $r|{\rm char}(\f)r_{\alpha}s$, 
\[
[H^{\beta}_{s\delta},X_{\psi}]=\begin{cases}
\frac{2\langle\beta,\alpha\rangle}{\langle\beta,\beta\rangle}X_{\psi+s\delta}&\text{if $r_{\alpha}|s$ and $\beta\in\Phi^{\bullet\bullet}$,} \\
\frac{\langle\beta,\alpha\rangle}{\langle\beta,\beta\rangle}X_{\psi+s\delta}&\text{if $r_{\alpha}|s$ and $\beta\in\Phi\setminus\Phi^{\bullet\bullet}$,}\\
0&\text{if $r_{\alpha}\nmid s$.}
\end{cases}
\] 
\item[5-] For any $\beta\in\Phi$, $H^{\beta}_{s\delta}=-H^{-\beta}_{s\delta}$, and $H^{\beta}_{2s\delta}=H^{2\beta}_{2s\delta}$ if $\beta$ is a multipliable root, and all elements of the form $H^{\beta}_{s\delta}$ commute with each other.
\item[6-] Let $\beta=\sum_{\alpha\in\Pi} n_{\alpha}\alpha\in\Phi$. Then in case $r|s$,
\[
H^{\beta}_{s\delta}=\begin{cases}
\sum_{\alpha\in\Pi} \frac{n_{\alpha}\langle\alpha,\alpha\rangle}{\langle\beta,\beta\rangle}(1+\sigma(\alpha))H^{\alpha}_{s\delta}&\text{if $\beta\in\Phi^{\bullet\bullet}$},\\
\sum_{\alpha\in\Pi} \frac{n_{\alpha}\langle\alpha,\alpha\rangle}{2\langle\beta,\beta\rangle}(1+\sigma(\alpha))H^{\alpha}_{s\delta}&\text{otherwise.}
\end{cases}
\] 
where $\sigma(\alpha)$ is  the $\bbr$-valued linear functional on $V^*$ which is identically zero in case $\Phi$ is reduced; otherwise, it takes 1 at the unique multipliable root in $\Pi$ and 0 at all the other elements of $\Pi$.
\item[7-] If $r\nmid s$ and $\beta=\sum_{\alpha\in\Pi}n_{\alpha}\alpha$ is a short positive root, then
\[
H^{\beta}_{s\delta}=(-1)^{rn(\beta)}\sum_{\alpha'\in\Phi'(\beta)}H^{\alpha'}_{s\delta},
\]
where $\Phi'(\beta)=\{\alpha'\in\Pi|\h\alpha' is\h short,\h r\nmid n_{\beta}\}$ and $n(\beta)=\sum_{\alpha short}n_{\alpha}$.
\end{itemize}
\begin{remark}
In number 4, in general, the integral coefficient just depends on $\alpha$, $\beta$ and $s\h\! l_{\Xi}(\delta)\h ({\rm mod}\h r)$. 
\end{remark}
\begin{cor}[Comparison]\label{c:ChangeParahorics}
Let $\Xi\subseteq\Delta$. $\L_{\Xi}$ can be naturally embedded into $\L_{\Delta}$ as a subalgebra of codimension at most $\dim \bbg$.
\end{cor}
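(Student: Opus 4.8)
The plan is to compare $\L_\Xi$ and $\L_\Delta$ directly, using the explicit Chevalley-type bases of~\cite{PR} recorded above. The first point I would isolate is the combinatorial fact that, written in the affine basis $\Delta$, every affine root $\psi\in\Psi$ has $\psi=\sum_{\alpha\in\Delta}t_\alpha\alpha$ with either all $t_\alpha\ge 0$ or all $t_\alpha\le 0$; this is immediate from the identity $\delta=\sum_{\alpha\in\Delta}m_\alpha\alpha$ with all $m_\alpha\ge 1$ together with the explicit form of $\Psi$ from Section~\ref{ss:AffineRoots}, checked in each of the split, reduced non-split, and non-reduced cases. It follows that if $l_\Xi(\psi)=\sum_{\alpha\in\Xi}t_\alpha\ge 1$, then the all-nonpositive alternative is excluded, so all $t_\alpha\ge 0$ and $l_\Delta(\psi)=\sum_{\alpha\in\Delta}t_\alpha\ge l_\Xi(\psi)\ge 1$. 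Hence, by~(\ref{e:Basis}), the basis $\{X_\psi\mid l_\Xi(\psi)\ge 1\}\cup\{H^\alpha_{s\delta}\mid \alpha\in\Pi,\ s>0,\ r_\alpha\mid s\}$ of $\L_\Xi$ is, as a set of index symbols, a subset of the corresponding basis of $\L_\Delta$, and the Cartan parts of the two bases are indexed by exactly the same set.

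Next I would promote this inclusion of bases to an embedding of Lie algebras $\iota\colon\L_\Xi\hookrightarrow\L_\Delta$, defined on basis elements by $H^\alpha_{s\delta}\mapsto H^\alpha_{s\delta}$ and $X_\psi\mapsto\lambda_\psi X_\psi$ for suitable units $\lambda_\psi\in\F^\times$, extended $\F$-linearly; by the previous paragraph its image lies in the $\F$-span $S$ of $\{X_\psi\mid l_\Xi(\psi)\ge 1\}\cup\{H^\alpha_{s\delta}\}$ inside $\L_\Delta$. The crux is to check, via the relations 1--7 above, that for an appropriate choice of the $\lambda_\psi$ the map $\iota$ respects brackets, i.e. that the structure constants of $\L_\Delta$ on this sub-basis agree, up to the chosen units, with those of $\L_\Xi$. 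For the brackets among the $X_\psi$ (relations 1--3) and for $[H^\alpha_{s''\delta},X_\psi]$ with $\psi=(\pm\alpha,\cdot)$ (relation 3), the constants recorded in~\cite{PR} do not involve $\Xi$, so they impose only the usual Chevalley-type consistency conditions on the $\lambda_\psi$. The one $\Xi$-sensitive ingredient is relation 4: the action of a Cartan element $H^\beta_{s\delta}$ with $\beta\neq\pm\alpha$ on $X_{(\alpha,s)}$ is an integer multiple of $X_{(\alpha,s)+s\delta}$ whose value, by the remark following relation 4, depends on $\alpha$, $\beta$ and on $s\,l_\Xi(\delta)\pmod r$, a residue which changes when $\Xi$ is enlarged to $\Delta$ since $l_\Xi(\delta)<l_\Delta(\delta)$. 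I would absorb this discrepancy into the scalars $\lambda_\psi$: as $r\in\{1,2,3\}$ and the small integers occurring in relations 1--7 (the Cartan integers and the coefficients $\pm n$ of relation 2) are all invertible in $\F$ because $p\ge 5$, the required units exist and can be chosen compatibly. (When $\bbg$ splits over $\widehat K$, i.e. $r=1$, no discrepancy arises and one takes $\iota$ to be the identity on index symbols.) Being injective on a basis, $\iota$ is then an injective homomorphism, so $\L_\Xi\cong S$ is a subalgebra of $\L_\Delta$. I expect this reconciliation of structure constants to be the main obstacle; everything else is routine.

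Finally I would estimate $\cdim_{\L_\Delta}S$. By the first paragraph, $\L_\Delta/S$ has $\F$-basis the images of $\{X_\psi\mid \psi\in\Psi,\ l_\Delta(\psi)\ge 1,\ l_\Xi(\psi)=0\}$, the Cartan parts of the two bases being identical and every $X_\psi$ with $l_\Xi(\psi)\ge 1$ already lying in $S$. Assuming $\Xi\neq\emptyset$ --- otherwise $\widehat{P}_\Xi$ is not a parahoric and there is nothing to prove --- for such $\psi$ one has $t_\alpha\ge 0$ for all $\alpha$ and $t_\alpha=0$ for $\alpha\in\Xi$, so $\psi$ is a nonzero nonnegative combination of $\Delta\setminus\Xi$, a proper subset of the affine simple roots; standard Bruhat--Tits theory identifies the set of such $\psi$ with the set of roots of the reductive quotient $\overline{\mathbf M}_\Xi$ of $\widehat P_\Xi$ over the (algebraically closed) residue field, whence its cardinality is at most $\dim\overline{\mathbf M}_\Xi$. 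Since $\overline{\mathbf M}_\Xi$ is a subquotient of $\bbg$ --- the reductive quotient of the special fibre of a Bruhat--Tits smooth model --- we get $\dim\overline{\mathbf M}_\Xi\le\dim\bbg$. Therefore $\cdim_{\L_\Delta}\L_\Xi=\dim_\F(\L_\Delta/S)\le\dim\bbg$, as required.
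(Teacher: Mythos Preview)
Your overall strategy --- match the Chevalley-type bases and check the structure constants agree --- is the same as the paper's, and your codimension argument is correct (though more elaborate than needed: since $l_\Xi(\delta)>0$, for each $\phi\in\Phi$ there is at most one $n$ with $l_\Xi((\phi,n))=0$, so at most $|\Phi|<\dim\bbg$ such $\psi$, without invoking the reductive quotient).

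The gap is in your second paragraph. You identify relation~4 as $\Xi$-sensitive and propose to absorb the discrepancy into units $\lambda_\psi$, but you never verify that a compatible system of such units exists; asserting that ``the required units exist and can be chosen compatibly'' because $p\ge 5$ is not a proof, and the compatibility conditions coming simultaneously from relations 2, 3, and 4 are genuinely overdetermined in general. More to the point, no such rescaling is needed. The basis elements $X_\psi$ and $H^\alpha_{s\delta}$ are images of specific elements of the group $\bbg(\widehat K)$ (coming from the root subgroups $U_\psi$ and the torus filtration $\widehat T^\alpha_{s\delta}$), and the Lie bracket in $\L_\Xi$ is by definition the image of the \emph{group commutator}. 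The group commutator does not see $\Xi$ at all; only the grading does. Hence every structure constant --- the signs in relations 2 and 3 and the integers in relation 4 --- is inherited from the group and is literally the same number in $\L_\Xi$ and $\L_\Delta$. This is exactly the paper's one-line observation: the identity on index symbols $X_\psi\mapsto X_\psi$, $H^\alpha_{s\delta}\mapsto H^\alpha_{s\delta}$ is already a Lie algebra embedding, so all $\lambda_\psi=1$ and your compatibility problem evaporates. Your misreading of the remark after relation~4 is understandable (the phrasing ``$s\,l_\Xi(\delta)\pmod r$'' looks $\Xi$-dependent), but the quantity controlling the coefficient is really the group-theoretic parameter $s$, not the $\Xi$-degree.
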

\begin{proof}
Clearly $l_{\Xi}(\psi)\le l_{\Delta}(\psi)$ for any $\Xi\subseteq\Delta$ and $\psi\in\Psi$. Thus, by the above commutation relations, $\L_{\Xi}$ can be viewed as a subalgebra of $\L_{\Delta}$. The codimension assertions are direct consequence of the fact that there are at most $\dim \bbg$ many $\psi\in\Psi$ such that $l_{\Xi}(\psi)=0$. 
\\

\noindent
It should be also clarified that the choice of  signs in the commutative relations 2 and 3 are inherited from the group structure and so they are the same in both of the Lie algebras.
\end{proof}
\begin{remark}
First we remark that $\mathfrak{H}$, in the above argument, only lacks the first grade of $\L_{\Xi}$. Second we emphasize that this embedding is only at the level of  Lie algebras and not graded Lie algebras. Via this embedding, we change the grading, drastically. 
\end{remark}
\begin{cor}[Special graded Lie algebras]\label{c:SpecialAlgebra}
Let $\psi_s$ be as in \ref{ss:IwahoriBasis}. Then 
\begin{itemize}
\item[1-]$\L_{\{\psi_s\}}\simeq \gfr(\F)\otimes_{\F}t\h\F[t]$ if $\bbg$ splits over $\widehat{K}$,
\item[2-]$\L_{\{\psi_s\}}\simeq \oplus^{\infty}_{i=1} \gfr_{i ({\rm mod}\h r)} \otimes t^i $ if $\bbg$ does not split over $\widehat{K}$ and $\Phi$ is reduced,
\item[3-]$\L_{\{\psi_s\}}\simeq \oplus^{\infty}_{i=1} \gfr_{i ({\rm mod}\h 2)} \otimes t^i $ if $\Phi$ is non-reduced,
\end{itemize}
 where $\gfr(\F)=\gfr_0$ is the split simple Lie algebra of type $\Phi^{\bullet}$, and 
 $\widehat{\gfr}=
 \oplus^{r-1}_{i=0} \gfr_i
 $
is a perfect $\bbz/r\bbz$-graded algebra if ${\rm char}(\F)$ is bigger than all the entries of the Cartan matrix of $\Phi$.
\end{cor}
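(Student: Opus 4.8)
Here is how I would attack Corollary~\ref{c:SpecialAlgebra} (Special graded Lie algebras).

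The three isomorphisms should fall out of specializing the ``Chevalley basis'' of \cite{PR} recalled in Section~\ref{ss:AssociatedGraded} to the singleton $\Xi=\{\psi_s\}$; once that is done, the perfectness of $\widehat{\gfr}$ becomes a classical fact about Chevalley Lie algebras. \emph{Step 1 (making the grading explicit).} For $\Xi=\{\psi_s\}$, the form $l_{\{\psi_s\}}$ assigns to an affine function the coefficient of $\psi_s$ in its expansion in the affine simple basis $\Delta=\{(\alpha,0)\mid\alpha\in\Pi\}\cup\{\psi_s\}$; since each $(\alpha,0)$ has zero constant term while $\psi_s$ has constant term $1$, that coefficient is just the constant term, so $l_{\{\psi_s\}}((\phi,s))=s$ for every affine root $(\phi,s)\in\Psi$ and $l_{\{\psi_s\}}(\delta)=1$. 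Substituting into \eqref{e:GradeBasis}, for each $i\ge1$ the graded piece $\L^{\{\psi_s\}}_i$ gets the $\F$-basis
\[
\{X_{(\phi,i)}\mid(\phi,i)\in\Psi\}\cup\{H^{\alpha}_{i\delta}\mid\alpha\in\Pi,\ r_{\alpha}\mid i\},
\]
and --- using the explicit $\Psi$ of Section~\ref{ss:AffineRoots} and the fact from Section~\ref{ss:roots} that every $r_{\alpha}$ divides $r$ --- the set of roots $\phi$ and of indices $\alpha$ occurring depends on $i$ only through its residue modulo $r$.

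\emph{Step 2 (recognizing a twisted loop algebra).} Put $\gfr_0=\widehat{P}_{\{\psi_s\}}/\widehat{P}_{\{\psi_s\},1}$, the reductive quotient of the parahoric $\widehat{P}_{\{\psi_s\}}$: as this parahoric is generated by $\widehat{I}$ and the $U_{(\alpha,0)}$, $\alpha\in\Pi$, its root system is $\Pi$, so $\gfr_0$ is the split simple $\F$-Lie algebra of type $\Phi^{\bullet}$. For $1\le i\le r-1$ let $\gfr_i$ be the $\F$-span of the degree-$i$ basis vectors from Step 1, and set $\widehat{\gfr}=\oplus^{r-1}_{i=0}\gfr_i$. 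One then uses the commutation relations (1)--(7) of Section~\ref{ss:AssociatedGraded} to check: (a) with $\Xi=\{\psi_s\}$ fixed, every structure constant in (1)--(7) depends on the levels only through their residues modulo $r$ --- for (4) this is the remark at the end of Section~\ref{ss:AssociatedGraded} (since $s\,l_{\{\psi_s\}}(\delta)=s$), relations (5)--(7) are manifestly level-periodic through their $(-1)$-factors and the conditions $r\mid s$, and the coefficients in (1)--(3) are level-independent ($0$, $\pm n$, or $\pm c$ with $c\in\{1,2\}$), the signs being inherited from the group structure as in the proof of Corollary~\ref{c:ChangeParahorics}; hence the bracket of $\L_{\{\psi_s\}}$ descends to a well-defined $\bbz/r\bbz$-graded $\F$-Lie algebra structure $\gfr_i\times\gfr_j\to\gfr_{i+j}$ on $\widehat{\gfr}$; and (b) the $\F$-linear map defined on the basis of Step 1 by $X_{(\phi,i)}\mapsto X_{\phi}\otimes t^{i}$ and $H^{\alpha}_{i\delta}\mapsto H^{\alpha}\otimes t^{i}$ (the symbols on the right read in $\gfr_{i\,({\rm mod}\,r)}$) is a graded Lie algebra isomorphism $\L_{\{\psi_s\}}\longrightarrow\oplus^{\infty}_{i=1}\gfr_{i\,({\rm mod}\,r)}\otimes t^{i}$. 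When $\bbg$ splits over $\widehat{K}$, $r=1$ and $\widehat{\gfr}=\gfr_0=\gfr(\F)$, so the target is $\gfr(\F)\otimes_{\F}t\,\F[t]$ (assertion 1); when $\bbg$ is non-split with $\Phi$ reduced, $r\in\{2,3\}$ (assertion 2); when $\Phi$ is non-reduced, $r=2$ (assertion 3).

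\emph{Step 3 (perfectness).} By the identification of Step 2, $\widehat{\gfr}$ is the split simple $\F$-Lie algebra whose type is the absolute type of $\bbg$ --- ${\rm A}_{2n-1}$, ${\rm D}_{n}$, ${\rm E}_{6}$ or ${\rm D}_{4}$ in the reduced non-split cases, ${\rm A}_{2n}$ in the non-reduced case, $\Phi=\Phi^{\bullet}$ in the split case --- carrying the $\bbz/r\bbz$-grading by the order-$r$ diagram automorphism. Such an algebra is perfect whenever $\char(\F)$ exceeds every entry of the Cartan matrix of $\Phi$ (in particular $\char(\F)>2$): its Chevalley relations give $[X_{\phi},X_{-\phi}]=H_{\phi}$ and $[H_{\phi},X_{\phi}]=2X_{\phi}$ for all roots $\phi$, so every $X_{\phi}$ and $H_{\phi}$ --- hence all of $\widehat{\gfr}$ --- lies in $[\widehat{\gfr},\widehat{\gfr}]$.

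The step I expect to be the main obstacle is 2(a): verifying uniformly over all the non-split types that, once $\Xi=\{\psi_s\}$ is fixed, every coefficient in (1)--(7) --- including the $(-1)^{s''}$ and $(-1)^{r\,n(\beta)}$ factors in (3) and (7), the divisibility conditions $r_{\alpha}\mid s$, and the sign choices in (2) and (3) --- depends only on residues modulo $r$, so that the spaces $\gfr_i$ are well defined and the induced bracket $\gfr_i\times\gfr_j\to\gfr_{i+j}$ does not depend on the chosen level representatives. This is also exactly where the hypothesis on $\char(\F)$ is used: it keeps the Cartan integers $\frac{2\langle\beta,\alpha\rangle}{\langle\beta,\beta\rangle}$ of (4) from vanishing in $\F$, hence prevents any component $\gfr_{i\,({\rm mod}\,r)}$ from accidentally degenerating. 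By contrast the affine-root computation of Step 1 and the perfectness argument of Step 3 are routine.
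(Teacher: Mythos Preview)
Your approach is essentially the paper's: compute $l_{\{\psi_s\}}((\phi,n))=n$, observe that the Prasad--Raghunathan commutation relations (1)--(7) depend only on the constant parts modulo $r$ so that the graded pieces $\L^{\{\psi_s\}}_i$ assemble into a $\bbz/r\bbz$-graded algebra $\widehat{\gfr}$, and read off perfectness from those same relations under the characteristic hypothesis. Your Step~1 and Step~2(a)--(b) match the paper's argument almost verbatim; the paper too simply asserts the mod-$r$ periodicity (``all the commutation relations depend on the gradient part \dots\ and the constant parts modulo $r$'') rather than carrying out the case check you flag as the main obstacle.

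The one substantive divergence is in Step~3. You assert that $\widehat{\gfr}$ \emph{is} the split simple $\F$-Lie algebra of the absolute type of $\bbg$, graded by the diagram automorphism, and then invoke standard Chevalley relations for that algebra. This identification is correct (it is the finite-dimensional shadow of Kac's realization of twisted affine algebras), but it is not what your Step~2 gives you: Step~2 only produces $\widehat{\gfr}$ as an abstract graded algebra with a prescribed basis and structure constants. The paper avoids this extra claim entirely and argues perfectness directly from relations (3) and (4): $[H^{\alpha}_{0},X_{(\phi,i)}]$ recovers every $X_{(\phi,i)}$ (the Cartan integers are nonzero in $\F$ by hypothesis), and $[X_{(\phi,j)},X_{(-\phi,i-j)}]$ recovers the $H^{\alpha}_{i\delta}$'s. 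Your perfectness argument goes through as well if you read ``Chevalley relations'' as exactly these instances of (3)--(4), but then the absolute-type identification is an unnecessary detour that you have not justified.
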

\begin{proof}
Because of the way, we chose $\psi_s$, one can see that $l_{\{\psi_s\}}((\phi,n))=n$; in particular, $l_{\{\psi_s\}}(\delta)=1$. First assume that $\Phi$ is reduced, and let 
\[
\{x_{\phi}|\phi\in\Phi\}\cup\{h_{\alpha}|\alpha\in\Pi\}
\]
be the Chevalley basis associated to the root system $\Phi$ and $\gfr$ the corresponding Lie algebra. Then by the above commutation relations and those of the Chevalley basis, one can easily see that the map
\[
 x_{\phi}\otimes t^n \mapsto X_{(\phi,n)}\hspace{1cm} \&\hspace{1cm} h_{\alpha}\otimes t^n \mapsto H^{\alpha}_{n\delta},
\]
where $r|n$, extends to a graded Lie algebra isomorphism between $\oplus^{\infty}_{n=1}\gfr(\F)\otimes t^{rn}$ and $\oplus^{\infty}_{n=1}\L^{\{\psi_s\}}_{rn}$. In particular, when $\bbg$ splits over $\widehat{K}$, $r=1$ and
\[
\L_{\{\psi_s\}}\simeq \gfr(\F)\otimes_{\F} t\F[t].
\]
If $\Phi$ is non-reduced, then by the above commutation relations and those of the Chevalley basis, we can see that the following map 
\[
\begin{cases}
x_{\phi}\otimes t^n \mapsto X_{(\phi,n)}& \text{if $2|n$,}\\
h_{\alpha} \otimes t^n \mapsto H^{\alpha}_{n\delta}&\text{if $2|n$ and $\alpha\in\Pi\cap\Phi^{\bullet\bullet}$,}\\
h_{\alpha}\otimes t^n \mapsto 2 H^{\alpha}_{n\delta}&\text{if $2|n$ and $\alpha\in\Pi\setminus\Phi^{\bullet\bullet}$,}
\end{cases}
\]
extends to a graded Lie algebra isomorphism between $\oplus^{\infty}_{n=1}\gfr(\F)\otimes t^{2n}$ and $\oplus^{\infty}_{n=1}\L^{\{\psi_s\}}_{2n}$, where $\gfr$ is the Chevalley Lie algebra of type $\Phi^{\bullet}$ and
\[
\{x_{\phi}|\phi\in\Phi^{\bullet}\}\cup\{h_{\alpha}|\alpha\in\Pi\}
\]
is a Chevalley basis of $\gfr$. (By our assumption, ${\rm char}(\F)\neq 2$.)
\\

\noindent
We notice that all the commutation relations depend on the gradient part of the affine roots and the constant parts modulo $r$. In particular, $\L^{\{\psi_s\}}_{i}$ can be naturally identified with $\L^{\{\psi_s\}}_{j}$ if $i\equiv j \pmod r$, and, via this identification, we get a $\gfr(\F)$-module structure on all of them. Indeed, by the same argument, we get $\widehat{\gfr}$, a $\bbz/r\bbz$-graded Lie algebra, such that 
\[
\L_{\{\psi_s\}}\simeq \oplus^{\infty}_{i=1}\gfr_{i ({\rm mod}\h r)}\otimes t^i.
\]
More precisely, if $\Phi$ is reduced, then let
\[
\{x_{(\phi,i)}|\phi\h short\h root\}\cup\{h_{(\alpha,i)}| \alpha\in\Pi, \alpha\h short\h root\},
\]
be a basis of $\gfr_i$ for $1\le i\le r-1$, $\gfr_0=\gfr(\F)$,
\[
\{x_{(\phi,0)}|\phi\in\Phi\}\cup\{h_{(\alpha,0)}| \alpha\in\Pi\}
\]
a Chevalley basis of $\gfr$ of type $\Phi$, and 
\[
\overline{\Psi}=\Phi\times\{0\}\cup\bigcup^{r-1}_{i=1}\{(\phi,i)|\phi\h short\h root\}. 
\]
If $\Phi$ is non-reduced, then let
\[
\{x_{(\phi,1)}|\phi\in\Phi\}\cup\{h_{(\alpha,1)}| \alpha\in\Pi\},
\]
be a basis of $\gfr_1$, $\gfr_0=\gfr(\F)$,
\[
\{x_{(\phi,0)}|\phi\in\Phi^{\bullet}\}\cup\{h_{(\alpha,0)}| \alpha\in\Pi\}
\]
a Chevalley basis of $\gfr$ of type $\Phi^{\bullet}$, and
\[
\overline{\Psi}=\{(\phi,0)|\phi\in\Phi^{\bullet}\}\cup\Phi\times\{1\}. 
\]
Then define the commutation relations between the elements of this chosen basis, by looking at those of $\L_{\{\psi_s\}}$ and modifying the constant parts of affine roots modulo $r$. (Since $r_{\phi}$'s are either 1 or $r$, there will not be any ambiguity.)
\\

\noindent
The isomorphisms are direct results of the way we defined $\widehat{\gfr}$, and  the perfectness of $\widehat{\gfr}$ is a consequence of the commutation relations coupled with our assumption on the characteristic of $\F$.
\end{proof}


\section{Counting maximal lattices in $G$.}

\subsection{Description of maximal lattices.}~\label{s:maximal}
Let $\Gamma$ be a maximal lattice in $G=\bbg_0(K)$. By Margulis' arithmeticity~\cite{Mar} and Rohlfs' maximality criteria~\cite[Proposition 2.9]{BP}, there are a function field $k$, $\pfr_0\in V_k$, a simply connected absolutely almost simple $k$-group $\bbg$, and a family of parahoric subgroups $\{P_{\pfr}\}$ of $\bbg(k_{\pfr})$ for any $\pfr\in V^{\circ}_k=V_k\setminus\{\pfr_0\}$ such that
\begin{itemize}
\item[1-] $k_{\pfr_0}\simeq K$.
\item[2-] $ \bbg \simeq \bbg_0$ over $K$ after identifying it with $k_{\pfr_0}$.
\item[3-] $\{P_{\pfr}\}$ is a coherent family of parahoric subgroups, i.e. $\bbg(k_{\pfr_0})\cdot\prod_{\pfr\in V^{\circ}_k}P_{\pfr}$ is an open subgroup of $\bbg(\bba_k)$.
\item[4-] $\Gamma=N_G(\Lambda)$, where $\Lambda=\bbg(k)\cap\prod_{\pfr\in V^{\circ}_k}P_{\pfr}$ is a principal congruence subgroup.
\item[5-] The following is a short exact sequence:
\[
1\rightarrow \mu(k_{\pfr_0})/\mu(k) \rightarrow \Gamma/\Lambda \rightarrow \delta(\bbg(k))_{\Theta^{\circ}}\rightarrow 1,
\] 
where $\mu$ is the center of $\bbg$, $\delta$ is the boundary map in the exact sequence
\[
1\rightarrow \mu(k)\rightarrow \bbg(k)\rightarrow \adg(k) \xrightarrow{\delta} H^1(k,\mu),
\]
and $\delta(\adg(k))_{\Theta^{\circ}}$ is the subgroup of $\delta(\adg(k))$ which preserves $\Theta^{\circ}=\{\Theta_{\pfr}\}_{\pfr\in V^{\circ}_k}$ the type of parahoric subgroups $P_{\pfr}$ (See ~\cite[Section 2]{BP}).
\end{itemize}


\subsection{Covolume of a principal congruence subgroup.}\label{s:Covolume}
Here we will recall the main result of G.~Prasad from~\cite{P}. The notations are the same as in \ref{s:maximal}. 
\\

\noindent
For any $\bbg$ and $k$ as above, there is a unique quasi-split $k$-group $\gcal$ which is an inner $k$-form of $\bbg$. Let $l$ be either a degree two or a degree three extension of $k$ over which $\gcal$ splits if it is not of type ${\rm D}_4^{(6)}$. It is a Galois extension of $k$ or the unique degree three extension of $k$ in a degree 6 Galois  extension of $k$, respectively when $\bbg$ is not a $k$ form of type ${\rm D}_4^{(6)}$ or it is. Because of the uniqueness  of $\gcal$, it is determined by its absolute type, $k$ and $l$. Let 
\[
\sfr=\sfr(\gcal)=
\begin{cases}
0 &\text{ if $\gcal$ is $k$-split,}\\
\frac{1}{2}(r-1)(r+2) &\text{if $\gcal$ is an outer form of type $\A_r$ with $r$ odd,}\\ 
\frac{1}{2}r(r+3) &\text{if $\gcal$ is an outer form of type $\A_r$ with $r$ even,}\\ 
2r-1& \text{if $\gcal$ is an outer form of type ${\rm D}_r$,}\\
26 & \text{if $\gcal$ is an outer form of tyep ${\rm E}_6$.}
\end{cases}
\]
Also let 
\[
B(\gcal)=q_k^{(g_k-1)\dim \gcal} \left(\frac{q_l^{g_l-1}}{q_k^{(g_k-1)[l:k]}}\right)^{\sfr}.
\]
For any $\pfr\in V_k$, let $\pcal_{\pfr}$ be a parahoric subgroup of $\gcal(k_{\pfr})$ chosen as in ~\cite[Section 1]{P}. Let us recall that these parahoric subgroups have maximum volume among all the parahoric subgroups of $\gcal(k_{\pfr})$, (as a consequence) always are special parahoric subgroups (and hyper-special whenever possible), and $\prod_{\pfr\in V_k} \pcal_{\pfr}$ is an open compact subgroup of $\gcal(\bba_k)$.
\\

\noindent
For any $\pfr\in V_k$, Bruhat-Tits theory provides us $\bbg_{\pfr}$ and $\gcal_{\pfr}$ two smooth affine group schemes  over $\o_{\pfr}$, such that
\begin{itemize}
\item[1-] The generic fibers of $\bbg_{\pfr}$ and $\gcal_{\pfr}$ are isomorphic to $\bbg$ and $\gcal$ over $k_{\pfr}$, respectively.
\item[2-] The $\o_{\pfr}$ points of $\bbg_{\pfr}$ and $\gcal_{\pfr}$ are isomorphic with $P_{\pfr}$ and $\pcal_{\pfr}$, respectively.
\end{itemize}

\noindent
Let $\lmbb_{\pfr}$ and $\lmcal_{\pfr}$ be a fixed maximal reductive subgroups of the special fibers of $\bbg_{\pfr}$ and $\gcal_{\pfr}$, respectively. Let $P_{\pfr_0}$ be a parahoric subgroup of $\bbg(k_{\pfr_0})$ with maximum volume among all parahoric subgroups, and $\vol$ be the Haar measure on $\bbg(k_{\pfr_0})$ such that $\vol(P_{\pfr_0})=1$. Let
\[
e(\pfr)=\frac{q_{\pfr}^{(\dim \lmbb_{\pfr}+\dim \lmcal_{\pfr})/2}}{\#\lmbb_{\pfr}(\f_{\pfr})},
e_{qs}(\pfr)=\frac{q_{\pfr}^{\dim \lmcal_{\pfr}}}{\#\lmcal_{\pfr}(\f_{\pfr})},\h{\rm and}\h\h
e'(\pfr)=\frac{e(\pfr)}{e_{qs}(\pfr)}.
\] 
Whenever $P_{\pfr}$ is a hyper-special  parahoric subgroup, $e'(\pfr)=1$. So for almost every $\pfr$, we have $e'(\pfr)=1$. Let $Z(\gcal)=\prod_{\pfr\in V_k} e_{qs}(\pfr)$. It is clear that $Z(\gcal)$ is larger than 1.  Prasad's main theorem in~\cite{P} says that 

\begin{thm}[\cite{P}]\label{t:covolume}
Following the above notations,
\[
\vol(G/\Lambda)=\tau_k(\bbg)B(\gcal)Z(\gcal)\prod_{\pfr\in V_k}e'(\pfr),
\]
where $\tau_k(\bbg)$ is the Tamagawa number of $\bbg/k$.
\end{thm}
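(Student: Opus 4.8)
The plan is to carry out G.~Prasad's adelic volume computation \cite{P}. \emph{Step 1: reduce to a product of local parahoric volumes.} Since $\bbg$ is simply connected and isotropic over $k_{\pfr_0}\simeq K$, strong approximation at $\pfr_0$ shows $\bbg(k)$ is dense in $\prod^{\prime}_{\pfr\in V^{\circ}_k}\bbg(k_{\pfr})$. With the open subgroup $\Omega=\bbg(k_{\pfr_0})\times\prod_{\pfr\in V^{\circ}_k}P_{\pfr}$ of $\bbg(\bba_k)$ this gives $\bbg(\bba_k)=\bbg(k)\cdot\Omega$ and $\bbg(k)\cap\Omega=\Lambda$ (diagonally), hence $\bbg(k)\backslash\bbg(\bba_k)\simeq\Lambda\backslash\Omega$. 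Fixing the Tamagawa measure $\mu=\prod_{\pfr}\mu_{\pfr}$, so $\mu(\bbg(k)\backslash\bbg(\bba_k))=\tau_k(\bbg)$, projecting $\Omega$ to its $\pfr_0$-factor (kernel $\prod_{\pfr\in V^{\circ}_k}P_{\pfr}$, and injective on $\Lambda$) and applying Fubini yields
\[
\tau_k(\bbg)=\mu_{\pfr_0}\bigl(\bbg(k_{\pfr_0})/\Lambda\bigr)\cdot\prod_{\pfr\in V^{\circ}_k}\mu_{\pfr}(P_{\pfr}).
\]
Rescaling $\mu_{\pfr_0}$ to the measure $\vol$ with $\vol(P_{\pfr_0})=1$, this becomes
\[
\vol(G/\Lambda)=\tau_k(\bbg)\Big/\prod_{\pfr\in V_k}\mu_{\pfr}(P_{\pfr}),
\]
so it remains to evaluate $\prod_{\pfr\in V_k}\mu_{\pfr}(P_{\pfr})$.

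\emph{Step 2: decompose the Tamagawa measure and compute local volumes.} Choosing a nonzero $k$-rational invariant top-degree form $\omega$ on $\bbg$ produces local measures $\omega_{\pfr}$ together with a decomposition $\mu=q_k^{(1-g_k)\dim\bbg}\prod_{\pfr}\lambda_{\pfr}^{-1}\omega_{\pfr}$, the $\lambda_{\pfr}$ being the canonical convergence factors: the local Euler factors of the Artin-type $L$-function attached to the degrees $d_1,\dots,d_{\ell}$ of the fundamental invariants of $\bbg$ with the residual ($*$-)action of Galois. The essential point is that this $L$-function, hence each $\lambda_{\pfr}$, depends on $\bbg$ only through its quasi-split inner form $\gcal$ — which is why $\gcal$, the splitting field $l$, and the invariant $\sfr(\gcal)$ enter the answer. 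For each place, the Bruhat--Tits smooth model $\bbg_{\pfr}$ with $\bbg_{\pfr}(\o_{\pfr})=P_{\pfr}$, its smoothness, and Lang's theorem on the special fibre give
\[
\omega_{\pfr}(P_{\pfr})=\frac{\#\lmbb_{\pfr}(\f_{\pfr})}{q_{\pfr}^{\dim\lmbb_{\pfr}}},\qquad \omega_{\pfr}(\pcal_{\pfr})=\frac{\#\lmcal_{\pfr}(\f_{\pfr})}{q_{\pfr}^{\dim\lmcal_{\pfr}}}=e_{qs}(\pfr)^{-1},
\]
the unipotent radical of the special fibre cancelling; comparing the two identifies the per-place discrepancy $e'(\pfr)=e(\pfr)/e_{qs}(\pfr)$ (equal to $1$ at hyperspecial places, hence at almost all $\pfr$), up to a power of $q_{\pfr}$ which, summed over $V_k$ against the product formula and the arithmetic of $l/k$, feeds into the genus/discriminant factor.

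\emph{Step 3: evaluate the quasi-split product.} Finally one computes $\prod_{\pfr}\lambda_{\pfr}^{-1}\omega_{\pfr}(\pcal_{\pfr})$ in closed form. By the combinatorics of the local index of $\gcal/k_{\pfr}$, the order over $\f_{\pfr}$ of the reductive quotient of Prasad's chosen special (hyperspecial when possible) parahoric of $\gcal$ is a product over the relative root system twisted by the residual Galois action — precisely the product of local Euler factors of $\zeta_k$ and of $L_{l/k}$ at the arguments $d_i$, matching the $\lambda_{\pfr}$. The Euler products therefore collapse, and after inserting the normalizing constant $q_k^{(1-g_k)\dim\bbg}$ and the functional equations of $\zeta_k$ and $L_{l/k}$ one obtains $Z(\gcal)=\prod_{\pfr\in V_k}e_{qs}(\pfr)$ (a convergent product of special values, since the $d_i\ge 2$) together with $B(\gcal)=q_k^{(g_k-1)\dim\gcal}\bigl(q_l^{g_l-1}/q_k^{(g_k-1)[l:k]}\bigr)^{\sfr}$. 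Combining this with the two displays of Step 1 gives the asserted formula.

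The hard part — and essentially the content of \cite{P} — is Steps 2 and 3: correctly setting up the canonical convergence factors through the motive of $\bbg$ and proving that inner forms share it, and then carrying out the explicit combinatorial evaluation of the local volumes of the special parahorics of $\gcal$ in terms of $\zeta_k$ and $L_{l/k}$, which relies on the fine structure theory of Bruhat--Tits buildings and a careful reading of the local index with its residual Galois twist; keeping track of the ramified-place corrections $e'(\pfr)$ by comparing the reductive quotients $\lmbb_{\pfr}$ and $\lmcal_{\pfr}$ is a further, bookkeeping-type, difficulty. For the complete argument we refer to \cite{P}.
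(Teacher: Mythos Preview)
The paper does not give its own proof of this statement: it is quoted verbatim as Prasad's main theorem in~\cite{P}, and the paper simply invokes it. Your sketch is a faithful outline of Prasad's adelic computation (strong approximation to pass to a local product, Bruhat--Tits smooth models and Lang's theorem for the local volumes, and the evaluation of the quasi-split Euler product via the $L$-function attached to the motive of $\gcal$), and you correctly defer to~\cite{P} for the hard content; so there is nothing to compare and your proposal is in line with how the paper treats the result.
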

\begin{remark}
A.~Weil conjectured that, if $\bbg$ is simply connected, absolutely almost simple, $k$-group, then $\tau_k(\bbg)=1$. This conjecture is proved in most of the cases. However it is still open for some twisted forms of type $\A$ and most of the exceptional types.
\end{remark}
\begin{remark}
In the rest of this article, we will assume that Weil's conjecture holds. Indeed, what we need is only a uniform lower bound for the Tamagawa numbers. 
\end{remark}


\subsection{Lower bound on $\vol(G/\Gamma)$ and local factors.}\label{ss:LowerBound}

In this section, we will summarize and adapt some of the results of A.~Borel and G.~Prasad from ~\cite{BP}, and then give a lower bound on the covolume of $\Gamma$. Following them, let 
\[
\vare=\vare(\bbg)=\begin{cases}
2 & \text{ if $\bbg$ is of type ${\rm D}_r$ with $r$ even,}\\
1 & \text{otherwise.}
\end{cases}
\]
Let $t$ be the exponent of $\mu$, i.e. 
\[
t=t(\bbg)=\begin{cases}
r+1 & \text{if $\bbg$ is of type ${\rm A}_r$,}\\
2  & \text{if $\bbg$ is of type ${\rm B}_r,{\rm C}_r$, ${\rm D}_r$ with $r$ even, or ${\rm E}_7$,}\\
3  & \text{if $\bbg$ is of type ${\rm E}_6$,}\\
4  & \text{if $\bbg$ is of type ${\rm D}_r$ with $r$ odd,}\\
1  & \text{if $\bbg$ is of type ${\rm E}_8, {\rm F}_4$ or ${\rm G}_2$.}
\end{cases}
\]
By the definition, $\vare$ and $t$ just depend on the absolute type of $\bbg$, and so, in particular, they are completely determined by $G$. 
\\

\noindent
For any $\pfr$, $H^1(k,\mu)$ acts on $\dcal_{\pfr}$ the local Dynkin diagram of $\bbg/k_{\pfr}$. Let $\xi_{\pfr}$ be the induced homomorphism to the group of isometries of the local Dynkin diagram and $\Xi_{\pfr}$ the subgroup of the image of $\xi_{\pfr}$ which preserves $\Theta_{\pfr}$. Let $\xi^{\circ}=(\xi_{\pfr})_{\pfr\in V_k^{\circ}}$, $\xi=(\xi_{\pfr})_{\pfr\in V_k}$, $H^1(k,\mu)_{\xi^{\circ}}=\ker \xi^{\circ}$, and $H^1(k,\mu)_{\xi}=\ker \xi$. So it is clear that
\[
\# \delta(\bbg(k))_{\Theta^{\circ}}\le \#H^1(k,\mu)_{\xi^{\circ}}\cdot\prod_{\pfr\in V_k^{\circ}} \#\Xi_{\pfr} \le
 \#H^1(k,\mu)_{\xi}\cdot\prod_{\pfr\in V_k} \#\Xi_{\pfr}\cdot \#{\rm Aut}\h \dcal_{\pfr_0}. 
\]
We note that $\#{\rm Aut}\h \dcal_{\pfr_0}$ just depends on $G$. 
\\

\noindent
So, by Theorem~\ref{t:covolume} and the short exact sequence given in ~\ref{s:maximal}, we have that
\[
\vol(G/\Gamma)\ge c_4 B(\gcal)\cdot t^{-\vare} (\#H^1(k,\mu)_{\xi})^{-1} \prod_{\pfr\in V_k}\frac{ e(\pfr)}{\#\Xi_{\pfr}}.
\]
On the other hand, by virtue of~\cite[Proposition 5.1, Proposition 5.6]{BP}, (note that $H^1(k,C)_{\xi}$ in ~\cite{BP} is denoted by $H^1(k,\mu)_{\xi^{\circ}}$ in our paper) we have
\begin{prop}\label{p:H1Upper}
In the above setting, 
\[
\# H^1(k,\mu)_{\xi} \le \begin{cases}
c_5 h_k^{\vare} t^{\vare\#T(\bbg)}& \text{if $\gcal$ splits over $k$,}\\
c_5 h_l 4^{\#T(\bbg)}\cdot \frac{q_l^{2(g_l-1)}}{q_k^{2(g_k-1)[l:k]}}& \text{if $\bbg$ is of type ${\rm D}_r$ with $r$ even,}\\
c_5 h_l t^{\#T(\bbg)}& \text{otherwise,}
\end{cases}
\]
where we assume $\gcal$ does not split over $k$ in the second and the third cases, $c_5$ is a constant depending on $G$,  
\[
T(\bbg)=\{\pfr\in V_k|\h\text{ $\bbg$ splits $/ \widehat{k}_{\pfr}$ and $\bbg$ is not quasi-split $/ k_{\pfr}$}\},
\]
and $h_k$ (resp. $h_l$) is the class number of $k$ (resp. $l$). 
\end{prop}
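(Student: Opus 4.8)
This is an adaptation to function fields of the argument of Borel and Prasad in \cite[Proposition 5.1, Proposition 5.6]{BP}. Since $H^1(k,\mu)_{\xi}\subseteq H^1(k,\mu)_{\xi^{\circ}}$ (the kernel of $\xi$ imposes the extra condition at $\pfr_0$), it suffices to bound the larger group $H^1(k,\mu)_{\xi^{\circ}}$, which is exactly the object estimated in \cite{BP}; so the plan is to verify that each step of their proof survives passage to characteristic $p$. Recall the shape of $\mu=Z(\bbg)$: if $\gcal$ is $k$-split then $\mu\cong\mu_t$ except for type ${\rm D}_r$ with $r$ even, where $\mu\cong\mu_2\times\mu_2$ ($t=2$, $\vare=2$); if $\gcal$ is not $k$-split then, with $l/k$ as in \ref{s:Covolume}, $\mu$ is the twisted form $R^{(1)}_{l/k}\mu_t$ for outer forms of type ${\rm A}_r$, ${\rm D}_r$ with $r$ odd and ${\rm E}_6$, and $\mu\cong R_{l/k}\mu_2$ for outer forms of type ${\rm D}_r$ with $r$ even. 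Note that $t$ is invertible in $k$ except possibly for type ${\rm A}$ with $p\mid r+1$; in that case $\mu_t$ is non-smooth, and all cohomology below is taken in the flat topology, where the Kummer sequence $1\to\mu_t\to\mathbb{G}_m\xrightarrow{t}\mathbb{G}_m\to 1$ remains exact and Theorem~\ref{t:cohom} supplies the long exact sequences and Shapiro's lemma to be used.

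\emph{Step 1: localization.} At every place $\pfr$ for which $\bbg$ is quasi-split over $k_{\pfr}$ and $\mu$ is unramified, the local map $\xi_{\pfr}\colon H^1(k_{\pfr},\mu)\to\Aut(\dcal_{\pfr})$ is trivial on the ``unit part'' $H^1(\o_{\pfr},\mu)$; this is a Bruhat--Tits computation at the (hyperspecial) local level, cf. \cite[Section 2]{BP}. Hence a class in $H^1(k,\mu)_{\xi^{\circ}}$ is unramified outside the finite set $S$ consisting of $\pfr_0$, the places in $T(\bbg)$, the places where $\bbg$ is not quasi-split over $k_{\pfr}$, and the finitely many places ramifying in $l/k$; moreover it satisfies the local conditions imposed by the $\xi_{\pfr}$ for $\pfr\in S$. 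In particular $H^1(k,\mu)_{\xi^{\circ}}$ is finite. When $\gcal$ is $k$-split one checks that $\bbg$ is automatically split over $\widehat{k}_{\pfr}$ for every $\pfr$ (every group over $\widehat{k}_{\pfr}$ is quasi-split, and a quasi-split inner form of a split group is split), so $l$ does not enter, the set of places where $\bbg$ fails to be quasi-split over $k_{\pfr}$ equals $T(\bbg)$, and $S=\{\pfr_0\}\cup T(\bbg)$; thus $\#S-1\le\#T(\bbg)$.

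\emph{Step 2: counting.} For a single $\mu_t$, the Kummer sequence over the Dedekind ring $\o_S$ of $S$-integers gives $0\to\o_S^{*}/(\o_S^{*})^{t}\to H^1(\o_S,\mu_t)\to\Cl(\o_S)[t]\to 0$; the function field unit theorem gives $\#\big(\o_S^{*}/(\o_S^{*})^{t}\big)\le t^{\#S}$, and finiteness of the class number gives $\#\Cl(\o_S)[t]\le\#\Cl(\o_S)\le h_k$, while the local conditions at the places of $S\setminus\{\pfr_0\}$ cut out at worst a subgroup of index or order bounded in terms of the absolute type (for the places in $T(\bbg)$ and those where $\bbg$ is not quasi-split) and in terms of the ramification of $l/k$ (for the places ramifying in $l/k$, the latter being absorbed as in \cite{BP}). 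Combining these, and squaring in the ${\rm D}_r$-even case since $\mu\cong\mu_2\times\mu_2$, yields the bound $c_5 h_k^{\vare}t^{\vare\#T(\bbg)}$ when $\gcal$ is $k$-split. When $\gcal$ is not $k$-split, Shapiro's lemma for flat cohomology (Theorem~\ref{t:cohom}(3)) identifies $H^1(k,R_{l/k}\mu_t)$ with $H^1(l,\mu_t)$, and the norm sequence $1\to R^{(1)}_{l/k}\mu_t\to R_{l/k}\mu_t\xrightarrow{N}\mu_t\to 1$ exhibits $H^1(k,R^{(1)}_{l/k}\mu_t)$ as a subquotient of $H^1(l,\mu_t)$; running the count over $l$ replaces $h_k$ by $h_l$ and $q_k$ by $q_l$, and the local factors give $t^{\#T(\bbg)}$, hence $c_5 h_l t^{\#T(\bbg)}$ in the generic non-split case. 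For outer forms of type ${\rm D}_r$ with $r$ even, $\mu\cong R_{l/k}\mu_2$ is the \emph{full} Weil restriction, so the relevant $S$-unit group is $\o_{l,S_l}^{*}/(\o_{l,S_l}^{*})^2$, where $S_l$ is the set of places of $l$ above $S$; comparing its size with the analogous group over $k$, via the product formula and the functional equations of $\zeta_l$ and $\zeta_k$, produces exactly the factor $q_l^{2(g_l-1)}/q_k^{2(g_k-1)[l:k]}$, which accounts for the second case. Absorbing the type-dependent constants into $c_5$ gives the three stated inequalities.

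The main obstacle is the non-smooth case $p\mid t$, i.e.\ type ${\rm A}_r$ with $p\mid r+1$: here Galois cohomology is not available and one must run the whole argument --- long exact sequences, Shapiro's lemma, and the identification of $H^1(\o_S,\mu_t)$ with an $S$-integer Kummer group --- in the flat topology, which is precisely the role of Theorem~\ref{t:cohom}. A second delicate point, needed so that the local contributions at the finitely many bad places may be absorbed into $c_5$ (or into $t^{\#T(\bbg)}$), is the uniform control of $\xi_{\pfr}\big(H^1(k_{\pfr},\mu)\big)$ and of the local defect of the unit subgroup, which rests on the Bruhat--Tits description of $\dcal_{\pfr}$ and of the action $\xi_{\pfr}$, exactly as in \cite[Section 5]{BP}.
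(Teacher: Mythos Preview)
The paper does not give a proof of this proposition at all; it simply records it as a direct consequence of \cite[Proposition~5.1, Proposition~5.6]{BP}, noting only the change of notation ($H^1(k,C)_{\xi}$ there is $H^1(k,\mu)_{\xi^{\circ}}$ here). Your write-up is therefore not a competing proof but an attempt to unpack that citation, and the overall architecture you describe---identify $\mu$ case by case, pass to the larger group $H^1(k,\mu)_{\xi^{\circ}}$, use Kummer theory over an $S$-integer ring together with the local description of $\ker\xi_{\pfr}$, and replace Galois by flat cohomology when $p\mid t$---is exactly the Borel--Prasad argument, so you are on the right track.

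One point is misattributed. In the outer ${\rm D}_r$ ($r$ even) case, the extra factor $q_l^{2(g_l-1)}/q_k^{2(g_k-1)[l:k]}$ does not come from ``the product formula and the functional equations of $\zeta_l$ and $\zeta_k$''. In \cite[Proposition~5.1]{BP} the corresponding bound for $^2{\rm D}_{2m}$ carries the norm of the relative discriminant $N(\mathfrak{D}_{l/k})$; in the function-field setting this is converted to genera by Riemann--Hurwitz, since (for $q_l=q_k$) one has $2(g_l-1)-2[l:k](g_k-1)=\deg\mathfrak{D}_{l/k}$, whence $q_l^{2(g_l-1)}/q_k^{2(g_k-1)[l:k]}=N(\mathfrak{D}_{l/k})$. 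So the factor arises from the ramification of $l/k$, not from analytic input. Relatedly, in your Step~1 you cannot absorb the places ramified in $l/k$ into a constant $c_5$ depending only on $G$: their number is not bounded in terms of $G$, and it is precisely their aggregate contribution that produces this discriminant factor in the ${\rm D}_r$-even case (and is already subsumed in $h_l$ in the $R^{(1)}_{l/k}\mu_t$ cases). With that correction, your sketch matches the cited argument.
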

\noindent
From the above discussions, we have  
 \begin{prop}\label{p:CovolumeLowerBound}
 In the above setting,
 \[
\vol(G/\Gamma)\ge c_6\cdot q_k^{(g_k-1)\dim \gcal}\cdot h_l^{-\vare'}\cdot \left(\frac{q_l^{(g_l-1)}}{q_k^{(g_k-1)[l:k]}}\right)^{\sfr'(\gcal)}\prod_{\pfr\in V_k} f(\pfr),
 \]
 where $c_6$ just depends on $G$, $\vare'=1$ (resp. $\vare$) if $l=k$ (resp. otherwise), $\sfr'(\gcal)=\sfr(\gcal)-2$ (resp. $\sfr(\gcal)$) if $G$ is of type ${\rm D}_r$ with $r$ even (resp. otherwise), and 
\[
 f(\pfr)=
 \begin{cases}
  e(\pfr)/\#\Xi_{\pfr} & \text{if $\pfr\not\in T(\bbg)$},\\
  &\\
e(\pfr)/(\#\Xi_{\pfr}\cdot t^{\vare}) & \text{if $\pfr\in T(\bbg)$}.
 \end{cases}
 \]
 \end{prop}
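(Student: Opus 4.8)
The plan is to substitute, into the inequality
\[
\vol(G/\Gamma)\ge c_4\, B(\gcal)\, t^{-\vare}\,(\#H^1(k,\mu)_{\xi})^{-1}\prod_{\pfr\in V_k}\frac{e(\pfr)}{\#\Xi_{\pfr}}
\]
obtained just before Proposition~\ref{p:H1Upper} from Theorem~\ref{t:covolume} and the short exact sequence of Section~\ref{s:maximal}, both the explicit formula for $B(\gcal)$ and the upper bounds of Proposition~\ref{p:H1Upper}, and then to collect terms so that the answer takes the asserted shape. The only preliminary step is a bookkeeping identity for the Euler product: by the very definition of $f(\pfr)$ one has $e(\pfr)/\#\Xi_{\pfr}=f(\pfr)$ off $T(\bbg)$ and $e(\pfr)/\#\Xi_{\pfr}=t^{\vare}f(\pfr)$ on $T(\bbg)$, hence
\[
\prod_{\pfr\in V_k}\frac{e(\pfr)}{\#\Xi_{\pfr}}=t^{\vare\cdot\#T(\bbg)}\prod_{\pfr\in V_k}f(\pfr),
\]
which is meaningful since $\bbg$ is unramified (so quasi-split) over $k_{\pfr}$ for all but finitely many $\pfr$, i.e.\ $\#T(\bbg)<\infty$.

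With this in hand I would split into the three cases of Proposition~\ref{p:H1Upper}. In each case the factor $t^{\vare\#T(\bbg)}$ coming from the bound on $\#H^1(k,\mu)_{\xi}$ (for type ${\rm D}_r$ with $r$ even it enters in the guise $4^{\#T(\bbg)}=t^{\vare\#T(\bbg)}$) cancels against the $t^{\vare\#T(\bbg)}$ just produced, so no $T(\bbg)$-dependence survives beyond what is packaged into $\prod f(\pfr)$. When $\gcal$ is $k$-split one has $l=k$ and $\sfr(\gcal)=0$, so $B(\gcal)=q_k^{(g_k-1)\dim\gcal}$ and the $q_l/q_k$-factor in the target is trivial; in the two non-split cases the factor $q_l^{g_l-1}/q_k^{(g_k-1)[l:k]}$ inside $B(\gcal)$ carries the exponent $\sfr(\gcal)$, and in the ${\rm D}_r$-even case it is further multiplied by $(q_l^{g_l-1}/q_k^{(g_k-1)[l:k]})^{-2}$ coming from the inverse of the bound on $\#H^1(k,\mu)_{\xi}$, which is precisely why the exponent drops to $\sfr(\gcal)-2=\sfr'(\gcal)$ there (and stays $\sfr(\gcal)=\sfr'(\gcal)$ otherwise). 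The surviving power of the class number is whatever the chosen case of Proposition~\ref{p:H1Upper} contributes, and this matches the definition of $\vare'$. Finally, every residual multiplicative constant — $c_4$, $c_5^{-1}$, $t^{-\vare}$, $\#\Aut\,\dcal_{\pfr_0}$, and so on — depends only on the absolute type of $\bbg$, hence only on $G$, and is absorbed into $c_6$.

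There is no conceptual obstacle here; the one place that genuinely needs care is the bookkeeping itself — keeping the exponents of $q_k$, $q_l$ and $h_l$ in step across the three cases of Proposition~\ref{p:H1Upper}, singling out type ${\rm D}_r$ with $r$ even as the case in which $\sfr(\gcal)$ is lowered by $2$, and checking that the $t^{\vare\#T(\bbg)}$ factors cancel rather than pile up. Everything else is the formula of Theorem~\ref{t:covolume} and the elementary reductions of Section~\ref{ss:LowerBound} together with Proposition~\ref{p:H1Upper}, reassembled.
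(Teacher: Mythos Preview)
Your proposal is correct and is exactly the argument the paper has in mind: the paper's own proof is the single line ``From the above discussions, we have'', and what you have written is precisely the bookkeeping that this line suppresses --- substitute $B(\gcal)$ and the three cases of Proposition~\ref{p:H1Upper} into the displayed inequality preceding it, use the identity $\prod_{\pfr}e(\pfr)/\#\Xi_{\pfr}=t^{\vare\#T(\bbg)}\prod_{\pfr}f(\pfr)$ to cancel the $t^{\vare\#T(\bbg)}$ (noting $4^{\#T(\bbg)}=t^{\vare\#T(\bbg)}$ in the ${\rm D}_r$, $r$ even, case), and absorb the leftover constants into $c_6$. One small point worth making explicit: in the non-split cases Proposition~\ref{p:H1Upper} contributes $h_l$ to the first power, while the stated exponent is $\vare'=\vare$; this is harmless for a lower bound since $h_l\ge 1$ gives $h_l^{-1}\ge h_l^{-\vare}$, but it is a weakening rather than an exact match, so your phrase ``matches the definition of $\vare'$'' should be read as ``is at least as strong as''.
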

\noindent
 Here, we would like to give a lower bound for the local factors associated to the ``bad" primes.
 \begin{prop}\label{p:LocalFactors}
 There is a positive constant $\sigma=\sigma(G)$ such that
 \[
 f(\pfr)\ge q_{\pfr}^{\sigma}
 \]
 if either $\bbg$ is not quasi-split over $k_{\pfr}$, $P_{\pfr}$ is not special, or $\bbg$ is quasi-split over $k_{\pfr}$, $\bbg$ splits over $\widehat{k}_{\pfr}$ and $P_{\pfr}$ is not hyper-special.
 \end{prop}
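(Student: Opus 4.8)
The plan is to reduce the estimate to a counting statement about the reductive quotients $\lmbb_{\pfr}$ and $\lmcal_{\pfr}$ of the special fibers, and then to exhibit in each of the three listed ``bad'' situations a concrete discrepancy that forces $f(\pfr)$ to grow at least like a fixed power of $q_{\pfr}$. Recall that
\[
f(\pfr)=\frac{e(\pfr)}{\#\Xi_{\pfr}}=\frac{q_{\pfr}^{(\dim\lmbb_{\pfr}+\dim\lmcal_{\pfr})/2}}{\#\lmbb_{\pfr}(\f_{\pfr})\cdot\#\Xi_{\pfr}}
\]
(up to the harmless extra factor $t^{-\vare}$ when $\pfr\in T(\bbg)$, which only changes the constant $\sigma$). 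Using the Lang--Weil/Steinberg-type estimate $\#\bf M(\f_{\pfr})=q_{\pfr}^{\dim \bf M}\prod_{i}(1-q_{\pfr}^{-d_i})\cdot(\text{twisting factor})$ for a connected reductive $\f_{\pfr}$-group $\bf M$ of rank $\ell$ with invariant degrees $d_1,\dots,d_\ell$, one has
\[
\frac{q_{\pfr}^{\dim\lmcal_{\pfr}}}{\#\lmcal_{\pfr}(\f_{\pfr})}=e_{qs}(\pfr)\ge 1
\]
always, and more importantly $e_{qs}(\pfr)=1+O(q_{\pfr}^{-1})$ is bounded, so $f(\pfr)\asymp q_{\pfr}^{(\dim\lmcal_{\pfr}-\dim\lmbb_{\pfr})/2}/(e_{qs}(\pfr)\cdot\#\Xi_{\pfr})$, and $\#\Xi_{\pfr}$ is bounded by a constant depending only on the absolute type (hence on $G$). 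Therefore it suffices to produce a constant $\sigma'=\sigma'(G)>0$ with
\[
\frac{\dim\lmcal_{\pfr}-\dim\lmbb_{\pfr}}{2}\ge \sigma'
\]
in each of the three cases, together with a uniform lower bound of the form $f(\pfr)\ge c\, q_{\pfr}^{\sigma'}$ coming from the residue-field contribution; shrinking $\sigma'$ and absorbing the constant $c$ absorbs the finitely many small residue characteristics.

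First I would dispose of the case where $P_{\pfr}$ is a special but not hyper-special parahoric of a group that is quasi-split over $k_{\pfr}$ and split over $\widehat{k}_{\pfr}$. Here $\gcal_{\pfr}$ may be taken with $\lmcal_{\pfr}$ of maximal dimension among parahorics (a hyper-special model exists since $\bbg=\gcal$ is split over $\widehat{k}_{\pfr}$ and unramified), so $\dim\lmcal_{\pfr}=\dim\bbg$, while $P_{\pfr}$ special non-hyper-special forces $\lmbb_{\pfr}$ to be a proper reductive subgroup: its semisimple rank equals $\ell$ but its root system is a proper closed subsystem of the affine diagram's complement, so $\dim\lmbb_{\pfr}\le\dim\bbg-2$ (removing at least one node of a non-trivial type drops the dimension by at least the length of the shortest root string, i.e.\ by at least $2$). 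This yields $(\dim\lmcal_{\pfr}-\dim\lmbb_{\pfr})/2\ge 1$. Next, when $\bbg$ is not quasi-split over $k_{\pfr}$, the quasi-split inner form $\gcal$ has strictly larger rank-normalized point count: by Bruhat--Tits and the Borel--Prasad comparison, $\dim\lmcal_{\pfr}>\dim\lmbb_{\pfr}$, and the gap is bounded below in terms of the absolute type because the relative local Dynkin diagram of a non-quasi-split form is strictly smaller than that of $\gcal$; one reads off a universal lower bound $\ge \tfrac12$ from the finite list of possible local indices (Tits' tables), so again $(\dim\lmcal_{\pfr}-\dim\lmbb_{\pfr})/2\ge\sigma'$. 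Finally, if $\bbg$ is quasi-split over $k_{\pfr}$ but $P_{\pfr}$ is not special, then even though $\lmcal_{\pfr}$ is special (maximal), $\lmbb_{\pfr}$ corresponds to a non-special vertex, whose reductive quotient again has dimension at most $\dim\bbg-2$ (a non-special facet omits a node from the affine diagram away from the absolutely special orbit, dropping the rank-normalized dimension); here one must be slightly careful to combine this with the $e_{qs}(\pfr)$ factor, but the same inequality $(\dim\lmcal_{\pfr}-\dim\lmbb_{\pfr})/2\ge 1$ holds.

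The main obstacle I anticipate is making the phrase ``the dimension drops by at least $2$'' genuinely uniform: one needs that across all absolute types, all residue characteristics $p\ge5$, and all the relevant parahoric types, the difference $\dim\lmcal_{\pfr}-\dim\lmbb_{\pfr}$ (equivalently, the difference of the sums of positive-root counts of the associated reductive groups) is bounded below by a positive constant, and that the bounded ``error'' factors $e_{qs}(\pfr)$ and $\#\Xi_{\pfr}$ never conspire to cancel this gain. This is handled by noting that $e_{qs}(\pfr)=\#\lmcal_{\pfr}(\f_{\pfr})^{-1}q_{\pfr}^{\dim\lmcal_{\pfr}}$ lies in a bounded interval $[1,c_{qs}(G)]$ (Steinberg's formula), that $\#\Xi_{\pfr}\le\#\Aut(\dcal_{\pfr})$ is bounded by a constant of $G$, and that the finitely many remaining cases with $q_{\pfr}$ small can be absorbed by choosing $\sigma$ smaller and the implicit constant appropriately; everything then follows from the inequality $f(\pfr)\ge c(G)\,q_{\pfr}^{(\dim\lmcal_{\pfr}-\dim\lmbb_{\pfr})/2-\epsilon}$ for any fixed small $\epsilon$ and the uniform positive lower bound on the exponent. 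Setting $\sigma=\sigma(G)$ equal to, say, one third of the minimum of these exponents over the finite list of configurations completes the proof.
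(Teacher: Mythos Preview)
Your approach differs from the paper's. The paper does not reason via the dimension gap $\dim\lmcal_{\pfr}-\dim\lmbb_{\pfr}$; instead it imports two black boxes: the Borel--Prasad inequality $(\#\Xi_{\pfr})^{-1}e(\pfr)\ge e_m(\pfr)$ in the non-quasi-split case (reducing to a parahoric of maximum volume), and Prasad's explicit bound $e(\pfr)\ge q_{\pfr}^{r_{\pfr}+1}/(q_{\pfr}+1)$ from \cite[Proposition 2.10]{P} whenever $P_{\pfr}$ is not of the type of $\pcal_{\pfr}$. Combined with the trivial bound $\#\Xi_{\pfr}\le t^{\vare}$ (or $\le 2$ in the residually split cases, read off from Tits' tables), this gives $f(\pfr)\ge q_{\pfr}^{r_{\pfr}+1}/((q_{\pfr}+1)t^{\vare})$ directly, which is already of the form $q_{\pfr}^{\sigma}$ since $r_{\pfr}\ge 1$ and $q_{\pfr}\ge p\ge 5$.

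Your argument has a genuine gap at the absorption step. You arrive at $f(\pfr)\ge c(G)\,q_{\pfr}^{\sigma'}$ and then say the finitely many small-$q_{\pfr}$ cases ``can be absorbed by choosing $\sigma$ smaller and the implicit constant appropriately.'' But the proposition asserts $f(\pfr)\ge q_{\pfr}^{\sigma}$ with \emph{no} multiplicative constant; there is nothing to absorb into. If your constant $c(G)$ (which collects the Steinberg correction $\prod(1\pm q_{\pfr}^{-d_i})$, the bound on $\#\Xi_{\pfr}$, and the $t^{\vare}$ factor) happens to be less than $1$, then for $q_{\pfr}$ below a threshold you get nothing. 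To close this you would need to know separately that $f(\pfr)>1$ in every bad configuration, and that is precisely the content of appendix C of \cite{BP} that the paper invokes at the very start. Your dimension-gap heuristics (``removing a node drops the dimension by at least $2$'', ``read off a universal $\ge\tfrac12$ from Tits' tables'') are also not proofs as stated; for special non-hyperspecial vertices in quasi-split unramified groups the comparison of $\dim\lmcal_{\pfr}$ and $\dim\lmbb_{\pfr}$ is exactly the computation encoded in Prasad's Proposition 2.10, and one does not avoid it by asserting the outcome.
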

 \begin{proof}
 We will follow the proof given in the appendix C of~\cite{BP}, where Borel and Prasad essentially show that $f(\pfr)>1$, for any $\pfr$. 
 \\
 
 \noindent
 We start with the case where $\bbg$ is not quasi-split over $k_{\pfr}$.  For any such $\pfr$, let $P_{\pfr}^m$ be a parahoric subgroup of $\bbg(k_{\pfr})$ of maximum volume such that $P_{\pfr}\cap P_{\pfr}^m$ contains an Iwahori subgroup and whenever $P_{\pfr}$ is such a parahoric subgroup, we set them equal; then for almost all $\pfr$, $P_{\pfr}=P_{\pfr}^m$. It is proved in~\cite[Section 3]{BP} that 
 \[
(\#\Xi_{\pfr})^{-1} e(\pfr)\ge e_m(\pfr).
\]
 By~\cite[proposition 2.10]{P}, for any such $\pfr$, 
 \[
 e_m(\pfr)\ge \frac{q_{\pfr}^{r_{\pfr}+1}}{q_{\pfr}+1},
 \]
  where $r_{\pfr}$ is the $\widehat{k}_{\pfr}$-$\rank$ of $\gcal$.  Now as it has been pointed out in~\cite[Section 4.3]{Be}, the same proof as in the mentioned appendix gives us the claimed $\sigma$.
 \\
 
 \noindent
 Now, assume that $\bbg$ is quasi-split over $k_{\pfr}$, it is split over $\widehat{k}_{\pfr}$, and $P_{\pfr}$ is not hyper-special; then, by~\cite[proposition 2.10]{P}, 
 \[
 e(\pfr)\ge \frac{q_{\pfr}^{r_{\pfr}+1}}{q_{\pfr}+1}.
 \]
 Since $\bbg$ splits over $\widehat{k}_{\pfr}$ and  $\# \Xi_{\pfr} \le t^{\vare}$, the same argument works.\\

\noindent
Finally assume that $\bbg$ is quasi-split over $k_{\pfr}$ and not split over $\widehat{k}_{\pfr}$, and $P_{\pfr}$ is not special. In this case,  $\bbg/k_{\pfr}$ is a residually split group. By looking at the table of such groups in~\cite{T}, we see that local Dynkin diagrams of only two of them have a non-trivial automorphism, for both of which we have $\#\Xi_{\pfr}\le 2$ and $r_{\pfr}\ge 2$. On the other hand, again by \cite[proposition 2.10]{P}, we have
 \begin{equation}\label{e:inequality}
 e(\pfr)\ge \frac{q_{\pfr}^{r_{\pfr}+1}}{q_{\pfr}+1}.
 \end{equation}
 So either $\Xi_{\pfr}$ has a non-trivial element,  in which case by the above argument and a similar reasoning as in the previous cases we get the desired $\sigma$, or not, in which case getting $\sigma$ is a straightforward conclusion of~(\ref{e:inequality}).
 \end{proof}
 \begin{definition}\label{d:Ramified}
 Let $\mathfrak{R}(\Gamma):=T_c\cup T_l$ be the set of {\em ramified primes} of $\Gamma$, where $T_c$ and $T_l$ are defined as follows:
 \begin{itemize}
 \item[1-](Ramified at the level of commensurability) This is the set of $\pfr\in V_k$ such that $\bbg$ is not quasi-split over $k_{\pfr}$. We shall denote it by $T_c$. 
 \item[2-](Ramified at the local level) This set consists of either $\pfr\in V_k$ which is ramified over $l$ (alternatively $\gcal$ does not split over $k_{\pfr}$) and $P_{\pfr}$ is not special, or $\pfr\in V_k\setminus T_c$ and is not ramified over $l$ and $P_{\pfr}$ is not hyper-special. We shall denote it by $T_l$. 
 \end{itemize}
 \end{definition}
 

 \subsection{The main inequality.}
 Here we will use Riemann hypothesis for curves over finite fields proved by A.~Weil to estimate $h_l$ and couple it with an estimate of $g_k$ in terms of $g_l$ to prove the following inequality. 
\begin{thm}[Main Inequality]\label{t:MainInequality}
In the above setting, there are positive numbers $c, \sigma, \sigma_1$ and $\sigma_2$ depending only on $G$ such that
\[
\vol(G/\Gamma)\ge c\cdot q_k^{\sigma_1 g_k+\sigma_2 g_l}\cdot \prod_{\pfr\in \mathfrak{R}(\Gamma)}q_{\pfr}^{\sigma}.
\]
\end{thm}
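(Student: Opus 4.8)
The plan is to combine Proposition~\ref{p:CovolumeLowerBound} with two estimates: (i) an arithmetic-geometry bound relating $g_k$, $g_l$, $q_k$, $q_l$ via the Riemann hypothesis for curves; and (ii) the local bound of Proposition~\ref{p:LocalFactors}, which already gives $f(\pfr)\ge q_{\pfr}^{\sigma}$ precisely at the primes in $\mathfrak{R}(\Gamma)=T_c\cup T_l$ (note that, by the definition of $\mathfrak{R}(\Gamma)$, every such $\pfr$ falls into one of the three cases covered by that proposition). Since $f(\pfr)>1$ for all remaining $\pfr$ by the Borel--Prasad analysis in Appendix C of~\cite{BP}, we may simply drop those factors and retain only $\prod_{\pfr\in\mathfrak{R}(\Gamma)}q_{\pfr}^{\sigma}$. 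Thus from Proposition~\ref{p:CovolumeLowerBound} we already get
\[
\vol(G/\Gamma)\ge c_6\cdot q_k^{(g_k-1)\dim\gcal}\cdot h_l^{-\vare'}\cdot\left(\frac{q_l^{g_l-1}}{q_k^{(g_k-1)[l:k]}}\right)^{\sfr'(\gcal)}\prod_{\pfr\in\mathfrak{R}(\Gamma)}q_{\pfr}^{\sigma},
\]
and it remains to absorb the $h_l$ and the $q_l$-power into a clean expression of the form $c\cdot q_k^{\sigma_1 g_k+\sigma_2 g_l}$.

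First I would bound the class number $h_l$ from above using Weil's Riemann hypothesis for the curve with function field $l$: one has $h_l\le (\sqrt{q_l}+1)^{2g_l}\le (2\sqrt{q_l})^{2g_l}$ for $q_l\ge 1$, hence $h_l^{-\vare'}\ge c' q_l^{-2\vare' g_l}$ up to a constant depending only on $G$ (since $\vare'\le \vare$ is bounded in terms of the absolute type). Next I would relate $q_l$ and $g_l$ to $q_k$ and $g_k$: since $l/k$ has degree $[l:k]\le 6$, the residue degrees are bounded, so $q_l\le q_k^{6}$ — more carefully $q_l=q_k^{f}$ for the (global) constant-field degree $f\mid[l:k]$ — and by Riemann--Hurwitz $g_l\le C(g_k+1)$ with $C$ depending only on $[l:k]$ and (through wild ramification, bounded since $p\ge 5$ and the degree is $\le 6$) only on $G$; conversely $g_k$ can be as small as $0$, so the genuinely useful direction is to express everything as powers of $q_k$ with exponents linear in $g_k$ and $g_l$. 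Collecting exponents: the $q_k$-contribution is $(g_k-1)\dim\gcal-(g_k-1)[l:k]\sfr'(\gcal)$, which is linear in $g_k$; the $q_l$-contribution, rewritten as $q_k^{f\cdot(\cdots)}$, together with the $h_l^{-\vare'}\ge c'q_k^{-2\vare' f g_l}$ term, is linear in $g_l$. After choosing $\sigma_1,\sigma_2$ small enough (they must be allowed to be negative a priori, but one checks $\sigma_1>0$ because $\dim\gcal>[l:k]\sfr(\gcal)$ case by case, and $\sigma_2>0$ because $(g_l-1)\sfr'(\gcal)$ dominates $2\vare' g_l$ after possibly shrinking — here one uses that $\sfr(\gcal)$ grows with the rank while $\vare'$ stays bounded, with the split case $\sfr=0$ handled separately since then $l=k$ and there is no $h_l$ to control) we obtain the asserted inequality.

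The main obstacle I expect is verifying the positivity of $\sigma_1$ and especially $\sigma_2$ uniformly across all absolute types, because $\sfr(\gcal)$, $\dim\gcal$, $[l:k]$, $\vare$ and $t$ all vary with the type and the naive estimate $h_l\le(\sqrt{q_l}+1)^{2g_l}$ costs an exponent $2\vare' g_l$ that must be strictly beaten by the $\sfr'(\gcal)$-power coming from the $B(\gcal)$ factor. Concretely one has to confirm, for each of the cases in the definition of $\sfr$ (outer $\A_r$ with $r$ odd/even, outer $\D_r$, outer $\E_6$, and $\D_r^{(6)}$ triality), that $\sfr(\gcal)$ — or $\sfr'(\gcal)=\sfr(\gcal)-2$ in the even-$\D_r$ case — strictly exceeds $2\vare$, which holds since the smallest relevant value is $\sfr=2r-1\ge 9$ for $\D_r$ (even $r\ge 4$ gives $\sfr'=2r-3\ge 5>2\vare=4$) and $\sfr=26$ for $\E_6$, with the type-$\A$ cases even larger; when $\gcal$ splits over $k$ we instead have $l=k$, $\vare'=1$, and $h_k$ is controlled by $h_k\le(2\sqrt{q_k})^{2g_k}$, which is absorbed into the $q_k^{(g_k-1)\dim\gcal}$ term since $\dim\gcal>2$. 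Once these finitely many numerical inequalities are checked, the rest is bookkeeping: group the $q_k$ and $q_l$ powers, use Weil's bound on $h_l$, use $q_l=q_k^{f}$ and Riemann--Hurwitz for $g_l$ versus $g_k$, and read off admissible $c,\sigma_1,\sigma_2$ (with $\sigma$ the one already produced by Proposition~\ref{p:LocalFactors}).
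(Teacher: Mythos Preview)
Your overall strategy matches the paper's: combine Proposition~\ref{p:CovolumeLowerBound}, Proposition~\ref{p:LocalFactors}, and Weil's bound $h_l\le(\sqrt{q_l}+1)^{2g_l}$, then verify positivity of the resulting exponents type by type. However, there is a genuine gap in your case analysis. You assert that ``$\sigma_1>0$ because $\dim\gcal>[l:k]\sfr(\gcal)$ case by case,'' but this is \emph{false} for the outer form $\A_r^{(2)}$ with $r$ even: there $\dim\gcal=r(r+2)$, $[l:k]=2$, $\sfr'=\sfr=\tfrac{r(r+3)}{2}$, and $\dim\gcal-[l:k]\sfr'=-r<0$. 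So the $g_k$-coefficient coming directly from $B(\gcal)$ is negative, and your bookkeeping collapses in this case.

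The paper repairs this by a further case split according to whether $q_l=q_k$, $q_l=q_k^2$, or $q_l=q_k^3$, and in the problematic geometric-extension case $q_l=q_k$ it invokes the Riemann--Hurwitz inequality $(g_l-1)\ge[l:k](g_k-1)$ in the \emph{opposite} direction from the one you mention: the large positive $g_l$-coefficient $\sfr'-2\vare'=\tfrac{(r+4)(r-1)}{2}$ is used to absorb the negative $g_k$-term, yielding a net positive exponent in $g_l$ alone (from which one then recovers a positive exponent in $g_k$ as well, since $g_l\gtrsim g_k$). Your sketch gestures at Riemann--Hurwitz only to bound $g_l$ from above by $g_k$, which is the wrong direction here; the fix is exactly this transfer of weight from $g_l$ to $g_k$ in the one bad type.
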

\begin{proof}
By Riemann hypothesis for curves over finite fields, we know that 
\begin{equation}\label{e:RH}
(\sqrt{q_l}-1)^{2g_l}\le h_l \le (\sqrt{q_l}+1)^{2g_l}.
\end{equation}
By Proposition~\ref{p:CovolumeLowerBound}, Proposition~\ref{p:LocalFactors} and inequality~(\ref{e:RH}), we have that
\[
\vol(G/\Gamma)\ge c_6\cdot (\sqrt{q_l}+1)^{-2g_l\vare'}\cdot q_k^{(g_k-1)\dim \gcal}\cdot \left(\frac{q_l^{(g_l-1)}}{q_k^{(g_k-1)[l:k]}}\right)^{\sfr'(\gcal)}\prod_{\pfr\in \mathfrak{R}(\Gamma)} q_{\pfr}^{\sigma}
\]
\begin{equation}\label{e:RawInequality}
=c_7\cdot (\sqrt{q_l}+1)^{-2g_l\vare'}\cdot q_k^{g_k(\dim \gcal-[l:k]\sfr')}\cdot q_l^{g_l\sfr'}\prod_{\pfr\in \mathfrak{R}(\Gamma)} q_{\pfr}^{\sigma},
\end{equation}
where $c_7$ just depends on $G$ as $q_k$ is at most equal to the number of elements of the residue field of $K$ and $\dim \gcal$ just depends on $G$.
\\

\noindent
We consider inner forms and outer forms separately. First assume that $k=l$; then, by inequality~(\ref{e:RawInequality}), we have
\[
\vol(G/\Gamma)\ge c_7 (\sqrt{q_k}+1)^{-2g_k}\cdot q_k^{g_k \dim \gcal}\prod_{\pfr\in \mathfrak{R}(\Gamma)} q_{\pfr}^{\sigma}. 
\]
Since we assumed that the characteristic of $K$ is at least $3$, $q_k$ is at least $3$. Hence $\sqrt{q_k}+1< q_k^{1-\sigma_3}$, for a positive number $\sigma_3$. Thus $\sigma_4=\dim\gcal-2(1-\sigma_3)$ is a positive number and 
\begin{equation}\label{e:k=l}
\vol(G/\Gamma)\ge c_7\cdot q_k^{\sigma_4 g_k}\prod_{\pfr\in \mathfrak{R}(\Gamma)} q_{\pfr}^{\sigma}. 
\end{equation}
Now assume that $q_l=q_k^{2}$; then $g_l=g_k$, and so by inequality~(\ref{e:RawInequality}), we have
\[
\vol(G/\Gamma)\ge c_7\cdot (q_k+1)^{-2g_k} q_k^{g_k \dim \gcal} \prod_{\pfr\in \mathfrak{R}(\Gamma)} q_{\pfr}^{\sigma}.
\] 
We notice that on one hand we have $q_k+1<q_k^{2-\sigma_5}$, for a fixed positive number $\sigma_5$ and any $q_k$, and on the other hand $\sigma_6=\dim \gcal- 2( 2-\sigma_5)$ is positive as the $K$-rank of $\bbg_0$ is at least $2$.  Therefore we have
\begin{equation}\label{e:unramified}
\vol(G/\Gamma)\ge c_7\cdot q_k^{\sigma_6 g_k} \prod_{\pfr\in \mathfrak{R}(\Gamma)} q_{\pfr}^{\sigma}. 
\end{equation}
Now assume that $q=q_l=q_k$; then, by inequality~(\ref{e:RawInequality}) coupled with the fact that $\sqrt{q}+1\le q$ as $q\ge 3$, we have
\begin{equation}\label{e:k_not_l}
\vol(G/\Gamma)\ge c_7\cdot q^{g_k (\dim\gcal-[l:k]\sfr')+g_l(\sfr'-2\vare')} \prod_{\pfr\in \mathfrak{R}(\Gamma)} q_{\pfr}^{\sigma}.
\end{equation}
In the following table, we give the possible values of $\sigma_7=\dim\gcal-[l:k]\sfr'$ and $\sigma_8=\sfr'-2\vare'$ for all the possible types. 
\[
\begin{array}{c|c|c|c|c|c}
&\dim \gcal&\sfr &\sfr' &\dim\gcal-[l:k]\sfr' &\sfr'-2\vare'\\
\hline
{\rm A}_r^{(2)}, 2|r&r(r+2)& \frac{r(r+3)}{2}& \frac{r(r+3)}{2}& -r&\frac{(r+4)(r-1)}{2} \\
\hline

{\rm A}_r^{(2)}, 2\nmid r& r(r+2)& \frac{(r-1)(r+2)}{2}& \frac{(r-1)(r+2)}{2}&r+2& \frac{(r+3)(r-2)}{2}\\
\hline

{\rm D}_r^{(2)}, 2|r& r(2r-1)& 2r-1& 2r-3&(2r-1)(r-2)+4&2r-7\\
\hline

{\rm D}_r^{(2)}, 2\nmid r&r(2r-1)& 2r-1& 2r-1&(2r-1)(r-2)& 2r-3 \\
\hline

{\rm E}_6^{(2)}&78&26&26& 26&24\\
\hline

{\rm D}_4^{(3),(6)}& 28 &7&5&13&1
\end{array}
\]
We observe that both of these values are positive except when $\bbg$ is of type ${\rm A}_r$ with $r$ even. When $\bbg$ is of this type, we have
\begin{equation}\label{e:A_r}
\begin{array}{rc}
\vol(G/\Gamma)\ge& c_7\cdot q^{-g_k r +g_l (r+4)(r-1)/2}\prod_{\pfr\in \mathfrak{R}(\Gamma)} q_{\pfr}^{\sigma}\\
&\\
\ge& c_8\cdot q^{[-(g_k-1) r+(g_l-1) r/2]+(g_l-1)[(r+4)(r-1)-r]/2 }\prod_{\pfr\in \mathfrak{R}(\Gamma)} q_{\pfr}^{\sigma}\\
&\\
\ge&  c_9\cdot q^{g_l[(r+1)^2-5]/2 }\prod_{\pfr\in \mathfrak{R}(\Gamma)} q_{\pfr}^{\sigma},
\end{array}
\end{equation}
where $c_8$ and $c_9$ just depend on $G$ as $(g_l-1)-[l:k](g_k-1)$ is non-negative and $q$ is at most equal to the number of elements of the residue field of $K$. We also note that $\sigma_9=[(r+1)^2-5]/2$ is positive since $r$ is even and positive. 
\\

\noindent
The only remaining case is when $\gcal$ is of type ${\rm D}_4^{(3)}$ and $q_l=q_k^3$, in which case $g_l=g_k$ and, by inequality~(\ref{e:RawInequality}), we have
\[
\vol(G/\Gamma)\ge c_7\cdot (\sqrt{q_k}^3+1)^{-4 g_k}\cdot q_k^{28(g_k-1)}\prod_{\pfr\in \mathfrak{R}(\Gamma)} q_{\pfr}^{\sigma}.
\]
We note that $\sqrt{q_k}^3+1\le q_k^2$ for any $q_k\ge 3$. Hence we have
\begin{equation}\label{e:D_4}
\vol(G/\Gamma)\ge c_{10}\cdot q_k^{20 g_k}\prod_{\pfr\in \mathfrak{R}(\Gamma)} q_{\pfr}^{\sigma},
\end{equation}
where $c_{10}$ just depends on $G$.
\\

\noindent 
It is straightforward to finish the proof using inequalities~(\ref{e:k=l}), (\ref{e:unramified}), (\ref{e:k_not_l}), (\ref{e:A_r}) and (\ref{e:D_4}).
\end{proof}
\subsection{Number of possible pair of function fields $(k,l)$.}\label{ss:k-l}
Here we will use a result of de Jong and Katz~\cite{dJK} on the number of curves over  a finite field to give an upper bound on the number of possible pairs of function fields $(k,l)$.
\\

\noindent
First we point out that since $k_{\pfr_0}$ is isomorphic to $K$ for some $\pfr_0\in V_k$, $q_k^{\deg \pfr_0}=\#\f$ where $\f$ is the residue field of $K$. Hence the number of possibilities for $q_k$ is bounded only by $G$ and this upper bound is independent of $x$ the bound on the covolume of $\Gamma$. So without loss of generality we can assume that $q_k$ is fixed.
\\

\noindent
If $\vol(G/\Gamma)\le x$, then by Theorem~\ref{t:MainInequality}, we have
\[
x\ge c p^{\sigma_1 g_k+\sigma g_l}.
\]
Hence $g_k\le \sigma_1^{-1}\log(x/c)$ and $g_l\le \sigma_2^{-1}\log(x/c)$. By de Jong and Katz~\cite{dJK}, the number of function fields with a given constant field of size $q$ and genus $g$ is at most
$
c'^{g\log g},
$
where $c'=c'(q)$ just depends on $q$. Hence number of possible pair of function fields $(k,l)$ is at most
\[
x^{c_{11}\log\log x},
\]
where $c_{11}$ is a positive number which just depends on $G$.
\begin{remark}\label{r:dJK}
It is worth mentioning that by a similar argument as above one gets that the number of possible 
pairs of the function fields $(k,l)$  is at most $x^{c_{11}}$ (where $c_{11}$ is a number which 
just depends on $G$) if Question~\ref{q:Maximal} has a positive answer. 
\end{remark}
\subsection{Number of possible $\bbg$.}
Here we use local-global principle to give an upper bound for the number of admissible $\bbg$'s.
\\

\noindent
From now on we shall fix $(k,l)$ and as a consequence $\gcal$. Since $\bbg$ is a $k$-inner form of $\gcal$, in order to find an upper bound on the number of possible such groups, we can give an upper bound on the number of possible elements of $H^1(k,\adgcal)$. (We note that $\adgcal$ is a smooth $k$-group scheme, and so the Galois and the flat cohomologies are the same.) First we show that
\[
H^1(k,\adgcal)\rightarrow \prod_{\pfr\in V_k} H^1(k_{\pfr},\adgcal)
\]
is an injective map, and then we count at the local level. 
\\

\noindent
\begin{lem}\label{l:nthroot}
Let $\phi_n$ be the homomorphism $x\mapsto x^n$ from the multiplicative $k$-group scheme $\bbg_m$ to itself. Let $\mu_n$ be the $k$-group scheme kernel of $\phi_n$. Then
\[
H^2(k,\mu_n)\rightarrow \prod_{\pfr\in V_k} H^2(k_{\pfr},\mu_n)
\]
is an injective map.
\end{lem}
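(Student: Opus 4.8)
The plan is to deduce the injectivity of $H^2(k,\mu_n)\to\prod_{\pfr\in V_k}H^2(k_{\pfr},\mu_n)$ from the Kummer sequence together with the fundamental exact sequence of class field theory for the Brauer group of a global field. First I would write down the Kummer short exact sequence of fppf (equivalently, since $\bbg_m$ is smooth, étale) $k$-group schemes
\[
1\longrightarrow \mu_n\longrightarrow \bbg_m\xrightarrow{\ \phi_n\ } \bbg_m\longrightarrow 1,
\]
and apply part (2) of Theorem~\ref{t:cohom} to obtain the long exact cohomology sequence, both over $k$ and over each completion $k_{\pfr}$. Using $H^1(k,\bbg_m)=\Cl(k)$-type vanishing — more precisely Hilbert 90 gives $H^1=0$ and the relevant piece is $H^2$ — one gets that $H^2(k,\mu_n)$ sits in an exact sequence with $\Br(k)=H^2(k,\bbg_m)$; concretely $H^2(k,\mu_n)$ surjects onto the $n$-torsion $\Br(k)[n]$ with kernel $k^\times/(k^\times)^n$, and similarly locally. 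So the diagram I want to analyze is the map of these two short exact sequences, one for $k$ and one for $\prod_{\pfr}k_{\pfr}$.

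The key step is then a diagram chase using two classical facts. The first is that $k^\times/(k^\times)^n\to\prod_{\pfr}k_{\pfr}^\times/(k_{\pfr}^\times)^n$ is injective: an element of $k^\times$ that is an $n$-th power in every completion is an $n$-th power in $k$ (this is a standard consequence of, e.g., the Grunwald–Wang theorem, or more elementarily of the fact that a global function field element which is locally an $n$-th power everywhere is globally so). The second, and the real engine, is the injectivity of $\Br(k)\to\bigoplus_{\pfr}\Br(k_{\pfr})$, which is part of the Hasse–Brauer–Noether exact sequence $0\to\Br(k)\to\bigoplus_{\pfr}\Br(k_{\pfr})\to\bbq/\bbz\to0$ from global class field theory (valid for any global field, in particular a global function field). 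Restricting to $n$-torsion, $\Br(k)[n]\to\bigoplus_{\pfr}\Br(k_{\pfr})[n]$ is injective. Now chasing the commutative ladder
\[
\begin{CD}
0 @>>> k^\times/(k^\times)^n @>>> H^2(k,\mu_n) @>>> \Br(k)[n] @>>> 0\\
@. @VVV @VVV @VVV @.\\
0 @>>> \prod_{\pfr} k_{\pfr}^\times/(k_{\pfr}^\times)^n @>>> \prod_{\pfr} H^2(k_{\pfr},\mu_n) @>>> \prod_{\pfr}\Br(k_{\pfr})[n] @>>> 0
\end{CD}
\]
with the outer two vertical maps injective forces the middle one injective by the five lemma (or a direct chase).

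I expect the main obstacle to be bookkeeping rather than a deep point: one must be careful that the $n$ here may be divisible by the characteristic $p$ of $k$, so $\mu_n$ is not étale and one genuinely needs the flat-cohomology formalism of Theorem~\ref{t:cohom} rather than Galois cohomology — in particular the identification $H^2(k,\mu_n)\cong k^\times/(k^\times)^n$ (extension by) $\Br(k)[n]$ must be justified via fppf Kummer theory, and likewise the class-field-theory input $\Br(k)\hookrightarrow\bigoplus\Br(k_{\pfr})$ and the local computations of $H^2(k_{\pfr},\mu_n)$ should be cited in a form that does not implicitly assume $p\nmid n$. Once the flat-cohomological Kummer sequence and the Hasse principle for $\Br$ are in hand, the remaining argument is the routine diagram chase sketched above.
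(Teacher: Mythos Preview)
Your overall strategy---Kummer sequence plus the Hasse principle for Brauer groups---is precisely the paper's. However, you have miscomputed the long exact sequence. From $1\to\mu_n\to\bbg_m\to\bbg_m\to 1$ and Hilbert~90 ($H^1(k,\bbg_m)=0$, valid in flat cohomology since $\bbg_m$ is smooth), the relevant piece reads
\[
0 = H^1(k,\bbg_m) \longrightarrow H^2(k,\mu_n) \longrightarrow H^2(k,\bbg_m) = Br(k) \xrightarrow{\ \cdot n\ } Br(k),
\]
so $H^2(k,\mu_n)$ \emph{embeds} in $Br(k)$ with image $Br(k)[n]$; there is no kernel $k^\times/(k^\times)^n$. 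You have conflated $H^1(k,\mu_n)\simeq k^\times/(k^\times)^n$ with a contribution in degree two. This matters, because your extra step---injectivity of $k^\times/(k^\times)^n\to\prod_{\pfr}k_{\pfr}^\times/(k_{\pfr}^\times)^n$---is exactly the local--global principle for $n$-th powers, which is \emph{false} in general (the Grunwald--Wang phenomenon), so had your sequence been correct your argument would have a genuine gap.

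With the correct identification the proof is immediate, as the paper does it: $H^2(k,\mu_n)\hookrightarrow Br(k)\hookrightarrow\prod_{\pfr}Br(k_{\pfr})$ by Brauer--Hasse--Noether, and commutativity of the square with $\prod_{\pfr}H^2(k_{\pfr},\mu_n)\to\prod_{\pfr}Br(k_{\pfr})$ then forces the desired injectivity. No five lemma or Grunwald--Wang is needed.
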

\begin{proof}
By the definition, we have the following short exact sequence
\[
1\rightarrow \mu_n \rightarrow \bbg_m \xrightarrow{\phi_n} \bbg_m \rightarrow 1.
\]
Therefore we get the following diagrams (each row is a long exact sequence)
\begin{equation}\label{e:ExactN}
\begin{array}{ccccc}
H^1(k,\bbg_m)&\rightarrow& H^2(k,\mu_n)&\rightarrow &H^2(k,\bbg_m)\\
\downarrow&&\downarrow&&\downarrow\\
\prod_{\pfr\in V_k}H^1(k_{\pfr},\bbg_m)&\rightarrow&\prod_{\pfr\in V_k}H^2(k_{\pfr},\mu_n)&\rightarrow&\prod_{\pfr\in V_k}H^2(k_{\pfr},\bbg_m).
\end{array}
\end{equation} 
Since $\bbg_m$ is a smooth $k$-group scheme, the flat cohomologies are isomorphic to the Galois cohomologies. Hence $H^1(k,\bbg_m)=1$, $H^1(k_{\pfr},\bbg_m)=1$, for  any $\pfr$, $H^2(k,\bbg_m)\simeq Br(k)$ and $H^2(k_{\pfr},\bbg_m)\simeq Br(k_{\pfr})$. By Brauer-Hasse-Noether theorem, we have that
\[
0\rightarrow Br(k) \rightarrow \oplus_{\pfr\in V_k} Br(k_{\pfr}) \rightarrow \bbq/\bbz \rightarrow 0
\]
is a short exact sequence. Thus the last vertical arrow in diagram (\ref{e:ExactN}) is injective. So, following the arrows in diagram~(\ref{e:ExactN}), one can easily  finish the argument.
\end{proof}
\begin{lem}\label{l:embeddingR}
Let $\mu_n$ be as above; then
\[
H^2(k,R_{l/k}(\mu_n))\rightarrow \prod_{\pfr\in V_k} H^2(k_{\pfr},R_{l/k}(\mu_n))
\]
is an injective map.
\end{lem}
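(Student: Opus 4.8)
The plan is to deduce the statement from Lemma~\ref{l:nthroot} applied to the field $l$ in place of $k$, via Shapiro's lemma for flat cohomology. First I would observe that the third part of Theorem~\ref{t:cohom}, although stated over the local fields $k_\pfr$, is really a statement over an arbitrary $k$-algebra: B.~Conrad's argument given there only uses that $\Spec(l\otimes_k A)\to\Spec(A)$ is a finite \'etale covering and that the Weil restriction (pushforward) functor on fppf abelian sheaves is exact, hence has vanishing higher derived functors, so the Leray spectral sequence for this covering degenerates. Taking $A=k$ this yields a natural isomorphism $H^i(k,R_{l/k}(\mu_n))\simeq H^i(l,\mu_n)$, and taking $A=k_\pfr$ (so that $l\otimes_k k_\pfr=\prod_{\pfrl\mid\pfr}l_\pfrl$) it recovers the isomorphism $H^i(k_\pfr,R_{l/k}(\mu_n))\simeq\bigoplus_{\pfrl\mid\pfr}H^i(l_\pfrl,\mu_n)$, which is exactly Theorem~\ref{t:cohom}(3).

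Second, I would check that these two isomorphisms intertwine the localization maps: the square whose top row is $H^2(k,R_{l/k}(\mu_n))\xrightarrow{\sim}H^2(l,\mu_n)$, whose bottom row is $\prod_{\pfr\in V_k}H^2(k_\pfr,R_{l/k}(\mu_n))\xrightarrow{\sim}\prod_{\pfr\in V_k}\bigoplus_{\pfrl\mid\pfr}H^2(l_\pfrl,\mu_n)$, whose left arrow is restriction to the completions of $k$, and whose right arrow is the product of the restriction maps $H^2(l,\mu_n)\to H^2(l_\pfrl,\mu_n)$, commutes. This is the naturality of the Leray spectral sequence, equivalently of the Shapiro isomorphism, with respect to the base change $\Spec(k_\pfr)\to\Spec(k)$. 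One also uses that, as $\pfr$ runs over $V_k$ and $\pfrl$ over the places of $l$ lying above $\pfr$, one obtains every place of $l$ exactly once, so the target of the bottom row is $\prod_{\pfrl}H^2(l_\pfrl,\mu_n)$ indexed by all places of $l$.

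Finally, after these identifications the map in the statement becomes $H^2(l,\mu_n)\to\prod_{\pfrl}H^2(l_\pfrl,\mu_n)$. Since $l$ is a finite separable extension of the global function field $k$, it is itself a global function field, and $(\mu_n)_l$ is the $k$-group scheme kernel of the $n$-th power map on $\bbg_{m,l}$; hence this is precisely Lemma~\ref{l:nthroot} with $l$ replacing $k$, whose proof (via the Brauer--Hasse--Noether exact sequence for $l$ and the long exact sequence of $1\to\mu_n\to\bbg_m\xrightarrow{\phi_n}\bbg_m\to1$ over $l$) applies verbatim. The only step that needs genuine care is the second one, the compatibility of the Shapiro isomorphism with passage to completions; everything else is a direct application of Theorem~\ref{t:cohom}(3) and Lemma~\ref{l:nthroot}.
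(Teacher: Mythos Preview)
Your argument is correct and follows essentially the same route as the paper: apply Shapiro's lemma (Theorem~\ref{t:cohom}(3)) both globally and locally to reduce to the map $H^2(l,\mu_n)\to\prod_{\pfrl\in V_l}H^2(l_\pfrl,\mu_n)$, and then invoke Lemma~\ref{l:nthroot} with $l$ in place of $k$. You are simply more explicit than the paper about two points it takes for granted, namely that the global Shapiro isomorphism $H^2(k,R_{l/k}(\mu_n))\simeq H^2(l,\mu_n)$ follows from the same spectral-sequence argument and that the Shapiro isomorphisms are compatible with restriction to completions.
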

\begin{proof}
By Shapiro's lemma~(see Theorem \ref{t:cohom}), we have 
\[
H^2(k,R_{l/k}(\mu_n))\simeq H^2(l,\mu_n)\h\&\h H^2(k_{\pfr},R_{l/k}(\mu_n))\simeq \oplus_{\pfrl |\pfr}H^2(l_{\pfrl},\mu_n).
\]
Hence the map in question is induced by the map
\[
H^2(l,\mu_n)\rightarrow \prod_{\pfrl\in V_l} H^2(l_{\pfrl},\mu_n),
\]
which is an embedding by Lemma~\ref{l:nthroot} and we are done.
\end{proof}
\begin{lem}\label{l:exactR}
Let $\mu_n$ be as above,  $\nu=R_{l/k}(\mu_n)$, and $\nu^{(1)}=R^{(1)}_{l/k}(\mu_n)$ be a $k$-group scheme which is the kernel of the norm map
$
\ker(R_{l/k}(\mu_n)\xrightarrow{N_{l/k}} \mu_n);
$
then
\[
\phi: H^2(k,\nu^{(1)})\rightarrow \prod_{\pfr\in V_k} H^2(k_{\pfr},\nu^{(1)})
\]
is an injective map if either $[l:k]$ and $n$ are coprime, or $[l:k]=2$.
\end{lem}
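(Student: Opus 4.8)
The plan is to deduce injectivity of $\phi$ from the injectivity statements already established in Lemmas~\ref{l:nthroot} and~\ref{l:embeddingR}, using the norm exact sequence that defines $\nu^{(1)}$ and a diagram chase. First I would write down the defining short exact sequence of $k$-group schemes
\[
1\rightarrow \nu^{(1)} \rightarrow \nu \xrightarrow{N_{l/k}} \mu_n \rightarrow 1,
\]
and note that it is exact as fppf sheaves (the norm map on $R_{l/k}(\mu_n)$ is surjective on the big fppf site even though it need not be surjective on rational points). Applying flat cohomology (part 2 of Theorem~\ref{t:cohom}) over $k$ and over each $k_{\pfr}$ and comparing via the localization maps gives a commutative ladder whose rows are the long exact sequences
\[
H^1(k,\mu_n) \rightarrow H^2(k,\nu^{(1)}) \rightarrow H^2(k,\nu) \xrightarrow{N_{l/k}} H^2(k,\mu_n),
\]
and likewise locally. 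The vertical arrow $H^2(k,\nu)\rightarrow \prod_{\pfr} H^2(k_{\pfr},\nu)$ is injective by Lemma~\ref{l:embeddingR}. So a class $\xi\in H^2(k,\nu^{(1)})$ that dies everywhere locally maps to a class in $H^2(k,\nu)$ that dies everywhere locally, hence is already zero in $H^2(k,\nu)$; thus $\xi$ comes from some $\eta\in H^1(k,\mu_n)$, and the real work is to show $\eta$ can be taken to be $0$, i.e.\ to control the connecting map $H^1(k,\mu_n)\rightarrow H^2(k,\nu^{(1)})$ together with its local analogues.

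For that I would split into the two hypothesized cases. When $\gcd([l:k],n)=1$: the composite $\mu_n \hookrightarrow \nu \xrightarrow{N_{l/k}} \mu_n$ is multiplication by $[l:k]$, which is an isomorphism on $\mu_n$ since $[l:k]$ is invertible mod $n$; this splits the norm sequence, so $\nu \simeq \nu^{(1)}\times \mu_n$ as $k$-group schemes, the connecting map $H^1(k,\mu_n)\rightarrow H^2(k,\nu^{(1)})$ is zero, and the same holds locally. Then the ladder immediately forces $H^2(k,\nu^{(1)})\hookrightarrow H^2(k,\nu)$, and composing with Lemma~\ref{l:embeddingR} finishes this case. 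When $[l:k]=2$: the splitting fails in characteristic $2$ dividing $n$, so instead I would work directly with the ladder. Chasing $\xi$ as above, I get $\eta\in H^1(k,\mu_n)\simeq k^\times/(k^\times)^n$ whose image $\partial\eta=\xi$ dies in every $H^2(k_{\pfr},\nu^{(1)})$; the task is to promote this to $\eta$ lying in the image of $H^1(k,\nu)\rightarrow H^1(k,\mu_n)$, equivalently $\eta$ being a local norm everywhere, and then invoke a Hasse-norm–type statement for the quadratic extension $l/k$. Concretely, after the chase $\eta$ is a global element of $k^\times/(k^\times)^n$ which is locally a norm from $l$ at every place, and for a degree-$2$ extension the Hasse norm theorem (valid for global function fields) says a global element that is a local norm everywhere is a global norm; hence $\eta$ lifts to $H^1(k,\nu)$ and $\xi=\partial\eta=0$.

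The main obstacle I expect is the second case: making the diagram chase genuinely produce the statement ``$\eta$ is locally a norm at every place,'' and then correctly packaging the Hasse norm theorem in cohomological language, since the relevant sequence involves flat (not just Galois) $H^2$ with $p$-torsion coefficients when $p=\char k$ divides $n$. I would handle the wild-ramification subtlety exactly as in the proof of Lemma~\ref{l:nthroot}: reduce the flat $H^2$ of $\mu_n$ and of $R_{l/k}(\mu_n)$ to Brauer groups via the Kummer sequence $1\rightarrow \mu_n\rightarrow \bbg_m\rightarrow \bbg_m\rightarrow 1$ and Shapiro's lemma (Theorem~\ref{t:cohom}, part 3), so that the whole problem is transported to a statement about $Br(k)$, $Br(l)$ and the Brauer–Hasse–Noether exact sequences over $k$ and $l$, where the norm map becomes the corestriction and injectivity on the kernel of corestriction for a degree-$2$ extension is classical. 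A clean alternative, which I would use if the direct chase gets unwieldy, is to iterate Lemma~\ref{l:embeddingR} one more step: the sequence $1\rightarrow \nu^{(1)}\rightarrow \nu\rightarrow \mu_n\rightarrow 1$ and, in the coprime case, the splitting, reduce everything to injectivity for $\nu$ and $\mu_n$ separately, both already proved; only the $[l:k]=2$, $p\mid n$ overlap requires the extra norm-theorem input.
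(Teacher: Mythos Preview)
Your proposal is correct and follows essentially the same route as the paper: the same norm short exact sequence, the same commutative ladder of long exact sequences, the same appeal to Lemma~\ref{l:embeddingR} to push any locally trivial class into the image of $H^1(k,\mu_n)\to H^2(k,\nu^{(1)})$, and the same Hasse norm theorem in the $[l:k]=2$ case. In the coprime case you phrase the argument as a splitting of $1\to\nu^{(1)}\to\nu\to\mu_n\to 1$ (the composite $\mu_n\hookrightarrow\nu\xrightarrow{N}\mu_n$ being multiplication by $[l:k]$), while the paper argues directly that every class $x\in k^\times/(k^\times)^n$ is a norm by taking $y=x^m$ with $m[l:k]\equiv 1\pmod n$; these are the same observation. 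For the quadratic case the paper resolves your ``main obstacle'' more simply than your Brauer-group detour: once one is not in the coprime case, $n$ is even, so $(k_\pfr^\times)^n\subseteq (k_\pfr^\times)^2=N_{l/k}(k_\pfr^\times)\subseteq N_{l/k}((l\otimes_k k_\pfr)^\times)$, and hence ``$x$ is locally a norm modulo $n$-th powers'' already forces ``$x$ is locally a norm,'' after which the Hasse norm theorem applies directly to the element $x$ itself.
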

\begin{proof}
By the definition, the following  is a short exact sequence
\[
1\rightarrow \nu^{(1)} \rightarrow \nu \rightarrow \mu_n \rightarrow 1.
\]
Hence we get the following diagram (each row is an exact sequence):
\begin{equation}\label{e:ExactR}
\begin{array}{ccccc}
H^1(k,\mu_n)&\xrightarrow{\delta}& H^2(k,\nu^{(1)})&\rightarrow &H^2(k,\nu)\\
\downarrow&&\downarrow&&\downarrow\\
\prod_{\pfr\in V_k}H^1(k_{\pfr},\mu_n)&\xrightarrow{(\delta_{\pfr})}&\prod_{\pfr\in V_k}H^2(k_{\pfr},\nu^{(1)})&\rightarrow&\prod_{\pfr\in V_k}H^2(k_{\pfr},\nu).
\end{array}
\end{equation}
As all the group schemes are abelian, it is enough to show that the kernel of $\phi$ is trivial. Since, by Lemma~\ref{l:embeddingR}, the last vertical arrow is an embedding and the first row is an exact sequence,  kernel of $\phi$ is in the image of the boundary map $\delta$. On the other hand, $H^1(k,\mu_n)\simeq k^{\times}/{k^{\times}}^n$, $H^1(k_{\pfr},\mu_n)\simeq k_{\pfr}^{\times}/{k_{\pfr}^{\times}}^n$, and we have the following diagram (each row is an exact sequence):
\[
\begin{array}{ccccc}
l^{\times}/{l^{\times}}^n&\rightarrow & k^{\times}/{k^{\times}}^n &\xrightarrow{\delta}& H^2(k,\nu^{(1)})\\
\downarrow&&\downarrow&&\downarrow\\
\prod_{\pfr\in V_k} (l\otimes_k k_{\pfr})^{\times}/{ (l\otimes_k k_{\pfr})^{\times}}^n&\rightarrow &\prod_{\pfr\in V_k} k_{\pfr}^{\times}/{k_{\pfr}^{\times}}^n&\xrightarrow{(\delta_{\pfr})}&\prod_{\pfr\in V_k}H^2(k_{\pfr},\nu^{(1)})
\end{array}
\]
So overall, by the above discussion and following the arrows in diagram~\ref{e:ExactR} and the above one, for any $\eta\in \ker \phi$, one can find $x\in k^{\times}$ such that
\begin{itemize}
\item[1-] $\delta(x {k^{\times}}^n)=\eta$.
\item[2-] For any $\pfr\in V_k$, there is $y_{\pfr}\in \oplus_{\pfrl|\pfr} {l_{\pfrl}}^{\times}$ such that $x {k_{\pfr}^{\times}}^n=N_{l/k}(y_{\pfr}) {k_{\pfr}^{\times}}^n$.
\end{itemize}
If $[l:k]$ and $n$ are coprime, then for some $y\in l$, we have $x {k^{\times}}^n=N_{l/k}(y) {k^{\times}}^n$, and so $\eta$ is trivial, and we are done.
\\

\noindent
If they are not coprime, then, by the assumptions, $[l:k]=2$ and $n$ is even, in which case, ${k^{\times}}^n\subseteq N_{l/k}(l^{\times})$ and ${k_{\pfr}^{\times}}^n\subseteq N_{l/k}((l\otimes_k k_{\pfr})^{\times})$. Hence for any $\pfr$, $x$ is in the image of the norm map, i.e. $x\in N_{l/k}((l\otimes_k k_{\pfr})^{\times})$. Thus, by Hasse norm theorem~\cite[Chapter 10]{Sch}, there is $y\in l$ such that $N_{l/k}(y)=x$ and so $\eta$ is again trivial, and we are done.
\end{proof}
\begin{thm}\label{t:adjointembedd}
Let $\gcal, k$ and $l$ be as before; then
\[
\phi:H^1(k,\adgcal)\rightarrow \prod_{\pfr\in V_k} H^1(k_{\pfr},\adgcal)
\] 
is an injective map.
\end{thm}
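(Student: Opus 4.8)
The plan is to reduce the assertion for the adjoint group $\adgcal$ to a Hasse principle for its center. Write $\mu$ for the scheme–theoretic center of the simply connected group $\gcal$, so that $1\to\mu\to\gcal\to\adgcal\to 1$ is a central isogeny. Since $\mu$ is a central abelian subgroup, this exact sequence gives a connecting map and an exact sequence of pointed sets
\[
H^1(k,\gcal)\to H^1(k,\adgcal)\xrightarrow{\ \delta\ }H^2(k,\mu),
\]
functorial in the base field and hence compatible with restriction to all the completions $k_\pfr$; moreover all cohomologies here are ordinary Galois cohomologies, as $\gcal$ and $\adgcal$ are smooth. I would prove two things: first, that $\delta$ is injective, and second, that $H^2(k,\mu)\to\prod_{\pfr}H^2(k_\pfr,\mu)$ is injective. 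Granting both, if $\phi(\eta_1)=\phi(\eta_2)$ then $\delta(\eta_1)$ and $\delta(\eta_2)$ have the same image in $\prod_\pfr H^2(k_\pfr,\mu)$, hence are equal, hence $\eta_1=\eta_2$.

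For the injectivity of $\delta$: given $\eta_1,\eta_2$ with $\delta(\eta_1)=\delta(\eta_2)$, I would twist by a cocycle representing $\eta_1$. An inner twist ${}^{\eta_1}\!\adgcal$ of an adjoint $k$-group is again adjoint, its simply connected cover ${}^{\eta_1}\!\gcal$ is simply connected over $k$ with the \emph{same} center $\mu$ (inner twisting does not alter the $*$-action on the root datum, and the center is pointwise fixed by inner automorphisms), and it is known that $H^1(k,{}^{\eta_1}\!\gcal)=0$ for every simply connected group over a global function field (Harder). Under the twisting bijection $H^1(k,\adgcal)\simeq H^1(k,{}^{\eta_1}\!\adgcal)$, which carries $\eta_1$ to the neutral element and differs from $\delta$ by the translation by $\delta(\eta_1)$ on $H^2(k,\mu)$, the class $\eta_2$ goes to some $\eta_2'$ with neutral $\delta'$-image; hence $\eta_2'$ lies in the image of $H^1(k,{}^{\eta_1}\!\gcal)=0$, so $\eta_2'$ is neutral and $\eta_1=\eta_2$. (For types $\mathrm{E}_8,\mathrm{F}_4,\mathrm{G}_2$ the center is trivial, $\adgcal=\gcal$, and $H^1(k,\adgcal)=0$ outright, so $\phi$ is injective for a trivial reason.)

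The remaining — and only genuinely laborious — step is the Hasse principle for $\mu$, and here one runs through the list of absolute types. If $\gcal$ is $k$-split or an inner form, $\mu$ is $\mu_n$ with $n\in\{r+1,2,3,4\}$ according to the type, except $\mu=\mu_2\times\mu_2$ for type $\mathrm{D}_r$ with $r$ even; Lemma~\ref{l:nthroot}, together with additivity of $H^2$ over a product, disposes of these. For the outer forms one identifies $\mu$ as: $R^{(1)}_{l/k}(\mu_{r+1})$ with $[l:k]=2$ for ${}^2\mathrm{A}_r$; $R^{(1)}_{l/k}(\mu_3)$ with $[l:k]=2$ for ${}^2\mathrm{E}_6$; $R^{(1)}_{l/k}(\mu_4)$ with $[l:k]=2$ for ${}^2\mathrm{D}_r$ with $r$ odd (the diagram involution acts by inversion on the cyclic center $\mu_4$); $R_{l/k}(\mu_2)$ with $[l:k]=2$ for ${}^2\mathrm{D}_r$ with $r$ even (the involution swaps the two spinor $\mu_2$'s); and $R^{(1)}_{l/k}(\mu_2)$ with $[l:k]=3$ for the triality forms ${}^3\mathrm{D}_4$ and ${}^6\mathrm{D}_4$ (the cubic $l$, cyclic or sitting inside an $S_3$-extension, permuting the three nontrivial points of $\mu_2\times\mu_2$). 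Lemma~\ref{l:embeddingR} handles the $R_{l/k}(\mu_2)$ case, and in each $R^{(1)}_{l/k}(\mu_n)$ case the parameters satisfy the hypotheses of Lemma~\ref{l:exactR} (either $[l:k]=2$, or $[l:k]=3$ which is coprime to $n=2$); note that the coprime-degree branch of that lemma uses only that $x^{[l:k]}\in N_{l/k}(l^\times)$, so it applies equally to the non-Galois cubic $l$ arising for ${}^6\mathrm{D}_4$. Assembling all cases gives the injectivity of $H^2(k,\mu)\to\prod_\pfr H^2(k_\pfr,\mu)$, and the theorem follows. The point demanding real care is the precise Galois-module structure of $\mu$ in the $\mathrm{D}_r$ and triality cases; the rest is bookkeeping.
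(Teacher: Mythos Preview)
Your proof is correct and follows essentially the same route as the paper: reduce to injectivity of $\delta$ via twisting and Harder's vanishing theorem for $H^1$ of simply connected groups over global function fields, and separately establish the Hasse principle for $H^2(k,\mu)$ using Lemmas~\ref{l:nthroot}, \ref{l:embeddingR}, and~\ref{l:exactR}. Your case-by-case identification of $\mu$ is in fact more careful than the paper's terse statement (which says only ``$\mu\simeq\mu_t^{\vare}$ or $R^{(1)}_{l/k}(\mu_t)$'' and cites Lemmas~\ref{l:nthroot} and~\ref{l:exactR}); in particular you correctly single out ${}^2\mathrm{D}_r$ with $r$ even, where $\mu\simeq R_{l/k}(\mu_2)$ and Lemma~\ref{l:embeddingR} is the relevant input, and you note that the coprime branch of Lemma~\ref{l:exactR} applies to the (possibly non-Galois) cubic $l$ in the ${}^6\mathrm{D}_4$ case. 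One small wording caveat: when you write ``all cohomologies here are ordinary Galois cohomologies,'' this is true for $\gcal$ and $\adgcal$ but not necessarily for $\mu$ (e.g.\ $\mu_{r+1}$ when $p\mid r+1$); the argument is unaffected since the cited lemmas are proved in flat cohomology and the long exact sequence holds there.
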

\begin{proof}
Since $\bbg$ is a $k$-inner form of $\gcal$, their centers are $k$-isomorphic. Hence we have the following short exact sequence:
\[
1\rightarrow \mu\rightarrow \gcal \rightarrow \adgcal\rightarrow 1,
\]
and either $\mu$ is $k$-isomorphic to $\mu_t^{\vare}$ or $R^{(1)}_{l/k}(\mu_t)$, where $\vare=\vare(\bbg)$ and $t=t(\bbg)$. From the above short exact sequence, one can conclude the following diagram (each row is an exact sequence):
\begin{equation}\label{e:long}
\begin{array}{ccccc}
H^1(k,\gcal)&\rightarrow &H^1(k,\adgcal)&\xrightarrow{\delta} &H^2(k,\mu)\\
\downarrow&&\downarrow&&\downarrow\\
\prod_{\pfr\in V_k}H^1(k_{\pfr},\gcal)&\rightarrow&\prod_{\pfr\in V_k}H^1(k_{\pfr},\adgcal)&\rightarrow&\prod_{\pfr\in V_k}H^2(k_{\pfr},\mu)
\end{array}
\end{equation}
By \cite{BrTi, Ha}, we know that $H^1(k,\gcal)=1$ and $H^1(k_{\pfr},\gcal)=1$, for any $\pfr$. Furthermore, by Lemma \ref{l:nthroot} and Lemma \ref{l:exactR}, the last vertical map is injective. Thus following arrows in the diagram \ref{e:long}, we have that $\delta(\mathfrak{c}_1)=\delta(\mathfrak{c}_2)$ if $\phi(\mathfrak{c}_1)=\phi(\mathfrak{c}_2)$ for $\mathfrak{c}_1$ and $\mathfrak{c}_2$ in $H^1(k,\adgcal)$. So in order to show that $\phi$ is injective, it is enough to show that $\delta$ the boundary map is injective. To this end, we shall use the trick of twisting by a cocycle. Namely for a given $\mathfrak{c}$ in $H^1(k,\adgcal)$, we consider the following short exact sequence
\[
1\rightarrow \mu\rightarrow \gcal_{\mathfrak{c}} \rightarrow \adgcal_{\mathfrak{c}} \rightarrow 1.
\]
Since $\gcal_{\mathfrak{c}}$ is again simply connected $k$-group, by \cite{BrTi, Ha}, the fiber of $\delta_{\mathfrak{c}}$ over the trivial element is trivial. Hence, by the trick of twisting (see~\cite[Chapter IV, Propositon 4.3.4]{G71}), the fiber of $\delta$ over $\delta(\mathfrak{c})$ has only one element .
\end{proof}

\noindent
For a given $k$ and $l$, any admissible $\bbg$ is a $k$-inner form of $\gcal$. Hence there is an element in the image of $H^1(k,\adgcal)$ in $H^1(k,{\rm Aut}(\gcal))$ (these are Galois cohomologies and since $\adgcal$ is a smooth $k$-group scheme, the first one is naturally isomorphic to the flat cohomolgy)  which corresponds to $\bbg$. By the main inequality, Theorem~\ref{t:MainInequality}, we know that
\[
\prod_{\pfr\in T_c} q_{\pfr}\le x^{c_{12}}
\]
where $T_c$ is the set of places over which $\bbg$ is not quasi-split and $c_{12}$ just depends on $G$.
This is equivalent to saying that 
\[
\deg(D(T_c)) \le c_{12} \log(x),
\]
where $D(T_c)=\sum_{\pfr\in T_c} \pfr$. The next lemma gives us an upper bound on the number of possibilities of effective divisors, i.e. a divisor $\sum_{\pfr\in V_k} a_{\pfr} \pfr$ with non-negative coefficients, with a given upper bound on their degree.
\begin{lem}\label{l:DivDeg}
Let $k$ be any global function field; then, for any $y$,
\[
\#\{D\in \Div^+(k) |\h \deg(D)\le y\} \le 4 h_k q^y,
\]
where $\Div^+(k)$ is the set of all the effective divisors of $k$, $q=q_k$ is the number of elements of the constant field of $k$, and $h_k$ is the class number of $k$.
\end{lem}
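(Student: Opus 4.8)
We want to bound the number of effective divisors $D \in \Div^+(k)$ of degree at most $y$ by $4 h_k q^y$, where $q = q_k$ and $h_k$ is the class number of $k$.

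**Approach.** The plan is to organize effective divisors according to the divisor class they represent in the Picard group $\Cl(k)$, and within each class count via the Riemann–Roch theorem. Recall that the degree map $\deg\colon \Div(k) \to \bbz$ is surjective onto $\bbz$ (equivalently, since $k$ has a place $\pfr_0$ with residue degree dividing that of $K$, but more robustly by F.K. Schmidt's theorem the degree map is surjective), and its kernel modulo principal divisors is the finite group $\Cl^0(k)$ of order $h_k$. So for each integer $d$ with $0 \le d \le y$, the number of effective divisors of degree exactly $d$ is $\sum_{[c]} \#\{D \ge 0 : D \sim c,\ \deg c = d\}$, the sum running over the $h_k$ classes of degree $d$. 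For a fixed class of a divisor $c$ of degree $d$, the effective divisors linearly equivalent to $c$ are parametrized by $\mathbb{P}(H^0(k,c))$, so their number is $(q^{\ell(c)} - 1)/(q-1)$ where $\ell(c) = \dim_{\bbf_q} H^0(k,c)$.

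**Key steps.** First I would invoke Riemann–Roch: $\ell(c) = d - g_k + 1 + \ell(\mathcal{K} - c)$, where $g_k$ is the genus and $\mathcal{K}$ a canonical divisor of degree $2g_k - 2$. For $d \ge 2g_k - 1$ the correction term vanishes and $\ell(c) = d - g_k + 1$; for $0 \le d \le 2g_k - 2$ we have the crude bound $\ell(c) \le d+1$ (indeed $\ell(c) \le d - g_k + 1 + g_k = d+1$, or simply $\ell(c) \le d$ unless $c$ is special, but $\ell(c) \le d+1$ suffices everywhere). In either case $\ell(c) \le d+1 \le d - g_k + 1 + g_k$, and more usefully $(q^{\ell(c)}-1)/(q-1) \le q^{\ell(c)-1} \cdot \frac{q}{q-1} \le 2 q^{\ell(c)-1} \le 2 q^{d}$ when $\ell(c) \le d+1$ (using $q \ge 2$ so $q/(q-1)\le 2$). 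Summing over the $h_k$ classes of degree $d$ gives at most $2 h_k q^d$ effective divisors of degree exactly $d$; summing the geometric series over $0 \le d \le y$ gives $2 h_k \sum_{d=0}^{\lfloor y\rfloor} q^d \le 2 h_k \cdot \frac{q^{\lfloor y \rfloor + 1}}{q-1} \le 4 h_k q^y$, again using $q/(q-1) \le 2$ and $q^{\lfloor y\rfloor} \le q^y$. This yields the claimed bound.

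**Main obstacle.** There is no deep obstacle here; the only points requiring care are (i) making sure the degree map is surjective so that every residue class $0 \le d \le y$ actually contains $h_k$ divisor classes (this is F.K. Schmidt's theorem for global function fields, or follows in our situation from the existence of $\pfr_0$), and (ii) handling the range $d \le 2g_k - 2$ where Riemann–Roch does not pin down $\ell(c)$ exactly — there one falls back on the universal bound $\ell(c) \le d + 1$, which is enough since we only need an upper bound of the shape $q^d$ per class. The constant $4$ is slack enough to absorb the $q/(q-1)$ factors for all $q \ge 2$, so no optimization is needed.
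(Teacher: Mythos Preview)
Your argument is correct and essentially identical to the paper's: both partition effective divisors by divisor class, use Riemann--Roch to obtain $\ell(c)\le d+1$ (via $\ell(\mathcal{K}-c)\le \ell(\mathcal{K})=g_k$ once an effective representative of the class is chosen), bound each class's contribution by $2q^d$, and sum the geometric series. Your flagged ``main obstacle'' about surjectivity of the degree map is in fact unnecessary for the upper bound, since each $\Cl^d(k)$ is either empty or a $\Cl^0(k)$-coset and hence has at most $h_k$ elements regardless.
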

\begin{proof}
By Riemann-Roch theorem~\cite{Ro}, there is a divisor $C$ such that for any divisor $D$,
\[
l(D)=\deg(D)-g+1+l(C-D),
\]
where $l(D)=\dim_{\f_k}\{x\in k^{\times}| (x)+D\ge 0\}\cup\{0\}$. Moreover $l(C)=g$ and $\deg(C)=2g-2$. As a corollary of Riemann-Roch theorem~\cite{Ro}, one can show that for any non-negative integer number $N$, there are $h=h_k$ effective divisors $\{D_1,\cdots, D_h\}$ of degree $N$ such that  
\[
b_N=\#\{D\in \Div^+(k)| \deg(D)=N\}=\sum_{i=1}^{h_k} \frac{q^{l(D_i)}-1}{q-1},
\]
where $q=q_k$. On the other hand, $l(C-D_i)\le l(C)=g$ as $D_i$ is an effective divisor. Hence
\[
b_N\le h\h\frac{q^{N+1}-1}{q-1}\le 2 h q^N. 
\]
Thus
\[
\#\{D\in \Div^+(k) |\h \deg(D)\le y\} \le 2h\sum_{N=0}^{y} q^N \le 4 h q^y, 
\]
which finishes the argument.
\end{proof}
\noindent
Hence by the above argument and Lemma~\ref{l:DivDeg}, for a given admissible $k$ and $l$, the number of possible sets for $T_c$ is at most $4 h_k x^{c_{12}}$. On the other hand, by Weil's theorem on Riemann hypothesis for curves over finite fields, we have
\[
h_k\le (\sqrt{q}+1)^{2g}\le q^{2g},
\]
and, by the discussions in \ref{ss:k-l}, we have that for an admissible $k$, $q^g\le x^{c_{13}}$ where $c_{13}$ just depends on $G$. Thus for a given admissible pair of function fields $(k,l)$, the number of possible $T_c$'s is at most $x^{c_{14}}$, where $c_{14}$ just depends on $G$.
\\

\noindent
Let $\mathfrak{c}$ be the element in $H^1(k,\adgcal)$ which gives us $\bbg$. By Theorem~\ref{t:adjointembedd}, it is enough to give an upper bound for the number of possible elements in $\prod_{\pfr\in V_k} H^1(k_{\pfr},\adgcal)$ for $\phi(\mathfrak{c})$. Since $\bbg$ is quasi-split over any place not in $T_c$, any admissible element in $\prod_{\pfr\in V_k} H^1(k_{\pfr},\adgcal)$ is non-trivial only in the $T_c$ components. On the other hand, $H^1(k_{\pfr},\adgcal)$ can be embedded into $H^2(k_{\pfr},\mu)$ and the latter can be embedded into a direct product of at most $2\vare(\gcal)$ torsion quotients of $\bbq/\bbz$, where each quotient has exponent at most $t=t(\gcal)$.
One gets such an embedding as the second flat cohomology of $\bbg_m$ is isomorphic to the Brauer group and the Brauer group of a non-archimedian local  field is isomorphic to $\bbq/\bbz$. Therefore, for any $\pfr$,
\[
\#H^1(k_{\pfr},\adgcal)\le c_{15},
\]
where $c_{15}$ just depends on $G$. As $\# T_c\le c_{12} \log x$, overall we have that, for a given admissible $k$ and $l$, the number of possible $\phi(\mathfrak{c})$ and therefore the number of $\bbg$ admissible $k$-forms of $\gcal$  is at most $x^{c_{16}}$, where $c_{16}$ just depends on $G$.
\subsection{Number of possible $\{P_{\pfr}\}$'s up to $\adbbg(\bba_k)$.}\label{ss:Type}
We have already given an upper bound on the number of possible $k$, $l$, and $k$-forms $\bbg$. Now, we will fix such $\bbg$, and count the number of possibilities of $\{P_{\pfr}\}$, a coherent family of parahoric subgroups. To this end, first we will give an upper bound on the number of $\Theta$ admissible types up to the action of $\adbbg(\bba_k)$, the adjoint group on local Dynkin diagrams, and then, in the next section, provide an upper bound on the class number of $\adbbg$ with respect to a coherent family of parahoric subgroups with a given admissible type.
\\

\noindent
By the main inequality, we know that 
\[
\deg(D(T_l))\le c_{17} \log_q x,
\]
where $D(T_l)=\sum_{\pfr\in T_l} \pfr$ and $c_{17}$ just depends on $G$. Hence, by Lemma~\ref{l:DivDeg}, the number of possible sets for $T_l$ is at most $4h_k x^{c_{17}}$. Thus again, by using Weil's theorem on Riemann hypothesis for curves over finite fields, we have that the number of possible sets for $T_l$ is at most $x^{c_{18}}$, where $c_{18}$ just depends on $G$. Hence, without loss of generality,  we can and we will fix $T_l$, the set of primes ramified at the local level. 
\\

\noindent
The adjoint group acts transitively on the set of hyper-special vertices. So if $\pfr$ is not in $T_l$, then either it is ramified over $l$ or $\Theta_{\pfr}$ is unique up to $\adbbg(k_{\pfr})$. Again, by the main inequality, $\# T_l\le c_{17} \log_q x$, and, on the other hand, the number of possible types for any given $\pfr$ is bounded by a constant depending only on $G$. Thus the number of possible $\Theta$'s up to $\adbbg(\bba_k)$ is at most $x^{c_{19}}$.
\subsection{Number of possible $\{P_{\pfr}\}$.}
By the discussion in~\ref{ss:Type}, we can and will fix $\{P_{\pfr}\}_{\pfr\in V^{\circ}_k}$ a coherent family of parahoric subgroups up to an element of $\adbbg(\bba_k)$. Here we will give an upper bound on the number of admissible $\{P'_{\pfr}\}_{\pfr\in V^{\circ}_k}$ within the $\adbbg(\bba_k)$-orbit  of $\{P_{\pfr}\}_{\pfr\in V^{\circ}_k}$ such that the corresponding lattices in $\bbg(k_{\pfr_0})$ are not conjugate of each other.
\\

\noindent
 Let $\overline{P}_{\pfr}$ be the stabilizer of $P_{\pfr}$ in $\adbbg(k_{\pfr})$, 
\[
\Cl(\adbbg,\{\overline{P}_{\pfr}\}_{\pfr\in V^{\circ}_k})=\adbbg(\bba_k)/\adbbg(k)\cdot \adbbg(k_{\pfr_0})\prod_{\pfr\in V^{\circ}_k} \overline{P}_{\pfr}
\]
 the class group of $\adbbg$ with respect $\{\overline{P}_{\pfr}\}_{\pfr\in V^{\circ}_k}$, and $\cl(\adbbg,\{\overline{P}_{\pfr}\})=\#\Cl(\adbbg,\{\overline{P}_{\pfr}\})$ the class number of $\adbbg$ with respect $\{\overline{P}_{\pfr}\}_{\pfr\in V^{\circ}_k}$. It is well-known that there is a correspondence between the double cosets of $\adbbg(k)$ and $\adbbg(k_{\pfr_0})\prod_{\pfr\in V^{\circ}_k} \overline{P}_{\pfr}$ in $\adbbg(\bba_k)$ and $\Cl(\adbbg,\{\overline{P}_{\pfr}\}_{\pfr\in V^{\circ}_k})$. Let $\pi$ be the projection from $\adbbg(\bba_k)$ onto 
 \[
 \adbbg(k)\setminus\adbbg(\bba_k)/ \adbbg(k_{\pfr_0})\prod_{\pfr\in V^{\circ}_k} \overline{P}_{\pfr}.
\]
  
\begin{lem}\label{l:Type}
Let $g^{(1)}, g^{(2)}\in \adbbg(\bba_k)$ such that $\pi(g^{(1)})=\pi(g^{(2)})$,
\[
\prod_{\pfr\in V^{\circ}_k}P^{(i)}_{\pfr}=g^{(i)}(\prod_{\pfr\in V^{\circ}_k}P_{\pfr}),
\]
and $\Lambda^{(i)}=\bbg(k)\cap \prod_{\pfr\in V^{\circ}_k}P^{(i)}_{\pfr}$ for $i=1,2$; then $\Lambda^{(1)}$ and $\Lambda^{(2)}$ are the same up to an element of $\adbbg(k)$.
\end{lem}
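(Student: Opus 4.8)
The plan is to unwind the equality $\pi(g^{(1)})=\pi(g^{(2)})$ in the double-coset space and then track its effect on the parahoric families one place at a time. Since $\pi$ is the projection onto $\adbbg(k)\setminus\adbbg(\bba_k)/\adbbg(k_{\pfr_0})\prod_{\pfr\in V^{\circ}_k}\overline{P}_{\pfr}$, the hypothesis lets me write $g^{(2)}=\gamma\,g^{(1)}\,h\,p$, where $\gamma\in\adbbg(k)$ is embedded diagonally, $h\in\adbbg(k_{\pfr_0})$ is supported at $\pfr_0$, and $p=(p_{\pfr})_{\pfr\in V^{\circ}_k}\in\prod_{\pfr\in V^{\circ}_k}\overline{P}_{\pfr}$. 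The point of separating $h$ from $p$ is that for every place $\pfr\in V^{\circ}_k$ the $\pfr$-component of this identity reads $g^{(2)}_{\pfr}=\gamma\,g^{(1)}_{\pfr}\,p_{\pfr}$, since $h$ contributes nothing away from $\pfr_0$.

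Next I would compute $P^{(2)}_{\pfr}=g^{(2)}_{\pfr}\cdot P_{\pfr}$, using that $\adbbg(k_{\pfr})$ acts (on the left) on the set of parahoric subgroups of $\bbg(k_{\pfr})$ through its action on $\bbg$, and that $p_{\pfr}$, lying by definition in the stabilizer $\overline{P}_{\pfr}$ of $P_{\pfr}$, fixes $P_{\pfr}$. This gives $P^{(2)}_{\pfr}=\gamma\cdot\bigl(g^{(1)}_{\pfr}\cdot(p_{\pfr}\cdot P_{\pfr})\bigr)=\gamma\cdot(g^{(1)}_{\pfr}\cdot P_{\pfr})=\gamma\cdot P^{(1)}_{\pfr}$, with the \emph{same} $\gamma$ occurring at every place. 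Hence $\prod_{\pfr\in V^{\circ}_k}P^{(2)}_{\pfr}=\gamma\cdot\bigl(\prod_{\pfr\in V^{\circ}_k}P^{(1)}_{\pfr}\bigr)$ inside $\bbg(\bba_k)$.

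Finally, since $\gamma\in\adbbg(k)$ acts on $\bbg$ by an automorphism defined over $k$, it carries $\bbg(k)$ onto $\bbg(k)$ and this action is compatible with the corresponding action on $\bbg(k_{\pfr})$ for every $\pfr$; therefore intersecting the previous identity with $\bbg(k)$ commutes with the action of $\gamma$, giving $\Lambda^{(2)}=\bbg(k)\cap\prod_{\pfr\in V^{\circ}_k}P^{(2)}_{\pfr}=\gamma\cdot\bigl(\bbg(k)\cap\prod_{\pfr\in V^{\circ}_k}P^{(1)}_{\pfr}\bigr)=\gamma\cdot\Lambda^{(1)}$, which is exactly the assertion. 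I do not expect a genuine obstacle here; the only point requiring care is the bookkeeping of the several actions involved — that the local actions of $\adbbg(k_{\pfr})$ on parahorics are compatible with the diagonal $\adbbg(k)$-action on all local and global points of $\bbg$, and that ``the same up to an element of $\adbbg(k)$'' in the statement refers precisely to this action, which in general does not come from conjugation by an element of $\bbg(k)$.
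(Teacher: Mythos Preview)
Your argument is correct and follows essentially the same route as the paper: unwind the double-coset equality to write $g^{(2)}=\gamma\,g^{(1)}\,g$ with $\gamma\in\adbbg(k)$ and $g\in\adbbg(k_{\pfr_0})\prod_{\pfr\in V^{\circ}_k}\overline{P}_{\pfr}$, observe that $g$ fixes $\prod_{\pfr\in V^{\circ}_k}P_{\pfr}$, and conclude $\Lambda^{(2)}=\gamma\cdot\Lambda^{(1)}$. Your version simply spells out in more detail why the $\overline{P}_{\pfr}$- and $\pfr_0$-components drop out and why the $k$-rational automorphism $\gamma$ commutes with intersecting against $\bbg(k)$.
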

\begin{proof}
Since $\pi(g^{(1)})=\pi(g^{(2)})$, there are $g_k\in \adbbg(k)$ and $g\in \adbbg(k_{\pfr_0})\prod_{\pfr\in V^{\circ}_k} \overline{P}_{\pfr}$ such that $g^{(2)}=g_k g^{(1)} g$. By the definition, 
\[
g^{(2)}(\prod_{\pfr\in V^{\circ}_k}P_{\pfr})=g_k  g^{(1)}(\prod_{\pfr\in V^{\circ}_k}P_{\pfr}),
\]
and so $g_k(\Lambda^{(1)})=\Lambda^{(2)}$, as we claimed.
\end{proof}
\noindent
As we would like to count number of maximal lattices up to an automorphism of $G$, by the above comments and Lemma~\ref{l:Type}, it is enough to give an upper bound on $\cl(\adbbg,\{\overline{P}_{\pfr}\})$. 

\begin{thm}\label{t:class}
In the above setting, there is a constant $c$ depending only on $G$, such that
\[
\cl(\adbbg,\{\overline{P}_{\pfr}\})\le x^c.
\]
\end{thm}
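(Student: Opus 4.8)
The plan is to reduce $\cl(\adbbg,\{\overline{P}_{\pfr}\})$ to a Selmer-type quotient of the flat cohomology of the center $\mu$ of $\bbg$ and then to estimate that quotient place by place. Write $U=\adbbg(k_{\pfr_0})\prod_{\pfr\in V^{\circ}_k}\overline{P}_{\pfr}$, so that $\cl(\adbbg,\{\overline{P}_{\pfr}\})=\#\big(\adbbg(k)\backslash\adbbg(\bba_k)/U\big)$. Starting from the central exact sequence $1\to\mu\to\bbg\to\adbbg\to1$ and using $H^1(k,\bbg)=1$ together with $H^1(k_{\pfr},\bbg)=1$ for every $\pfr$ (Harder, Bruhat--Tits, as already invoked for Theorem~\ref{t:adjointembedd}) and strong approximation for $\bbg$ --- valid since $\bbg(k_{\pfr_0})=G$ is noncompact, the $K$-rank being at least $2$, so $\bbg$ has class number one --- the coboundary maps $\adbbg(k)\to H^1(k,\mu)$ and $\adbbg(k_{\pfr})\to H^1(k_{\pfr},\mu)$ become surjective, and a standard (but not entirely formal) passage to cohomology gives
\[
\cl(\adbbg,\{\overline{P}_{\pfr}\})\le\#\Sigma,\qquad \Sigma:=\operatorname{coker}\Big(H^1(k,\mu)\to\bigoplus_{\pfr\in V^{\circ}_k}H^1(k_{\pfr},\mu)\big/\delta_{\pfr}(\overline{P}_{\pfr})\Big),
\]
where $\delta_{\pfr}$ is the local coboundary; the sum is a direct sum because $\delta_{\pfr}(\overline{P}_{\pfr})$ is an open, hence finite-index, subgroup of $H^1(k_{\pfr},\mu)$, equal to its unramified part for all but finitely many $\pfr$. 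Here all cohomology is flat cohomology: this matters only for a form of type $\A_r$ with $p\mid t(\bbg)=r+1$, where $\mu\cong\mu_{t}$ or $R^{(1)}_{l/k}(\mu_{t})$ is not smooth, and there I would rely on Theorem~\ref{t:cohom} (Shapiro's lemma for flat cohomology) and Lemmas~\ref{l:nthroot}, \ref{l:embeddingR}, \ref{l:exactR}; in all other types $p\nmid t$, $\mu$ is \'etale, and one is in the usual Galois-cohomological setting.

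Next I would split $\Sigma$ into good and bad primes. Let $S$ consist of $\pfr_0$, the ramified primes $\mathfrak{R}(\Gamma)=T_c\cup T_l$ of Definition~\ref{d:Ramified}, and the primes ramifying in $l/k$; call $\pfr\notin S$ good. For a good $\pfr$ the groups $\bbg$ and $\mu$ are unramified and $\overline{P}_{\pfr}$ is hyper-special, so $\delta_{\pfr}(\overline{P}_{\pfr})$ is exactly the unramified subgroup and $H^1(k_{\pfr},\mu)/\delta_{\pfr}(\overline{P}_{\pfr})$ is cyclic of order dividing $t$, detected by the valuation (for outer forms, after restriction to $l$ via Shapiro's lemma). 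Hence $\#\Sigma$ is at most the order of $\operatorname{coker}\big(H^1(k,\mu)\to\bigoplus_{\pfr\,\mathrm{good}}H^1(k_{\pfr},\mu)/\delta_{\pfr}(\overline{P}_{\pfr})\big)$ times the product of the orders of the bad local quotients. The first cokernel is, via the structure of $\mu$ and Shapiro's lemma, a subquotient of a torsion quotient of the Picard group of $l$ (of $k$ when $\gcal$ is $k$-split), hence of order at most $h_l^{c(G)}$; by the Riemann hypothesis for curves over finite fields $h_l\le(\sqrt{q_l}+1)^{2g_l}\le q_l^{2g_l}$, and by the genus estimate of \S\ref{ss:k-l} (a consequence of the Main Inequality, Theorem~\ref{t:MainInequality}) one has $q_l^{g_l}\le x^{c(G)}$, so the good-prime contribution is at most $x^{c(G)}$.

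For the bad primes it remains to bound $\prod_{\pfr\in S\setminus\{\pfr_0\}}[H^1(k_{\pfr},\mu):\delta_{\pfr}(\overline{P}_{\pfr})]$, the $\pfr_0$-factor being trivial since $\adbbg(k_{\pfr_0})$ surjects onto $H^1(k_{\pfr_0},\mu)$. There are few bad primes: $\#\mathfrak{R}(\Gamma)\le c(G)\log x$ since $\prod_{\pfr\in\mathfrak{R}(\Gamma)}q_{\pfr}^{\sigma}\le x/c$ by the Main Inequality (and $q_{\pfr}\ge q_k\ge p\ge5$), while the number of primes ramifying in $l/k$ is at most $\deg\mathfrak{d}_{l/k}\le 2g_l+O(1)\le c(G)\log x$ by Riemann--Hurwitz and the genus bound. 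At each bad prime $[H^1(k_{\pfr},\mu):\delta_{\pfr}(\overline{P}_{\pfr})]\le q_{\pfr}^{c(G)}$: when $p\nmid t$ this is immediate because $\#H^1(k_{\pfr},\mu)$ is bounded in terms of the absolute type (as in the count of admissible $\bbg$, using $\mathrm{Br}(k_{\pfr})\cong\bbq/\bbz$); when $p\mid t$ one bounds the index of the open subgroup $\delta_{\pfr}(\overline{P}_{\pfr})$ of $H^1(k_{\pfr},\mu)$ in terms of $G$ and of the ramification of $\mu$ over $\pfr$. Since $\prod_{\pfr\in\mathfrak{R}(\Gamma)}q_{\pfr}\le x^{c(G)}$ and likewise for the primes ramifying in $l/k$, the bad-prime contribution is at most $c(G)^{c(G)\log x}\cdot x^{c(G)}=x^{c(G)}$. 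Combining with the good-prime bound yields $\cl(\adbbg,\{\overline{P}_{\pfr}\})\le x^{c}$ for some $c=c(G)$.

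The main obstacle is the first reduction: since $\adbbg$ is non-abelian its double cosets do not form a group, so identifying $\cl(\adbbg,\{\overline{P}_{\pfr}\})$ with $\#\Sigma$ genuinely requires strong approximation for $\bbg$ together with the vanishing of $H^1(k,\bbg)$ and of all $H^1(k_{\pfr},\bbg)$, carefully combined; and in positive characteristic, for type $\A_r$ with $p\mid r+1$, $\mu$ is a non-smooth group scheme, so one must work with flat cohomology throughout and replace the usual Galois-cohomological facts by Theorem~\ref{t:cohom} and Lemmas~\ref{l:nthroot}--\ref{l:exactR}. A secondary technical point is making $\delta_{\pfr}(\overline{P}_{\pfr})$ explicit enough at the bad primes to obtain the bound $q_{\pfr}^{c(G)}$, and ensuring that the global good-prime contribution is governed by $h_l$ rather than $h_k$ when $\bbg$ is an outer form.
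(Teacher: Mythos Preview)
Your approach is essentially the paper's: use strong approximation for $\bbg$ together with $H^1(k,\bbg)=H^1(k_{\pfr},\bbg)=1$ to bound $\cl(\adbbg,\{\overline{P}_{\pfr}\})$ by the cokernel of $H^1(k,\mu)\to\bigoplus_{\pfr}H^1(k_{\pfr},\mu)/(\text{local subgroup})$, then control the good-prime part by the class number and the bad-prime part by the Main Inequality. The one refinement in the paper that tightens two of your loose steps is the choice of local subgroup: rather than working with $\delta_{\pfr}(\overline{P}_{\pfr})$ directly, the paper uses the smaller subgroup $\ker(\xi_{\pfr})$, where $\xi_{\pfr}:H^1(k_{\pfr},\mu)\to\Aut(\dcal_{\pfr})$ records the action on the local Dynkin diagram. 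Since any class in $\ker(\xi_{\pfr})$ fixes every type and hence lies in $\delta_{\pfr}(\overline{P}_{\pfr})$, this only enlarges the quotient $\Ccal$, but it buys two things. First, $H^1(k_{\pfr},\mu)/\ker(\xi_{\pfr})$ embeds in $\Aut(\dcal_{\pfr})$, giving a bound $O_G(1)$ at \emph{every} place; this handles the bad primes uniformly and, in particular, disposes of the case $p\mid t$ (type $\A_r$ with $p\mid r+1$), where $H^1(k_{\pfr},\mu)=k_{\pfr}^{\times}/(k_{\pfr}^{\times})^t$ is infinite and your ``open, hence finite-index'' reasoning does not apply. Second, at primes where $\bbg$ is split (inner case) one has the explicit description $\ker(\xi_{\pfr})=(\o_{\pfr}^{\times}k_{\pfr}^{\times t}/k_{\pfr}^{\times t})^{\vare}$ via \cite[Lemma~2.3, Proposition~2.7]{P}, so the good-prime cokernel is literally $(\Div(k)/(t\Div(k)+(k)))^{\vare}$, of order at most $(t\,h_k)^{\vare}$; the outer case is handled by the long exact sequence for $R^{(1)}_{l/k}(\mu_n)$ and yields the analogous bound with $h_l$, exactly as you anticipate.
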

\begin{proof}
By the strong approximation~\cite{Pstrong,Mstrong} for simply connected groups, we have 
\[
\bbg(\bba_k)=\bbg(k)\cdot\bbg(k_{\pfr_0})\prod_{\pfr\in V^{\circ}_k} P_{\pfr}.
\] 
Hence $\Cl(\adbbg,\{\overline{P}_{\pfr}\})$ is a quotient of 
\[
\adbbg(\bba_k)/(\adbbg(k) \Ad(\bbg(\bba_k)) \prod_{\pfr\in V_k} \overline{P}_{\pfr}).
\]
On the other hand, for any $\pfr$, we have that
\[
\adbbg(k_{\pfr})/\Ad(\bbg(k_{\pfr}))\simeq H^1(k_{\pfr},\mu).
\]
As before, let $\xi_{\pfr}$ be the homomorphism from $H^1(k_{\pfr},\mu)$ to $\Aut(\dcal_{\pfr})$. Then there is an onto map from $H^1(k_{\pfr},\mu)/\ker(\xi_{\pfr})$ to 
\[
\adbbg(k_{\pfr})/\Ad(\bbg(k_{\pfr}))\overline{P}_{\pfr}.
\]
So altogether $\Cl(\adbbg,\{\overline{P}_{\pfr}\})$ is a homomorphic image of
\[
\Ccal=(\oplus_{\pfr\in V_k} H^1(k_{\pfr},\mu)/\ker(\xi_{\pfr}))/\Delta(H^1(k,\mu)),
\]
where $\Delta$ is the natural diagonal homomorphism. 
\\

\noindent
First, we assume that $\bbg$ is an inner form. In this case, $\mu$ is isomorphic to $(\mu_t)^{\vare}$ as a $k$-group scheme, where $t=t(\gcal)$ and $\vare=\vare(\gcal)$. Hence $H^1(k_{\pfr},\mu)\simeq (k_{\pfr}^{\times}/{k_{\pfr}^{\times}}^{t})^{\vare}$ and  $H^1(k,\mu)\simeq (k^{\times}/{k^{\times}}^{t})^{\vare}$. If $\pfr$ is not in $T_c$, i.e. $\bbg$ is split over $k_{\pfr}$, then, by \cite[Lemma 2.3, Proposition 2.7]{P}, $\ker{\xi_{\pfr}}=({k_{\pfr}^{\times}}^{t}\o_{\pfr}^{\times}/{k_{\pfr}^{\times}}^{t})^{\vare}$. Hence a subgroup of index at most $c_{15}^{\# T_c}$ in $\Ccal$ is a homomorphic image of
\[
((\oplus_{\pfr\in V_k} k_{\pfr}^{\times}/ ({k_{\pfr}^{\times}}^{t}\o_{\pfr}^{\times}))/\Delta(k^{\times}/{k^{\times}}^t)^{\vare} \simeq (\Div(k)/(t \Div(k)+(k))))^{\vare},
\]
where $\Div(k)$ is the group of divisors of $k$ and $(k)$ is its subgroup of principal divisors. Since the latter has at most $h_k\le x^{c_{20}}$ elements and $\# T_c\le c_{12} \log x$, $\cl(\adbbg,\{\overline{P}_{\pfr}\})\le \# \Ccal$ is at most $x^{c_{21}}$, where all the constants just depend on $G$, which is the desired result.
\\

\noindent
Now, let us assume that $\bbg$ is an outer form. In this case, $\mu$ is isomorphic  to $\nu^{(1)}=R^{(1)}_{l/k}(\mu_n)$ as  a $k$-group scheme, where $R^{(1)}_{l/k}(\mu_n)$ is as in Lemma~\ref{l:exactR} and $n=t(\bbg)$ unless $\bbg$ is of type $D_r$ with $r$ even. Hence we have the following exact sequence
\[
\mu_n(l\otimes_k k_{\pfr})\xrightarrow{N_{l/k}}\mu_n(k_{\pfr})\rightarrow H^1(k_{\pfr},\mu) \rightarrow (l\otimes_k k_{\pfr})^{\times}/{(l\otimes_k k_{\pfr})^{\times}}^n\xrightarrow{N_{l/k}} k_{\pfr}^{\times}/{k_{\pfr}^{\times}}^n,
\]
for any $\pfr$ and a similar exact sequence for $k$ instead of $k_{\pfr}$. For any $\pfr$, $l\otimes_k k_{\pfr}\simeq \oplus_{\pfrl |\pfr} l_{\pfrl}$ and moreover again by \cite[Lemma 2.3, Proposition 2.7]{P}, if $\pfr$ is not ramified over $l$ and not in $T_c$, i.e. $\bbg$ is quasi-split over $k_{\pfr}$ and splits over $\widehat{k}_{\pfr}$, then the image of $\ker(\xi_{\pfr})$ contains the intersection of the kernel of the norm map and $\oplus_{\pfrl |\pfr} \o_{\pfrl}^{\times}{l_{\pfrl}^{\times}}^n/{l_{\pfrl}^{\times}}^n$. On the other hand, the following diagram is commutative
\[
\begin{array}{ccc}
\mu_n(k_{\pfr}^{\times})/N_{l/k}(\mu_n(l\otimes_k k_{\pfr}))& \rightarrow & H^1(k_{\pfr},\mu)\\
\downarrow&&\downarrow\\
\mu_n(\widehat{k}_{\pfr}^{\times})/N_{l/k}(\mu_n(l\otimes_k \widehat{k}_{\pfr}))& \rightarrow & H^1(\widehat{k}_{\pfr},\mu),
\end{array}
\]
and so if $\pfr$ is not ramified over $l$ and $\bbg$ is quasi-split over $\pfr$, then the image of $\mu_n(k_{\pfr}^{\times})/N_{l/k}(\mu_n(l\otimes_k k_{\pfr}))$ is in $\ker(\xi_{\pfr})$. 
\\

\noindent Form the given long exact sequence, one can get an exact sequence for $\Ccal$ and by the above discussions both the kernel and the cokernel have at most $x^{c_{22}}$ elements, where $c_{22}$ just depends on $G$, and so we get the desired result.
\end{proof}

\noindent
Theorem~\ref{t:class} completes the proof of the upper bound part of Theorem~\ref{t:maximal}. 
Indeed by Remark~\ref{r:dJK}, we also see that an affirmative answer to Question~\ref{q:Maximal} gives us a polynomial upper bound on the number of possible maximal lattices.
\subsection{A lower bound on the number of maximal lattices.}
By Tits' classification, there is $k$ a global function field, a place $\pfr_0$, and $\bbg$ a simply connected absolutely almost simple $k$-group, such that
\begin{itemize}
\item[1-] $k_{\pfr_0}\simeq K$.
\item[2-] $\bbg\simeq\bbg_0$ as $K$-groups after identifying $k_{\pfr_0}$ with $K$.
\end{itemize}
Let $\{P^m_{\pfr}\}_{\pfr\in V_k^{\circ}}$ be a coherent family of parahoric subgroups of maximum volume and $\{P'_{\pfr}\}_{\pfr\in V_k^{\circ}}$ be a family of parahoric subgroups of $\{\Theta'_{\pfr}\}_{\pfr\in V_k^{\circ}}$ such that
\begin{itemize}
\item[1-] If either $\bbg$ is not quasi-split over $k_{\pfr}$ or it does not split over $\widehat{k}_{\pfr}$, then $P'_{\pfr}=P^m_{\pfr}$. 
\item[2-] If $\bbg$ is not an inner form of type A, then $\Theta'_{\pfr}$ is a single vertex which is not hyper-special whenever $\bbg$ is quasi-split over $k_{\pfr}$ and splits over $\widehat{k}_{\pfr}$.
\item[3-] If $\bbg$ is an inner form of type $\A_r$, then we can and will assume that $k=\f_q(t)$ and $|f/g|_{\pfr_0}=q^{\deg(f)-\deg(g)}$. In this case,  whenever $\bbg$ splits over $k_{\pfr}$, its local Dynkin diagram over $k_{\pfr}$ is a cycle of length $r+1$ and $\adbbg(k_{\pfr})$ acts on it by rotations. Let $p$ be a prime factor of $r+1$ and $\Theta'_{\pfr}$ an orbit of rotation of length $(r+1)/p$. In particular, $\#\Theta'_{\pfr}=p$. 
\end{itemize}
For any $D$ a finite subset of $V^{\circ}_k$, let  
\[
P^D_{\pfr}=
\begin{cases}
P'_{\pfr}& \text{if $\pfr\in D$}\\
P_{\pfr}^m& \text{if $\pfr\in V_k^{\circ}\setminus D$}
\end{cases}
,\hspace{1cm}\Lambda_D=\bbg(k)\cap\prod_{\pfr\in V_k^{\circ}} P^D_{\pfr},
\]
and $\Gamma_D=N_G(\Lambda_D)$.
\begin{lem}\label{l:MaximalCondition}
In the above setting, $\Gamma_D$ is a maximal lattice in $G$ for any $D$ and moreover $\Gamma_D \cap \bbg(k)=\Lambda_D$.
\end{lem}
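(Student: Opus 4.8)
\emph{Plan.} The lemma has two parts. That $\Gamma_D=N_G(\Lambda_D)$ is a lattice with $[\Gamma_D:\Lambda_D]<\infty$ is part of the general picture recalled in Section~\ref{s:maximal} (Margulis' arithmeticity together with Rohlfs' structure theorem), so the substance is maximality, which I would establish by verifying the hypotheses of Rohlfs' maximality criterion \cite[Proposition~2.9]{BP}. The first point is that $\{P^D_{\pfr}\}_{\pfr\in V^{\circ}_k}$ is a coherent family: it coincides with the coherent family $\{P^m_{\pfr}\}$ outside the finite set $D$, and coherence only involves the requirement that $P_{\pfr}$ be (the suitable) hyper-special for all but finitely many $\pfr$. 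The remaining, main point is to check, at every $\pfr\in V^{\circ}_k$, the local condition of the criterion, which concerns the type $\Theta^D_{\pfr}$ together with the action of $\delta(\adbbg(k))$ on the local Dynkin diagram $\dcal_{\pfr}$.

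I would organise this into three cases. (i) If $\pfr\notin D$, or if $\pfr\in D$ but $\bbg$ is not quasi-split over $k_{\pfr}$ or does not split over $\widehat{k}_{\pfr}$, then $P^D_{\pfr}=P^m_{\pfr}$; being of maximum volume it is a \emph{maximal} parahoric subgroup (volume is strictly monotone under inclusion of parahoric subgroups), so the local condition holds trivially. (ii) If $\pfr\in D$, $\bbg$ is quasi-split over $k_{\pfr}$ and splits over $\widehat{k}_{\pfr}$, and $\bbg$ is not an inner form of type $\A$, then $\Theta'_{\pfr}$ is a single vertex of $\dcal_{\pfr}$, i.e. $P'_{\pfr}$ is again a maximal parahoric, and again the condition is immediate. (iii) If $\bbg$ is an inner form of type $\A_r$ and $\pfr\in D$, so $\bbg$ splits over $k_{\pfr}$ and $\dcal_{\pfr}$ is an $(r+1)$-cycle on which $\adbbg(k_{\pfr})$ acts by all rotations, then $\Theta'_{\pfr}$ is the orbit, of size $p$, of a rotation subgroup of prime order $p$. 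Here I would use that, $k=\f_q(t)$ having class number one and $\pfr_0$ being its place at infinity, the valuations at the remaining (finite) places may be prescribed independently; analysing $\delta(\adbbg(k))$ via the reduced norm (surjective onto $k^{\times}$ since $k$ is a function field) and the valuation maps, one sees that the rotation orbit of prime length is exactly the type making $N_G(\Lambda_D)$ maximal, and is also what keeps $[\Gamma_D:\Lambda_D]$, hence $\vol(G/\Gamma_D)$, of the expected order of magnitude. The primality of $p$ enters because such an orbit then has no proper non-empty subset invariant under the relevant rotation subgroup.

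For the equality $\Gamma_D\cap\bbg(k)=\Lambda_D$: the inclusion $\supseteq$ is clear, since $\Lambda_D\trianglelefteq\Gamma_D$ and $\Lambda_D\subseteq\bbg(k)$. For $\subseteq$ I would argue by strong approximation \cite{Pstrong,Mstrong}: $\bbg$ being simply connected and $G=\bbg_0(K)$ non-compact, $\bbg(k)$ is dense in $\prod'_{\pfr\in V^{\circ}_k}\bbg(k_{\pfr})$, so $\Lambda_D$ is dense in the open compact subgroup $\prod_{\pfr\in V^{\circ}_k}P^D_{\pfr}$. If $\gamma\in\bbg(k)$ normalizes $\Lambda_D$ it normalizes this closure, hence each $P^D_{\pfr}$; since parahoric subgroups of a simply connected group are self-normalizing (for such groups the stabilizer of a facet in the Bruhat--Tits building coincides with the associated parahoric, and a parahoric determines its facet), we get $\gamma\in P^D_{\pfr}$ for all $\pfr\in V^{\circ}_k$, i.e. $\gamma\in\Lambda_D$. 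Alternatively, once maximality is in hand, the same equality follows from the exact sequence of Section~\ref{s:maximal} together with $\mu(k_{\pfr_0})\cap\bbg(k)=\mu(k)$. I expect case (iii) of the maximality argument to be the main obstacle: it requires making the relation between the global adjoint group $\adbbg(k)$ and the simultaneous action on all local Dynkin diagrams precise enough to conclude, and this is exactly where the hypotheses ``$k=\f_q(t)$'' and ``$\Theta'_{\pfr}$ a rotation orbit of prime length'' are used.
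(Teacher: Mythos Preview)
Your plan is essentially sound and closely parallels the paper's argument, but the paper organises the proof differently and is more concrete precisely at the point you flag as the main obstacle.

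Rather than verifying a list of local hypotheses in Rohlfs' criterion directly, the paper argues by comparison: take a maximal lattice $\Gamma\supseteq\Gamma_D$, extract its associated principal lattice $\Lambda=\Gamma\cap\bbg(k)$ and coherent family $\{P_{\pfr}\}$ via \cite[Proposition~2.9]{BP}, and use strong approximation to deduce $P^D_{\pfr}\subseteq P_{\pfr}$ for every $\pfr$. Cases (i) and (ii) are then disposed of exactly as you say (the $P^D_{\pfr}$ are already maximal parahorics). For case~(iii) the paper exhibits a concrete witness: since $\Gamma_D\subseteq\Gamma$, the Rohlfs exact sequence forces $\delta(\adbbg(k))_{\Theta^D}\subseteq\delta(\adbbg(k))_{\Theta^{\circ}}$; but if $P^D_{\pfr}\subsetneq P_{\pfr}$ at some split $\pfr$, the class of the uniformizer power $\pfr^{(r+1)/p}$ in $H^1(k,\mu)\simeq k^{\times}/(k^{\times})^{r+1}$ (here using Harder's $H^1(k,\bbg)=1$ so that $\delta$ is surjective) acts by a rotation of order $p$ at $\pfr$ and trivially elsewhere, hence stabilises $\Theta^D$ but not the proper subset $\Theta_{\pfr}\subsetneq\Theta^D_{\pfr}$. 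This is exactly your ``valuations may be prescribed independently'' observation made explicit; note that the relevant map is the Kummer boundary $\delta$, not the reduced norm. The primality of $p$ enters just as you say: a rotation of prime order $p$ fixes no proper non-empty subset of a size-$p$ orbit.

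A pleasant by-product of the paper's comparison approach is that it yields $\Gamma_D\cap\bbg(k)=\Lambda_D$ for free: once $P^D_{\pfr}=P_{\pfr}$ for all $\pfr$ one has $\Lambda=\Lambda_D$, hence $\Gamma=\Gamma_D$ and $\Gamma_D\cap\bbg(k)=\Lambda$. Your separate argument via strong approximation and the self-normalising property of parahorics in simply connected groups is also correct and is a clean alternative.
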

\begin{proof}
Let $\Gamma$ be a maximal lattice which contains $\Gamma_D$ and $\Lambda=\Gamma\cap \bbg(k)$. Then by Rohlfs' maximality criterion (for the treatment in the case of positive characteristic see~\cite[Section2]{BP}), there is $\{P_{\pfr}\}$ a coherent family of parahoric subgroups such that
\[
\Lambda=\bbg(k)\cap \prod_{\pfr\in V_k^{\circ}} P_{\pfr},
\]
and $\Gamma=N_G(\Lambda)$. Thus $\Lambda_D\subseteq \Lambda$ and so $P^D_{\pfr}\subseteq P_{\pfr}$, for any $\pfr\in V_k^{\circ}$. If $\bbg$ is not an inner form of type A, then $P^D_{\pfr}$ is a maximal parahoric subgroup, and so we are done. So let us assume that $\bbg$ is an inner form of type $\A_r$ and $P^D_{\pfr}$ is a proper parahoric subgroup of $P_{\pfr}$, for some $\pfr$. Because of the way we defined $P^D_{\pfr}$, $\bbg$ splits over $k_{\pfr}$ and no element of $\Aut(\dcal_{\pfr})$ preserves type of $P_{\pfr}$ whenever $P^D_{\pfr}\neq P_{\pfr}$. Now we will appeal to Rohlfs' exact sequences (for a treatment of the positive characteristic case see~\cite[Proposition 2.9]{BP}).
\[
1\rightarrow \mu(k_{\pfr_0})/\mu(k) \rightarrow \Gamma/\Lambda \xrightarrow{\delta} \delta(\adbbg(k))_{\Theta^{\circ}}\rightarrow 1,  
\]
and a similar short exact sequence for $\Gamma_D/\Lambda_D$. Since $\Gamma_D$ is a subgroup of $\Gamma$, $\delta(\adbbg(k))_{\Theta^D}$ is a subgroup of $\delta(\adbbg(k))_{\Theta^{\circ}}$, where $\Theta^D=\{\Theta_{\pfr}^D\}_{\pfr\in V_k^{\circ}}$ is the set of types of the parahoric subgroups $\{P^D_{\pfr}\}$ and $\Theta^{\circ}=\{\Theta_{\pfr}\}_{\pfr\in V_k^{\circ}}$ is the set of types of the parahoric subgroups $\{P_{\pfr}\}$. On the other hand, by a result of Harder~\cite{Ha}, $H^1(k,\bbg)$ is trivial and so $\delta(\bbg(k))=H^1(k,\mu)$. Since $\bbg$ is an inner form of type $\A_r$, $\mu$ is isomorphic to $\mu_{r+1}$ as a $k$-group scheme. Thus $H^1(k,\mu)$ is isomorphic to $k^{\times}/(k^{\times})^{r+1}$. If $P^D_{\pfr}$ is a proper subgroup of $P_{\pfr}$, then $\pfr^{(r+1)/p}(k^{\times})^{r+1}$ is in $\delta(\adbbg(k))_{\Theta^D}$ but not in $\delta(\adbbg(k))_{\Theta^{\circ}}$, which is a contradiction and completes the proof. 

\end{proof}
\begin{lem}\label{l:LatticeDiffer}
In the above setting, there is a fixed finite set $D_0$ of places of $k$ such that if $\theta(\Gamma_{D_1})=\Gamma_{D_2}$ where $D_1$ and $D_2$ are two finite subsets of $V_k^{\circ}$ and $\theta\in \Aut(G)$, then there is $\theta_1\in \Aut(k)$ such that the symmetric difference of $D_1$ and  $\theta_1(D_2)$
\[
D_1 \bigtriangleup \theta_1(D_2)=(D_1\setminus \theta_1(D_2)) \cup (\theta_1(D_2)\setminus D_1)
\]
 is a subset of $D_0$.
\end{lem}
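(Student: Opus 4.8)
The plan is to feed the abstract equivalence $\theta(\Gamma_{D_1})=\Gamma_{D_2}$ into the rigidity theorems of Margulis and Prasad, turning $\theta$ into a genuine field-and-algebraic isomorphism, and then to compare $D_1$ and $D_2$ place by place through the local types of the parahoric subgroups involved. Getting this concrete, place-local description of $\theta$ \emph{uniformly in $\theta$} is the heart of the matter.

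First I would use that $\Gamma_{D_1}$ and $\Gamma_{D_2}$ are arithmetic lattices attached to the \emph{same} pair $(k,\bbg)$: their commensurators in $G$ therefore coincide, $\mathrm{Comm}_G(\Gamma_{D_1})=\mathrm{Comm}_G(\Gamma_{D_2})=:\Delta$, and by Margulis' commensurator theorem $\Delta$ is the image of $\bbg(k)$ in $G=\bbg(k_{\pfr_0})$ (up to a subgroup of finite index depending only on $(k,\bbg)$), so in particular $\Delta$ is independent of $D_1,D_2$. Since an abstract automorphism carries a commensurator to a commensurator, $\theta(\Delta)=\Delta$, and $\theta$ restricts to an abstract automorphism of $\bbg(k)$. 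By the results of Margulis and Prasad on abstract isomorphisms of lattices in $G$ (equivalently Borel--Tits; here $p\ge 5$ is what excludes exceptional isogenies), this restriction is, modulo an inner automorphism by an element of $\adbbg(k)$, the composite of an automorphism $\theta_1\in\Aut(k)$ with an isomorphism $\phi$ of algebraic $k$-groups; write $\theta_1$ also for the induced permutation of $V_k$. Moreover $\theta_1(\pfr_0)=\pfr_0$: the inner and algebraic factors do not change the set of places at which a subgroup of $\bbg(k)$ is arithmetic, so since $\theta$ sends the $\pfr_0$-arithmetic group $\Lambda_{D_1}$ to the $\pfr_0$-arithmetic group $\theta(\Lambda_{D_1})=\Lambda_{D_2}\subset G$, the field automorphism must fix $\pfr_0$; hence $\theta_1$ permutes $V_k^{\circ}$.

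Next I descend to the completions. By Lemma~\ref{l:MaximalCondition}, $\Gamma_{D_i}\cap\bbg(k)=\Lambda_{D_i}$, and since $\theta(\bbg(k))=\bbg(k)$ we get $\theta(\Lambda_{D_1})=\Lambda_{D_2}$. By strong approximation for the simply connected group $\bbg$ (which is isotropic at $\pfr_0$), the closure of $\Lambda_{D_i}$ in $\bbg(k_{\pfr})$ is $P^{D_i}_{\pfr}$ for every $\pfr\in V_k^{\circ}$. Applying $\theta$, which on $\bbg(k)$ is an inner automorphism composed with $\phi\circ\theta_1$, and comparing closures at matching places, one finds that $\phi$ carries $P^{D_1}_{\pfr}$ to a parahoric of $\bbg(k_{\theta_1(\pfr)})$ that is $\adbbg(k_{\theta_1(\pfr)})$-conjugate to $P^{D_2}_{\theta_1(\pfr)}$. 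Since an algebraic-group isomorphism and $\adbbg$-conjugation both carry Iwahori subgroups to Iwahori subgroups and preserve the index of a parahoric over an Iwahori subgroup, this yields $[P^{D_1}_{\pfr}:I_{\pfr}]=[P^{D_2}_{\theta_1(\pfr)}:I_{\theta_1(\pfr)}]$ for every $\pfr\in V_k^{\circ}$, where $I_{\pfr}$ is an Iwahori subgroup of $\bbg(k_{\pfr})$.

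Finally, the combinatorial step. Let $S_0\subset V_k$ be $\{\pfr_0\}$ together with the set of places $\pfr$ at which $\bbg$ is not quasi-split over $k_{\pfr}$ or does not split over $\widehat{k}_{\pfr}$; this is a finite set depending only on the fixed pair $(k,\bbg)$, not on $\theta$, $D_1$ or $D_2$. For $\pfr\notin S_0$, the defining properties of $\{P^m_{\pfr}\}$ and $\{P'_{\pfr}\}$ show that $P^{D_i}_{\pfr}$ attains the maximal index over an Iwahori subgroup of $\bbg(k_{\pfr})$ exactly when $\pfr\notin D_i$, and attains a strictly smaller index (it is then a non-hyperspecial special vertex, respectively an orbit of $p$ vertices) when $\pfr\in D_i$. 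If both $\pfr\notin S_0$ and $\theta_1(\pfr)\notin S_0$, then $\phi$ induces an isomorphism $\bbg(k_{\pfr})\xrightarrow{\sim}\bbg(k_{\theta_1(\pfr)})$, so the maximal parahoric index is the same over the two fields, and the identity $[P^{D_1}_{\pfr}:I_{\pfr}]=[P^{D_2}_{\theta_1(\pfr)}:I_{\theta_1(\pfr)}]$ forces $\pfr\in D_1\iff\theta_1(\pfr)\in D_2$. Consequently
\[
D_1\bigtriangleup\theta_1^{-1}(D_2)\ \subseteq\ S_0\cup\theta_1^{-1}(S_0)\ \subseteq\ \bigcup_{\tau\in\Aut(k)}\tau(S_0)\ =:\ D_0,
\]
which is a finite set since a global function field with a given constant field has only finitely many automorphisms. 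Relabelling $\theta_1^{-1}$ as $\theta_1$ gives the statement. The only subtlety beyond the rigidity step is that $\phi$ need not send $P^m_{\pfr}$ to $P^m_{\theta_1(\pfr)}$ on the nose --- there can be several hyperspecial conjugacy classes at a place --- which is precisely why the comparison above is run through the Iwahori index rather than through types directly.
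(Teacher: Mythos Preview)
Your argument is correct and follows the same strategy as the paper: invoke Margulis--Prasad rigidity to decompose $\theta$ as a field automorphism $\theta_1\in\Aut(k)$ composed with a $k$-isomorphism of algebraic groups, use Lemma~\ref{l:MaximalCondition} to deduce $\theta(\Lambda_{D_1})=\Lambda_{D_2}$, pass to local closures via strong approximation, and compare parahorics place by place outside the finite set where $\bbg$ fails to be quasi-split or to split over $\widehat{k}_{\pfr}$. The paper's execution is slightly more direct: it applies Margulis' theorem straight to a finite-index sublattice (bypassing your commensurator detour, whose conclusion $\theta(\bbg(k))=\bbg(k)$ is not quite immediate from $\theta(\Delta)=\Delta$) and then Prasad's extension theorem~\cite{Pra} to get $\theta=\theta_2\circ\theta_1$ on all of $G$, so that $\theta(P^{D_1}_{\pfr})=P^{D_2}_{\theta_1(\pfr)}$ holds exactly rather than up to conjugacy; the local step is then simply ``hyperspecial versus non-hyperspecial'' rather than an Iwahori-index comparison, and since the existence of the $k$-isomorphism $\theta_2\colon\leftexp{\theta_1}{\bbg}\to\bbg$ forces $\theta_1(S_0)=S_0$, the paper takes $D_0=S_0$ itself rather than your safe enlargement $\bigcup_{\tau\in\Aut(k)}\tau(S_0)$.
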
 
\begin{proof}
First we will prove that $\theta(\bbg(k))=\bbg(k)$.
Since $\Lambda_{D_i}\subseteq \bbg(k)$'s are of finite index in $\Gamma_{D_i}$'s, going to $\Lambda_1\subseteq \bbg(k)$ a finite index subgroup of $\Lambda_{D_1}$ we can assume that $\theta(\Lambda_1)=\Lambda_2\subseteq \bbg(k)$. By a theorem of Margulis~\cite[Theorem C, Chapter VIII]{Mar}, there exist an automorphism $\theta_1:k\rightarrow k$, a $k$-isomorphism $\theta_2: \leftexp{\theta_1}{\bbg}\rightarrow \bbg$ and a homomorphism $\theta_3:\Lambda_1\rightarrow \mu(k_{\pfr_0})$ such that
\[
\theta(\lambda)=\theta_3(\lambda)\cdot\theta_2(\theta_1(\lambda)),
\]
for all $\lambda\in \Lambda_1$. As $\mu(k_{\pfr_0})$ is a finite group, on a lattice $\theta$ is equal to $\theta_2\circ\theta_1$. Hence by a theorem of G.~Prasad~\cite{Pra}, $\theta=\theta_2\circ\theta_1$. In particular, $\theta(\bbg(k))=\bbg(k)$ and it can be uniquely extended to a continuous isomorphism between $\bbg(k_{\pfr})$ and $\bbg(k_{\theta_1(\pfr)})$ for any $\pfr$. \\

\noindent
By Lemma~\ref{l:MaximalCondition} and the above discussion, $\theta(\Lambda_{D_1})=\Lambda_{D_2}$. Therefore, again by the above discussion, $\theta(P^{D_1}_{\pfr})=P^{D_2}_{\theta_1(\pfr)}$, for any $\pfr\in V_k^{\circ}$. However, if $\bbg$ is quasi-split over $k_{\pfr}$, $\bbg$ splits over $\widehat{k}_{\pfr}$, and $\pfr\in D_1\bigtriangleup \theta_1(D_2)$, then on one hand $\Theta^{D_1}_{\pfr}$ and $\Theta^{D_2}_{\theta_1(\pfr)}$ can be considered to be the same subset of $\dcal_{\pfr}$, after identifying $\dcal_{\pfr}$ with $\dcal_{\theta_1(\pfr)}$, and on the other hand one of them is hyper-special and the other one is not, which is a contradiction. Therefore $D_1\bigtriangleup \theta_1(D_2)$ is a subset of $D_0$ the set of places such that either $\bbg$ is not quasi-split over $k_{\pfr}$ or it does not split over $\widehat{k}_{\pfr}$, which completes our proof.
\end{proof}
\noindent
We notice that, by Theorem \ref{t:covolume},
\[
\vol(G/\Gamma_D)\le c_{23}\h q_k^{\dim\bbg\cdot\deg(\Div(D))},
\]
where $\Div(D)=\sum_{\pfr\in D} \pfr$ and $c_{23}$ just depends on $\bbg$ and $k$. On the other hand, as a consequence of Weil's theorem on Riemann hypothesis for curves over finite fields~\cite[Proposition 17.2]{Ro}, for any $N$, the number of square-free effective divisors of $k$ whose degree is $N$ is at least $c_{24} q_k^N,$ where $c_{24}$ only depends on $k$. We also know that the group of automorphism of $k$ is finite~\cite{Sc}. Hence, by Lemmas~\ref{l:MaximalCondition} and \ref{l:LatticeDiffer}, we get the desired polynomial lower bound on the number of maximal lattices in $G$.
\subsection{An upper bound on $\#\Gamma/\Lambda$.}
Here we will give a polynomial upper bound on $\#\Gamma/\Lambda$ which is needed on counting the number of all the lattices with covolume at most $x$.\\

\noindent
Let $\Gamma$ be a maximal lattice in $G$ with covolume at most $x$ and $k$, $l$, $\bbg$, $\Lambda$, and $\{P_{\pfr}\}_{\pfr\in V_k^{\circ}}$ as before. 
\begin{lem}\label{l:gamma/lambda}
In the above setting, $\#\Gamma/\Lambda\le x^c$, where $c$ just depends on $G$. 
\end{lem}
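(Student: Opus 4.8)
The plan is to feed Rohlfs' short exact sequence from \S\ref{s:maximal},
\[
1\rightarrow \mu(k_{\pfr_0})/\mu(k) \rightarrow \Gamma/\Lambda \rightarrow \delta(\bbg(k))_{\Theta^{\circ}}\rightarrow 1,
\]
into the estimates already assembled in the previous subsections. Since $\#\Gamma/\Lambda=\#\bigl(\mu(k_{\pfr_0})/\mu(k)\bigr)\cdot\#\delta(\bbg(k))_{\Theta^{\circ}}$, it suffices to bound the two factors separately. The first is at most $\#\mu(\overline{k})\le t(\bbg)^{\vare(\bbg)}$, which depends only on the absolute type of $\bbg_0$, hence only on $G$. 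So everything reduces to a polynomial-in-$x$ bound for $\#\delta(\bbg(k))_{\Theta^{\circ}}$.

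For that I would use the inequality recorded in \S\ref{ss:LowerBound},
\[
\#\delta(\bbg(k))_{\Theta^{\circ}}\le \#H^1(k,\mu)_{\xi}\cdot\prod_{\pfr\in V_k}\#\Xi_{\pfr}\cdot\#\Aut\dcal_{\pfr_0},
\]
and estimate its three factors. The factor $\#\Aut\dcal_{\pfr_0}$ depends only on $G$. For the product $\prod_{\pfr}\#\Xi_{\pfr}$: each $\Xi_{\pfr}$ is a subgroup of the finite automorphism group of the local Dynkin diagram $\dcal_{\pfr}$, so $\#\Xi_{\pfr}$ is bounded by a constant depending only on $G$; moreover, as is implicit in the finiteness analysis of \S\ref{ss:LowerBound} and Proposition~\ref{p:LocalFactors} (and appendix~C of \cite{BP}), $\Xi_{\pfr}$ can be nontrivial only at places lying in $\mathfrak{R}(\Gamma)$ together with the places of $k$ ramifying in $l$. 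By the Main Inequality (Theorem~\ref{t:MainInequality}), $\vol(G/\Gamma)\le x$ forces $\prod_{\pfr\in\mathfrak{R}(\Gamma)}q_{\pfr}\le x^{c}$, hence $\#\mathfrak{R}(\Gamma)=O(\log x)$, and Riemann--Hurwitz bounds the number of places of $k$ ramifying in $l$ by $O(g_l)$, which is again $O(\log x)$ by \S\ref{ss:k-l}. Consequently $\prod_{\pfr}\#\Xi_{\pfr}\le x^{c}$.

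It remains to bound $\#H^1(k,\mu)_{\xi}$, and here I would simply invoke Proposition~\ref{p:H1Upper}: in each of its three cases $\#H^1(k,\mu)_{\xi}$ is at most a $G$-dependent constant times $h_k^{\vare}$ or $h_l$, times $t^{\vare\#T(\bbg)}$ (or $4^{\#T(\bbg)}$), times possibly $q_l^{2(g_l-1)}/q_k^{2(g_k-1)[l:k]}$. Weil's Riemann hypothesis for curves gives $h_k\le q_k^{2g_k}$ and $h_l\le q_l^{2g_l}$, while \S\ref{ss:k-l} together with the Main Inequality yields $q_k^{g_k}\le x^{c}$ and $q_k^{g_l}\le x^{c}$ — recall $q_k$ is fixed by $G$ since $q_k^{\deg\pfr_0}=\#\f$, and $q_l=q_k^{[l:k]}$ with $[l:k]\le 3$ — so $h_k$, $h_l$, and the quotient $q_l^{2(g_l-1)}/q_k^{2(g_k-1)[l:k]}\le q_l^{2g_l}$ are all $\le x^{c}$. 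Finally $T(\bbg)\subseteq T_c\subseteq\mathfrak{R}(\Gamma)$, so $\#T(\bbg)=O(\log x)$ and $t^{\vare\#T(\bbg)}\le x^{c}$. Multiplying the three bounds gives $\#\delta(\bbg(k))_{\Theta^{\circ}}\le x^{c}$, and therefore $\#\Gamma/\Lambda\le x^{c}$ with $c$ depending only on $G$.

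This is essentially an assembly job: the short exact sequence, Proposition~\ref{p:H1Upper}, the Main Inequality, and the bounds $q_k^{g_k},q_k^{g_l}\le x^{O(1)}$ from \S\ref{ss:k-l} are all already in hand, so no new idea is needed. The one point I would watch carefully — and the only place I expect genuine friction — is uniformity: one must check that the local Dynkin automorphism groups, the $\Xi_{\pfr}$, the constant-field sizes $q_k,q_l$, and the exponents $\vare,t$ are all bounded purely in terms of $G$, and that each of the ``bad'' sets $T_c$, $T_l$, $T(\bbg)$, and the set of places ramifying in $l/k$ has cardinality $O(\log x)$ with implied constant depending only on $G$, so that raising a $G$-constant to these cardinalities stays polynomial in $x$.
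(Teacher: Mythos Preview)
Your proposal is correct and follows essentially the same route as the paper's proof: Rohlfs' exact sequence, the inequality $\#\delta(\adbbg(k))_{\Theta^{\circ}}\le \#H^1(k,\mu)_{\xi}\cdot\prod_{\pfr}\#\Xi_{\pfr}\cdot\#\Aut\dcal_{\pfr_0}$ from \S\ref{ss:LowerBound}, Proposition~\ref{p:H1Upper}, the Main Inequality to bound $\#\mathfrak{R}(\Gamma)$ and $g_k,g_l$, and Weil's Riemann hypothesis for the class numbers. One small slip: $q_l$ need not equal $q_k^{[l:k]}$ (it equals $q_k^{[\f_l:\f_k]}$, which merely divides $q_k^{[l:k]}$), but since you only need $q_l$ bounded by a power of $q_k$ this does not affect the argument.
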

\begin{proof} By Rohlfs' short exact sequence~\cite[Proposition 2.9]{BP} and arguments in~\ref{ss:LowerBound}, we know that
\[
\#\Gamma/\Lambda\le c_{25} \#H^1(k,\mu)_{\xi}\cdot\prod_{\pfr\in V_k} \#\Xi_{\pfr}\cdot \#{\rm Aut}\h \dcal_{\pfr_0},
\]
where $c_{25}$ just depends on $G$.  By the main inequality, we have $\#\mathfrak{R}(\Gamma)\le \log x$. On the other hand, $\#\Xi_{\pfr}\le c_{26}$ where $c_{26}$ just depends on $G$. Now, by Proposition~\ref{p:H1Upper}, the upper  bounds obtained for $g_k$ and $g_l$ in the proof of the main inequality and finally using Weil's Riemann hypothesis,  we can conclude the desired inequality. 
\end{proof}
\section{Counting all the lattices and subgroup growth.}
Sections~\ref{ss:ReductionProP} and \ref{ss:ReductionLieAlgebra} are devoted to the proof of the upper bound of Theorem~\ref{t:CountingLattices} modulo Theorem~\ref{t:GradedLie}, which is proved in Section \ref{ss:GradedLie}, the lower bound of Theorem~\ref{t:CountingLattices} is given in Section~\ref{ss:LatticesLowerBound} and, finally, the proof of Theorem~\ref{t:SubgroupGrowth} is completed in Section~\ref{ss:SubgroupGrowth}.  
\subsection{Reduction to subgroup growth of certain pro-$p$ groups.}~\label{ss:ReductionProP}
Let $G=\bbg_0(K)$ as above. For any $x$, let $\mathfrak{M}_x$ be a set of representatives of maximal lattices in $G$ with covolume at most $x$ up to $\Aut(G)$. Hence, by Theorem~\ref{t:maximal}, 
\begin{align}
\notag \rho_x(G)\le& \sum_{\Gamma\in \mathfrak{M}_x} s_{x/\vol(G/\Gamma)}({\Gamma})\le \#\mathfrak{M}_x\cdot \max_{\Gamma\in \mathfrak{M}_x} s_{x}({\Gamma})\\
\notag \le &x^{B \log\log x}\cdot  \max_{\Gamma\in \mathfrak{M}_x} s_{x}({\Gamma}).
\end{align}
Thus, in order to prove the upper bound of Theorem~\ref{t:CountingLattices}, it is enough to show the following.
\begin{thm}\label{t:MaxSubgroupGrowthUpperBound}
Let $\Gamma$ be a maximal lattice in $G$. If CSP, MP and Weil conjecture hold and $\vol(G/\Gamma)\ll x$, then 
\[
\log s_x(\Gamma)\ll (\log x)^2,
\]
 where the implied constants only depend on $G$.
\end{thm}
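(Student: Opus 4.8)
The plan is to carry out the four-step reduction sketched in the introduction: pass from $s_x(\Gamma)$ to the subgroup count of a product of pro-$p$ parahoric groups, then to a codimension inequality in an associated graded Lie algebra, and finally invoke Theorem~\ref{t:GradedLie}. \textbf{Step 1: reduction to congruence subgroups of parahorics.} By the description of maximal lattices in Section~\ref{s:maximal}, $\Gamma=N_G(\Lambda)$ with $\Lambda=\bbg(k)\cap\prod_{\pfr\in V^{\circ}_k}P_{\pfr}$, and by Lemma~\ref{l:gamma/lambda} we have $\#\Gamma/\Lambda\le x^{c}$ with $c$ depending only on $G$; so it suffices to bound $s_x(\Lambda)$ up to a factor $x^{O(1)}$. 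Assuming CSP and MP, the congruence kernel of $\Lambda$ is finite (of order bounded in terms of $G$), so modulo a further factor $x^{O(1)}$ we may replace $\Lambda$ by its congruence completion, which is commensurable with $\prod_{\pfr\in V^{\circ}_k}P_{\pfr}$. An open subgroup of index $\le x$ has proper image in $P_{\pfr}$ only for $\pfr$ in a subset $T\subseteq V^{\circ}_k$ with $\deg T:=\sum_{\pfr\in T}\deg\pfr\le C\log x$, and by Lemma~\ref{l:DivDeg} there are at most $x^{O(1)}$ such $T$; so it is enough to bound $s_x\!\big(\prod_{\pfr\in T}P_{\pfr}\big)$ for a fixed such $T$. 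Finally, following Lubotzky~\cite{Lu}, the Babai--Cameron--Palfy theorem---applicable because the nonabelian composition factors of $\prod_{\pfr\in T}P_{\pfr}/P^{(1)}_{\pfr}$ are finite simple groups of Lie type of bounded rank---reduces this to $x^{O(1)}\cdot s_x\!\big(\prod_{\pfr\in T}P^{(1)}_{\pfr}\big)$, where $P^{(1)}_{\pfr}$ is the first congruence subgroup of $P_{\pfr}$.

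\textbf{Step 2: reduction to a codimension inequality.} Put $P=\prod_{\pfr\in T}P^{(1)}_{\pfr}$, a pro-$p$ group equipped with the congruence filtration, with associated graded $\f_p$-Lie algebra $\lcal=\bigoplus_{\pfr\in T}L_{\Theta_{\pfr}}$ as in Section~\ref{ss:AssociatedGraded}. By the Lubotzky--Shalev counting method for such groups~\cite{LSa} (see also~\cite[Section 6.2]{LS},~\cite{ANS}), $\log s_x(P)$ is at most $O(\log x)$ times a bound on the number of generators of the open subgroups of $P$ of index $\le x$, and for such a subgroup with associated graded $\f_p$-subalgebra $\cal\subseteq\lcal$ this number is $\ll\dim_{\f_p}\cal/[\cal,\cal]$. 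Since $\cdim_{\lcal}\cal\ll\log x$ for these $\cal$, it therefore suffices to prove
\[
\cdim_{\cal}[\cal,\cal]\le c_2\,\deg T+c_3\,\cdim_{\lcal}\cal
\]
with $c_2,c_3$ depending only on $\bbg_0$ and $K$; combined with $\deg T\ll\log x$ this yields $\dim_{\f_p}\cal/[\cal,\cal]\ll\log x$ and hence $\log s_x(\Gamma)\ll(\log x)^2$.

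\textbf{Step 3: matching with Theorem~\ref{t:GradedLie}.} Extending scalars from $\f$ to the residue field $\F$ of $\widehat K$ only multiplies codimensions by $[\F:\f]$, so we may work with $\lcal\otimes_{\f}\F$; by the Prasad--Raghunathan description recalled earlier this is $\bigoplus_{\pfr\in T}\mathcal{L}_{\Theta_{\pfr}}$, a sum of graded Lie algebras of parahoric subgroups of the quasi-split groups $\bbg(\widehat k_{\pfr})$. By Corollary~\ref{c:ChangeParahorics}, each $\mathcal{L}_{\Theta_{\pfr}}$ and the algebra $\mathcal{L}_{\{\psi_s\}}$ of the ``largest'' parahoric both embed as finite-codimension (codim $\le\dim\bbg$) Lie subalgebras of $\mathcal{L}_{\Delta_{\pfr}}$, so $\lcal\otimes_{\f}\F$ and $\L^{D}$, where $D=\#T$ and $\L=\mathcal{L}_{\{\psi_s\}}\simeq\bigoplus_{i\ge 1}\gfr_{i({\rm mod}\h r)}\otimes t^i$ with $\widehat\gfr=\bigoplus_{i=0}^{r-1}\gfr_i$ a perfect $\bbz/r\bbz$-graded algebra (Corollary~\ref{c:SpecialAlgebra}, using $p\ge 5$), are commensurable up to codimension $O(D)=O(\deg T)$. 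Thus, after adjusting $\cal$ at the cost of $O(\deg T)$ in codimension, it becomes a finite-codimension $\F$-subalgebra of $\L^{D}$, exactly the setting of Theorem~\ref{t:GradedLie}, which gives $\cdim_{\L^{D}}[\cal,\cal]\le C(\cdim_{\L^{D}}\cal+D)$ with $C=C(\widehat\gfr)$ depending only on $G$; undoing the change of parahorics and the scalar extension yields the inequality of Step 2.

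\textbf{The main difficulty.} The real work is contained in Theorem~\ref{t:GradedLie}, which we are entitled to assume here; within the present argument the delicate points are keeping all the constants in Steps 1--3 uniform (depending only on $G$), and verifying precisely that passage to a finite-codimension graded Lie subalgebra mirrors passage to an open subgroup of bounded index---so that the two a priori bounds $\deg T\ll\log x$ and $\cdim_{\lcal}\cal\ll\log x$ genuinely control $\log s_x(\Gamma)$ via the Lubotzky--Shalev machinery.
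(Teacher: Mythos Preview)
Your overall strategy matches the paper's closely, but Step~1 contains a genuine gap in the reduction from $\prod_{\pfr\in V_k^\circ}P_\pfr$ to a finite product $\prod_{\pfr\in T}P_\pfr$. You argue that an open subgroup $H$ of index $\le x$ has \emph{proper projection} to $P_\pfr$ only for $\pfr$ in a set $T$ with $\deg T\ll\log x$, and then conclude that it suffices to bound $s_x(\prod_{\pfr\in T}P_\pfr)$. But surjecting onto each $P_\pfr$ for $\pfr\notin T$ does \emph{not} imply that $H$ contains $\prod_{\pfr\notin T}P_\pfr$: Goursat-type diagonal subgroups give counterexamples. The paper handles this by applying Babai--Cameron--Palfy \emph{first}, via~\cite[Proposition~4.3]{Lu}, to reduce to counting \emph{subnormal} subgroups of index $\le x^{c}$; only then does Lemma~\ref{l:Subnormal} (using the perfectness of $P_\pfr$ and the quasi-simplicity of $P_\pfr/P_\pfr^{(1)}$ at hyperspecial places) show that a subnormal $H$ genuinely contains $P_\pfr$ for $\pfr\notin V(H)$ with $\deg V(H)\ll\log x$. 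Your invocation of BCP comes too late, serving only to pass from $P_\pfr$ to $P_\pfr^{(1)}$.

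In Step~3 there are also some inaccuracies worth fixing. Base change from $\f_p$ to $\F$ \emph{preserves} codimension (it does not multiply it); the relevant base field is $\f_p$, not the residue field $\f$ of $K$, since the $\f_\pfr$ vary with $\pfr$; and after tensoring one obtains $\bigl(\bigoplus_{\pfr\in T}\L_{\Theta_\pfr}^{\deg\pfr}\bigr)^{[\f_k:\f_p]}$, so the relevant exponent is $[\f_k:\f_p]\deg T$ rather than $D=\#T$. More substantively, the graded algebra $\L_{\{\psi_s\}}$ and the perfect algebra $\widehat\gfr$ depend on the type of $\bbg$ over $\widehat{k}_\pfr$, which is not constant in $\pfr$: the paper splits $T=T_{\rm s}\cup T_{\rm r}\cup T_{\rm n}$ (split, non-split reduced, non-reduced) and applies Theorem~\ref{t:GradedLie} separately to each piece before recombining. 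None of this changes the shape of the argument, but your identification of $\lcal\otimes\F$ with a single $\L^D$ is not quite correct.
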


\noindent
In this section we reduce the proof of Theorem \ref{t:MaxSubgroupGrowthUpperBound} to 
understanding the subgroup growth of certain pro-$p$ groups.  Let $\Gamma$ be a maximal 
lattice in $G$ with covolume at most $x$ and $k$, $l$, $\pfr_0$, $\bbg$, $\Lambda$ and 
$\pcal=\{P_{\pfr}\}_{\pfr\in V_k^{\circ}}$ as before. Let $\mathfrak{R}(\pcal)$ be the set of all 
the places in $V_k^{\circ}$ such that $P_{\pfr}$ is not hyper-special. 
\begin{lem}\label{l:Recall}
In the above setting, we have that
\begin{itemize}
\item[1-] $\deg(\Div(\mathfrak{R}(\pcal)))\le c_2 \log x$ where $\Div(\mathfrak{R}(\pcal))=\sum_{\pfr\in \mathfrak{R}(\pcal)}\pfr$ and $c_2$ just depends on $G$. 
\item[2-] $\#\Gamma/\Lambda\le x^{c_3}$, where $c_3$ just depends on $G$.
\end{itemize}
\end{lem}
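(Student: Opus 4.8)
\emph{Proof idea.} Part (2) requires nothing new: it is exactly Lemma~\ref{l:gamma/lambda} applied to the present $\Gamma$ and $\Lambda$, whose hypotheses coincide with ours. So the plan is to concentrate on part (1) and deduce it from the Main Inequality (Theorem~\ref{t:MainInequality}) by comparing $\mathfrak{R}(\pcal)$ with the set of ramified primes $\mathfrak{R}(\Gamma)=T_c\cup T_l$ of Definition~\ref{d:Ramified}.

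The first step is the set-theoretic inclusion
\[
\mathfrak{R}(\pcal)\subseteq \mathfrak{R}(\Gamma)\cup R_{l/k},
\]
where $R_{l/k}$ denotes the set of places of $k$ ramifying in $l$. Indeed, if $\pfr\in V_k^{\circ}$ and $P_{\pfr}$ is not hyper-special, then one of the following holds: $\bbg$ is not quasi-split over $k_{\pfr}$ (hence $\pfr\in T_c$); or $\bbg$ is quasi-split over $k_{\pfr}$ but does not split over $\widehat{k}_{\pfr}$, in which case $\pfr$ is ramified in $l$, since $\bbg$ and its quasi-split inner form $\gcal$ become isomorphic over $\widehat{k}_{\pfr}$ (triviality of $H^1$ of a simply connected group over the maximal unramified extension) and $\gcal$ splits over $\widehat{k}_{\pfr}$ precisely when $\pfr$ is unramified in $l$; or $\bbg$ is quasi-split over $k_{\pfr}$ and splits over $\widehat{k}_{\pfr}$ while $P_{\pfr}$ is not hyper-special, which puts $\pfr$ into $T_l$.

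The second step is to bound the two pieces. Taking logarithms in Theorem~\ref{t:MainInequality}, writing $q_{\pfr}=q_k^{\deg\pfr}$ and using that $q_k^{\sigma_1 g_k+\sigma_2 g_l}\ge 1$, the hypothesis $\vol(G/\Gamma)\ll x$ yields both $\deg(\Div(\mathfrak{R}(\Gamma)))\ll\log x$ and $g_l\ll\log x$ (here $q_k$ itself is bounded in terms of $G$, since $q_k^{\deg\pfr_0}=\#\f$). For $R_{l/k}$, observe that $[l:k]\le 3<p$, so $l/k$ is tamely ramified, and the Riemann--Hurwitz formula gives $\deg(\Div(R_{l/k}))\le 2g_l\ll\log x$. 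Combining with the inclusion above,
\[
\deg(\Div(\mathfrak{R}(\pcal)))\le \deg(\Div(\mathfrak{R}(\Gamma)))+\deg(\Div(R_{l/k}))\ll\log x,
\]
which is part (1).

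I do not expect a genuine obstacle here; everything is bookkeeping around the Main Inequality. The only point needing a little care is the overlap between $\mathfrak{R}(\pcal)$ and $\mathfrak{R}(\Gamma)$: a place ramifying in $l$ over which $P_{\pfr}$ is special but not hyper-special lies in $\mathfrak{R}(\pcal)$ yet need not lie in $T_c\cup T_l$, and it is exactly this discrepancy that the Riemann--Hurwitz term $\deg(\Div(R_{l/k}))$ absorbs.
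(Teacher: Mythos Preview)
Your proof is correct and follows essentially the same route as the paper's: the inclusion $\mathfrak{R}(\pcal)\subseteq \mathfrak{R}(\Gamma)\cup R_{l/k}$, the Main Inequality for the first piece, Riemann--Hurwitz together with $g_l\ll\log x$ for the second, and a direct appeal to Lemma~\ref{l:gamma/lambda} for part~(2). The paper's Hurwitz bound carries the extra factors $[\f_l:\f_k]$ and $[l:k]$ (so your ``$\le 2g_l$'' is slightly imprecise), but these are bounded in terms of $G$ and are harmlessly absorbed into your final $\ll\log x$.
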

\begin{proof}
Note that $\deg(\Div(\mathfrak{R}(\pcal)))\le \deg(\Div(\mathfrak{R}(\Gamma)))+\sum_{\pfr/l\h ramified} \deg(\pfr)$. By Hurwitz genus formula, $\sum_{\pfr/l\h ramified} \deg(\pfr)\le 2[\f_l:\f_k](g_l-1)+2[l:k]$, and, by the main inequality, $\deg(\mathfrak{R}(\Gamma))\le c_4 \log x$ where $c_4$ just depends on $G$. On the other hand, by discussions in  \ref{ss:k-l}, $g_l\le c_5 \log x$ where $c_5$ just depends on $G$. This finishes the proof of the first part. The second part is the direct consequence of Lemma~\ref{l:gamma/lambda}.
\end{proof}
\noindent
By Lemma~\ref{l:Recall} and ~\cite[Lemma 1.2.2, Proposition 1.3.2]{LS},  
\[
s_x({\Gamma})\le s_x(\Lambda)\cdot s_x({\Gamma/\Lambda})\cdot x^{c_3 \log x}\le s_x({\Lambda})\cdot x^{(c^2_3+c_3)\log x}.
\]
So in order to get the desired result it is enough to give the right upper bound for $s_x({\Lambda})$.
Let $\widehat{\Lambda}$ be the profinite closure of $\Lambda$. Then there is a correspondence between the open subgroups of $\widehat{\Lambda}$ and subgroups of finite index in $\Lambda$. 
As we assume the congruence subgroup property and Margulis-Platonov's conjecture, because of strong approximation, $\widehat{\Lambda}\simeq \prod_{\pfr\in V_k^{\circ}}P_{\pfr}$.\\

\noindent
Using a result of Babai-Cameron-Palfy~\cite{BCP}, Lubotzky~\cite[Proposition 4.3]{Lu} proved that if $H$ is a subgroup of index at most $x$ in $\prod_{\pfr\in V_k^{\circ}}P_{\pfr}$, then it contains a subnormal subgroup of $\prod_{\pfr\in V_k^{\circ}}P_{\pfr}$ of index at most $x^{c_6}$ where $c_6$ just depends on $G$.\footnote{In~\cite{Lu}, $\bbg$ is assumed to be split. However the same argument works without this assumption.} Hence, by~\cite[Lemma 1.2.3]{LS},
\[
s_x({\prod_{\pfr\in V_k^{\circ}}P_{\pfr}})\le x^{c_6^2 \log x}\cdot s_{x^{c_6}}^{\triangleleft\triangleleft}(\prod_{\pfr\in V_k^{\circ}}P_{\pfr}),
\]
where $s_x^{\triangleleft\triangleleft}(\bullet)$ is the number of subnormal subgroups of index at most $x$. Thus it is enough to get the right upper bound for $s_x^{\triangleleft\triangleleft}(\prod_{\pfr\in V_k^{\circ}}P_{\pfr}).$ 
\begin{lem}\label{l:Subnormal}
Let $H$ be a subnormal subgroup of $\prod_{\pfr\in V_k^{\circ}}P_{\pfr}$ of index at most $x$. Then there is $V(H)$ a finite subset $V_k$ such that
\begin{itemize}
\item[1-] If $\pfr$ is not in $V(H)$, then $P_{\pfr}\subseteq H$.
\item[2-] $\deg(\Div(V(H)))\le c_7 \log x$, where $\Div(V(H))=\sum_{\pfr\in V(H)}\pfr$ and $c_7$ just depends on $G$.
\end{itemize}
\end{lem}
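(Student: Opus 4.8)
The plan is to take $V(H):=\mathfrak{R}(\pcal)\cup W$, where $W$ is the set of $\pfr\in V_k^{\circ}$ for which $P_{\pfr}$ is hyperspecial but $P_{\pfr}\not\subseteq H$. Then (1) is immediate, and since $\deg(\Div(\mathfrak{R}(\pcal)))\le c_2\log x$ by Lemma~\ref{l:Recall}(1), the whole statement reduces to bounding $\deg(\Div(W))$. Write $\Lambda^{*}=\prod_{\pfr\in V_k^{\circ}}P_{\pfr}$ and let $\pi_{\pfr}\colon\Lambda^{*}\to P_{\pfr}$ be the coordinate projection. I would first record two structural facts about a hyperspecial $P_{\pfr}$, both using $p\ge 5$: \emph{(A1)} $P_{\pfr}$ is topologically perfect; here $P_{\pfr}/P_{\pfr}^{(1)}$ is quasisimple (it is the group of $\f_{\pfr}$-points of the semisimple simply connected special fibre, and $q_{\pfr}\ge5$), each congruence layer $P_{\pfr}^{(i)}/P_{\pfr}^{(i+1)}$ is the adjoint module $\gfr(\f_{\pfr})$, and for $p\ge5$ the Chevalley relations recover the Lie bracket from the conjugation action, so the $P_{\pfr}/P_{\pfr}^{(1)}$-coinvariants of $\gfr(\f_{\pfr})$ vanish, whence a routine successive-approximation argument across the layers gives $\overline{[P_{\pfr},P_{\pfr}]}=P_{\pfr}$; and \emph{(A2)} every proper subnormal subgroup $B\triangleleft\triangleleft P_{\pfr}$ has index at least $q_{\pfr}^{c}$ for a fixed $c=c(G)>0$ — indeed the image of $B$ in $P_{\pfr}/P_{\pfr}^{(1)}$ is subnormal in a quasisimple group, hence either central (index $\ge q_{\pfr}^{\dim\bbg-O(1)}$) or everything, in which latter case $B\cap P_{\pfr}^{(1)}$ is a proper closed $B$-invariant subgroup of $P_{\pfr}^{(1)}$, so by the same approximation argument one of its graded pieces is a proper $P_{\pfr}/P_{\pfr}^{(1)}$-submodule of $\gfr(\f_{\pfr})$, which for $p\ge5$ has $\f_{\pfr}$-codimension $\ge\dim\bbg-1$, giving $[P_{\pfr}:B]\ge q_{\pfr}^{\dim\bbg-1}$.

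Next I would invoke subnormality. If $H$ has subnormal defect $d$ in $\Lambda^{*}$, a subnormal chain yields $[\Lambda^{*},\underbrace{H,\dots,H}_{d}]\subseteq H$; intersecting with the direct factor $P_{\pfr}$ (so that only the $\pfr$-coordinates of $H$ survive) this becomes $[P_{\pfr},\underbrace{\pi_{\pfr}(H),\dots,\pi_{\pfr}(H)}_{d}]\subseteq H\cap P_{\pfr}$. The step I expect to be the real obstacle — and precisely the place where subnormality, rather than mere finite index, is indispensable, since a diagonally embedded finite-index subgroup projects onto a factor without containing it — is to show $\pi_{\pfr}(H)\subsetneq P_{\pfr}$ for every $\pfr\in W$. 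Suppose instead $\pi_{\pfr}(H)=P_{\pfr}$ for some $\pfr\in W$. Then the displayed inclusion reads $\gamma_{d+1}(P_{\pfr})\subseteq H\cap P_{\pfr}$; by (A1) the closure of $\gamma_{d+1}(P_{\pfr})$ is $P_{\pfr}$, and $H\cap P_{\pfr}$ is closed, so $P_{\pfr}\subseteq H$, contradicting $\pfr\in W$. Hence $\pi_{\pfr}(H)$ is a \emph{proper} subgroup of $P_{\pfr}$ for each $\pfr\in W$, and it is subnormal there (project the chain).

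Finally I would package the estimate. Since $H\subseteq\prod_{\pfr}\pi_{\pfr}(H)$, the index $\prod_{\pfr}[P_{\pfr}:\pi_{\pfr}(H)]=[\Lambda^{*}:\prod_{\pfr}\pi_{\pfr}(H)]$ is at most $[\Lambda^{*}:H]\le x$; in particular $\prod_{\pfr\in W}[P_{\pfr}:\pi_{\pfr}(H)]\le x$. For $\pfr\in W$, $\pi_{\pfr}(H)$ is a proper subnormal subgroup of the hyperspecial $P_{\pfr}$, so $[P_{\pfr}:\pi_{\pfr}(H)]\ge q_{\pfr}^{c}$ by (A2). Therefore $\prod_{\pfr\in W}q_{\pfr}^{c}\le x$, i.e. $c\log q_k\sum_{\pfr\in W}\deg\pfr\le\log x$, whence $\deg(\Div(W))\le(c\log q_k)^{-1}\log x$; adding Lemma~\ref{l:Recall}(1) gives $\deg(\Div(V(H)))\le c_{7}\log x$ with $c_{7}$ depending only on $G$. (If one worries about uniformity in the quasisimplicity and module statements, the finitely many places of bounded residue degree can simply be absorbed into $V(H)$, changing $c_{7}$ only by an additive constant.)
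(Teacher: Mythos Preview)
Your argument is correct and follows essentially the same route as the paper. Both proofs split off $\mathfrak{R}(\pcal)$ via Lemma~\ref{l:Recall}, and for a hyperspecial $\pfr$ with $P_{\pfr}\not\subseteq H$ use the subnormal chain together with the perfectness of $P_{\pfr}$ to force $\pi_{\pfr}(H)\subsetneq P_{\pfr}$; the paper pushes a bit further and shows $\pi_{\pfr}(H)\subseteq Z_{\pfr}$ (the preimage of the centre of the quasisimple quotient), then reads off the index bound directly from $H\subseteq\prod_{\pfr}\pi_{\pfr}(H)$, whereas you stop at properness and invoke your (A2) to get $[P_{\pfr}:\pi_{\pfr}(H)]\ge q_{\pfr}^{c}$. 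The commutator step is also packaged slightly differently: the paper walks down the chain $H=N_s\lhd\cdots\lhd\Lambda^{*}$, picks the first $N_i$ containing $P_{\pfr}$ with $N_{i+1}$ not, and shows $[P_{\pfr},P_{\pfr}]\subseteq N_{i+1}$; you use the equivalent iterated-commutator statement $[\Lambda^{*},H,\dots,H]\subseteq H$ in one line. Where the paper cites \cite[Lemma 4.7]{Lu} (a subnormal subgroup surjecting onto the quasisimple quotient is everything), your (A1)/(A2) amount to reproving that lemma from the structure of the congruence filtration. The overall logic and the final estimate $\prod_{\pfr\in W}q_{\pfr}^{c}\le x$ are the same.
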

\begin{proof}
Let $V(H)=\{\pfr\in V_k^{\circ}|\h H\cap P_{\pfr}\neq P_{\pfr}\}$. By Lemma~\ref{l:Recall}, $\deg(\mathfrak{R}(\pcal)) \le c_2 \log x$. So it is enough to focus on places where $P_{\pfr}$ is hyper-special. Let $P^{(1)}_{\pfr}$ be the first congruence subgroup of $P_{\pfr}$. As $P_{\pfr}$ is a hyper-special parahoric subgroup, $P_{\pfr}/P^{(1)}_{\pfr}$ is a finite quasi-simple group. Hence any proper normal subgroup of $P_{\pfr}/P^{(1)}_{\pfr}$ is contained in its center. Let $Z_{\pfr}$ be the preimage of the center of $P_{\pfr}/P^{(1)}_{\pfr}$. By the above discussion, if $\pfr\in V(H)\setminus \mathfrak{R}(\pcal)$, then either $H\cap P_{\pfr}$ is contained in $Z_{\pfr}$ or $(H\cap P_{\pfr})P^{(1)}_{\pfr}=P_{\pfr}$. By~\cite[Lemma 4.7]{Lu}, if $(H\cap P_{\pfr})P^{(1)}_{\pfr}=P_{\pfr}$, then $H\cap P_{\pfr}=P_{\pfr}$ as $H\cap P_{\pfr}$ is a subnormal subgroup of $P_{\pfr}$. We claim that if $\pfr \in V(H)\setminus \mathfrak{R}(\pcal)$, then $\pi_{\pfr}(H)\subseteq Z_{\pfr}$. By the same argument as above, $\pi_{\pfr}(H)\subseteq P_{\pfr}$ if $\pi_{\pfr}(H)\nsubseteq Z_{\pfr}$. Let
$
H=N_s\lhd N_{s-1} \lhd \cdots \lhd N_1\lhd \prod_{\pfr\in V_k^{\circ}}P_{\pfr}, 
$
and assume that $P_{\pfr}\subseteq N_i$ and $P_{\pfr}\nsubseteq N_{i+1}$. So for any $a\in P_{\pfr}$, there is $y_a\in \prod_{\pfr'\in V_k^{\circ}\setminus\{\pfr\}} P_{\pfr'}$ such that $(a,y_a)$ is in $N_{i+1}$. On the other hand, $N_{i+1}$ is a normal subgroup of $N_i$ and $P_{\pfr}$ is contained in $N_i$. Hence, for any $a,a'\in P_{\pfr}$, $(a,y_a)^{-1}(a',1)^{-1}(a,y_a)(a',1)=(a^{-1}a'^{-1}aa',1)$ is in $N_{i+1}$. Since $P_{\pfr}$ is perfect, it is a subgroup of $N_{i+1}$, which is a contradiction.  
\\

\noindent
Overall, we have shown that 
\[
H\subseteq \prod_{\pfr\in\mathfrak{R}(\pcal)}P_{\pfr}\cdot\prod_{\pfr\in V(H)\setminus\mathfrak{R}(\pcal)} Z_{\pfr}\cdot \prod_{\pfr\not\in V(H)\cup \mathfrak{R}(\pcal)\cup\{\pfr_0\}} P_{\pfr}.
\]
In particular, the index of $H$ in $ \prod_{\pfr\in V_k^{\circ}}P_{\pfr}$ is at least a constant power of $\prod_{\pfr\in V(H)\setminus  \mathfrak{R}(\pcal)} q_{\pfr}$ and so combining Lemma~\ref{l:Recall}, we have the desired result.
\end{proof}
\noindent
By Lemma~\ref{l:Subnormal}, we have that
\[
s^{\triangleleft\triangleleft}_x(\prod_{\pfr\in V_k^{\circ}}P_{\pfr})\le \sum_{T\in \mathcal{V}_k(c_7 \log x)}  s^{\triangleleft\triangleleft}_x(\prod_{\pfr\in T}P_{\pfr})\le \# \mathcal{V}_k(c_7 \log x)\cdot \sup_{T\in \mathcal{V}_k(c_7 \log x)} s_x(\prod_{\pfr\in T}P_{\pfr}),
\]
where $\mathcal{V}_k(y):=\{T\subseteq V_k^{\circ}| \sum_{\pfr\in T}\deg(\pfr)\le y\}$. By Lemma~\ref{l:DivDeg} and the argument after that, $\# \mathcal{V}_k(c_7 \log x)\le x^{c_8}$, where $c_8$ just depends on $G$, and so it is enough to get the desired upper bound only for $s_x(\prod_{\pfr\in T}P_{\pfr})$, where $T\in \mathcal{V}_k(c_7 \log x)$. We note that $P^{(1)}_{\pfr}$ the first congruence subgroup of $P_{\pfr}$ is a pro-$p$ normal subgroup of $P_{\pfr}$ and the index of $\prod_{\pfr\in T}P^{(1)}_{\pfr}$ in $\prod_{\pfr\in T}P_{\pfr}$ is at most $x^{c_9}$, where $c_9$ only depends on $G$. Hence, again by~\cite[Lemma 1.2.2, Proposition 1.3.2]{LS},
\[
s_x(\prod_{\pfr\in T}P_{\pfr})\le s_x(\prod_{\pfr\in T}P^{(1)}_{\pfr})\cdot s_x(\prod_{\pfr\in T}P_{\pfr}/P^{(1)}_{\pfr})\cdot x^{c_9\log x}\le s_x(\prod_{\pfr\in T}P^{(1)}_{\pfr})\cdot  x^{(c_9^2+c_9)\log x}.
\]
Thus the right upper bound for $s_x(\prod_{\pfr\in T}P^{(1)}_{\pfr})$, where  $T\in \mathcal{V}_k(c_7 \log x)$, finishes the proof of Theorem~\ref{t:MaxSubgroupGrowthUpperBound}.
\subsection{Reduction to Theorem~\ref{t:GradedLie} on graded Lie algebras.}~\label{ss:ReductionLieAlgebra}
Following~\cite{LSa,ANS}, in order to understand $s_x(\prod_{\pfr\in T}P^{(1)}_{\pfr})$, where  $T\in \mathcal{V}_k(c_7 \log x)$, we will work with the associated graded Lie algebra. In this section, we complete the proof of Theorem~\ref{t:CountingLattices} modulo Theorem~\ref{t:GradedLie}, which will be proved in \ref{ss:GradedLie}.
\\

\noindent
Let $\lcal$ be the graded Lie algebra associated with the filtration $\prod_{\pfr\in T} P^{(i)}_{\pfr}$ (for the definition of $P^{(i)}_{\pfr}$ and the associated graded Lie algebras see~Sections \ref{ss:Filtration} and \ref{ss:AssociatedGraded}.). Thus $\lcal\simeq \oplus_{\pfr\in T} L_{\Theta_{\pfr}}$.
\begin{prop}\label{p:GradedLie}
In the above setting, let $\cal$ be an $\f_p$-subalgebra of $\lcal$ with finite $\f_p$-codimension; then there are positive numbers $c_{10}$ and $c_{11}$ depending only on $G$ such that
\[
\cdim_{\cal}[\cal,\cal]\le c_{10}\deg T+c_{11}\h\cdim_{\lcal} \cal,
\] 
where $\deg T=\sum_{\pfr\in T} \deg \pfr$.
\end{prop}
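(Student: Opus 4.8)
The plan is to reduce this statement about the $\f_p$-subalgebra $\cal\subseteq\lcal\simeq\oplus_{\pfr\in T}L_{\Theta_\pfr}$ directly to Theorem~\ref{t:GradedLie}, by first passing to an unramified extension to trivialize the residue-field issues and then replacing each parahoric by the ``largest'' one via Corollary~\ref{c:ChangeParahorics} so that Corollary~\ref{c:SpecialAlgebra} applies. Concretely, let $\F$ be the residue field of $\widehat{k}_{\pfr_0}$ (or, place-by-place, of $\widehat{k}_\pfr$), and set $\F$ large enough to split all the $\bbg/\widehat{k}_\pfr$; the key point is that $[\F:\f_p]$ is bounded solely in terms of $G$, since the residue field of $K=k_{\pfr_0}$ is fixed by $G$ and each $\bbg/\widehat{k}_\pfr$ splits over an extension of degree dividing $6$. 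First I would base-change: $\cal\otimes_{\f_p}\F$ is an $\F$-subalgebra of $\lcal\otimes_{\f_p}\F\simeq\oplus_{\pfr\in T}(\L_{\Theta_\pfr})^{[\F:\f_\pfr]}$, where each $\L_{\Theta_\pfr}$ is the graded Lie algebra over the residue field of $\widehat{k}_\pfr$ of the parahoric $\widehat P_{\Theta_\pfr}$; codimensions get multiplied by $[\F:\f_p]$, a constant, and $[\cal,\cal]\otimes\F=[\cal\otimes\F,\cal\otimes\F]$, so it suffices to prove the inequality after this harmless scalar extension.

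Next I would apply Corollary~\ref{c:ChangeParahorics}: for each $\pfr\in T$ there is a subalgebra embedding $\L_{\Theta_\pfr}\hookrightarrow\L_{\{\psi_s\}}$ (the ``largest'' parahoric being the one whose only missing node is $\psi_s$) of codimension at most $\dim\bbg$, which is a constant. Taking $D$ copies (with $D=\sum_{\pfr\in T}[\F:\f_\pfr]\le[\F:\f_p]\,\#T$, again controlled by $\deg T$ up to constants), we realize $\cal\otimes\F$ as a subalgebra of $\mathfrak{L}^D$ of finite codimension, where $\mathfrak{L}=\L_{\{\psi_s\}}\simeq\oplus_{i\ge1}\gfr_{i(\mathrm{mod}\,m)}\otimes t^i$ is exactly the $\bbz/m\bbz$-graded object from Corollary~\ref{c:SpecialAlgebra} (with $m=r$ or $m=2$, and $\widehat{\gfr}=\oplus_{i=0}^{m-1}\gfr_i$ perfect because $\char\F=p\ge5$ exceeds the Cartan-matrix entries — here one uses $p\ge5$ to also handle the exceptional split forms). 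Now I invoke Theorem~\ref{t:GradedLie}: there is a constant $C=C(\widehat{\gfr})$, depending only on the absolute type of $\bbg$ and hence only on $G$, with
\[
\cdim_{\mathfrak{L}^D}[\hfr,\hfr]\le C\bigl(\cdim_{\mathfrak{L}^D}\hfr+D\bigr),
\]
applied to $\hfr=\cal\otimes\F$ viewed inside $\mathfrak{L}^D$.

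Finally I would book-keep the codimensions: $\cdim_{\mathfrak{L}^D}(\cal\otimes\F)\le\cdim_{\lcal\otimes\F}(\cal\otimes\F)+(\text{const})\cdot\#T\le[\F:\f_p]\cdim_\lcal\cal+(\text{const})\deg T$ using the codimension-$\le\dim\bbg$ embeddings at each of the $\le\deg T$ places; and $\cdim_{\lcal\otimes\F}[\cal\otimes\F,\cal\otimes\F]\ge\cdim_{\mathfrak{L}^D}[\cal\otimes\F,\cal\otimes\F]-(\text{const})\deg T$ by the same embeddings (passing from the larger ambient algebra back down costs at most the total codimension of the embeddings). Since $D\le(\text{const})\deg T$, dividing out $[\F:\f_p]$ and collecting constants gives the asserted $\cdim_\cal[\cal,\cal]\le c_{10}\deg T+c_{11}\cdim_\lcal\cal$. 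I expect the only genuine subtlety — beyond careful constant-tracking — to be verifying that $\widehat\gfr$ is perfect for \emph{every} local type occurring, i.e.\ that $p\ge5$ suffices uniformly (including the split exceptional groups, where one must check the characteristic exceeds all Cartan integers, the worst being $3$); this is precisely where the hypothesis $p\ge5$ of the paper is used, and it is already packaged into the last clause of Corollary~\ref{c:SpecialAlgebra}, so the step is safe. The remainder is routine diagram-chasing on codimensions.
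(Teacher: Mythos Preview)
Your overall strategy---base-change, use Corollary~\ref{c:ChangeParahorics} to move to the parahoric of type $\{\psi_s\}$, then invoke Corollary~\ref{c:SpecialAlgebra} and Theorem~\ref{t:GradedLie}---is exactly the paper's. However, there are two genuine gaps in the execution.

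First, your treatment of the base change is confused. The field $\F$ must be the algebraic closure of $\f_p$ (the residue field of $\widehat{k}_\pfr$), not a finite extension: the residue fields $\f_\pfr$ for $\pfr\in T$ have unbounded degree over $\f_p$, so no finite $\F$ with $[\F:\f_p]$ bounded by $G$ can absorb them all. Once $\F=\overline{\f_p}$, the tensor identity $L_{\Theta_\pfr}\otimes_{\f_p}\F\simeq \L_{\Theta_\pfr}^{[\f_\pfr:\f_p]}$ shows that the number of copies at $\pfr$ is $[\f_\pfr:\f_p]=[\f_k:\f_p]\deg\pfr$, not $[\F:\f_\pfr]$ as you wrote; the total $D$ is $[\f_k:\f_p]\deg T$, and it is $[\f_k:\f_p]$ (not $[\F:\f_p]$) that is bounded in terms of $G$. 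Relatedly, $\f_p$-codimensions are \emph{equal} to $\F$-codimensions after base change (no factor of $[\F:\f_p]$), so there is nothing to divide out.

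Second, you cannot place $\cal\otimes\F$ inside a single $\mathfrak{L}^D$. The graded algebra $\L_{\pfr,\{\psi_s\}}$ depends on the type of $\bbg$ over $\widehat{k}_\pfr$: one gets different $\widehat{\gfr}$'s according as $\bbg$ splits over $\widehat{k}_\pfr$, is non-split with reduced $\Phi$, or has non-reduced $\Phi$. The paper partitions $T=T_{\rm s}\cup T_{\rm r}\cup T_{\rm n}$ accordingly, applies Theorem~\ref{t:GradedLie} separately to $\hfr_{\rm s},\hfr_{\rm r},\hfr_{\rm n}$, and then recombines; this costs only an extra factor of $3\,\cdim_\L\hfr$ in the bookkeeping. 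A smaller point: Corollary~\ref{c:ChangeParahorics} gives embeddings $\L_\Xi\hookrightarrow\L_\Delta$, not $\L_{\Theta_\pfr}\hookrightarrow\L_{\{\psi_s\}}$. The paper therefore passes first to a subalgebra $\hfr\subseteq\Hfr=\cal\otimes\F$ of codimension $O(\deg T)$ (the part of $\Hfr$ that, inside $\L_\Delta$, already lies in $\L_{\{\psi_s\}}$) before applying Theorem~\ref{t:GradedLie}; your direct embedding does not exist in general.
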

\begin{proof}
Let $\F$ be the algebraic closure of $\f_p$ and $\Hfr=\cal\otimes_{\f_p} \F$; then $\Hfr$ is an $\F$-subalgebra of
\[
\lcal\otimes_{\f_p}\F\simeq (\lcal\otimes_{\f_p}\f_k)\otimes_{\f_k}\F\simeq (\oplus_{\pfr\in T}\L^{\deg \pfr}_{\Theta_{\pfr}})^{[\f_k:\f_p]},
\]
and the $\f_p$-codimension of $\cal$ in $\lcal$ is equal to the $\F$-codimension of $\Hfr$ in $\lcal\otimes_{\f_p}\F$.  Note that the Galois group of $\F/\f_p$ acts on  $\lcal\otimes_{\f_p} \F$ and $\lcal$ is the subalgebra which is fixed by this action. We claim that $\cdim_{\cal} [\cal,\cal]=\cdim_{\Hfr} [\Hfr,\Hfr]$. Let $\{h_i\}$ be an $\f_p$-basis of $\cal$; then the $\f_p$-span (resp. $\F$-span) of $\{[h_i,h_j]\}$ is equal to $[\cal,\cal]$ (resp. $[\Hfr,\Hfr]$). Hence $[\cal,\cal]$ is the $\f_p$-structure of $[\Hfr,\Hfr]$ under the above Galois group action. Thus $\cal/[\cal,\cal]$ is an $f_p$-structure of $\Hfr/[\Hfr,\Hfr]$, which gives the claimed equality.
\\

\noindent
On the other hand, by Corollary~\ref{c:ChangeParahorics},  $(\oplus_{\pfr\in T}\L^{\deg \pfr}_{\Theta_{\pfr}})^{[\f_k:\f_p]},
$ has an $\F$-subalgebra of $\F$-codimension at most $c_{12}\deg T$  ($c_{12}$ just depends on $G$) which is an $\F$-subalgebra of $\F$-codimension at most $c_{12}\deg T$ in $\L=\oplus_{\pfr\in T}\L^{[\f_k:\f_p]\deg \pfr}_{\pfr,\psi_s}$ where $\L_{\pfr,\psi_s}$ is the graded Lie algebra associated with the filtration of the parahoric subgroup of the  type $\{\psi_s\}$ in $\bbg(\widehat{k}_{\pfr})$ (for the definition of $\psi_s$ see~\ref{ss:IwahoriBasis}.). Hence one can find $\hfr$ an $\F$-subalgebra of $\Hfr$ such that
\begin{itemize}
\item[1-] $\cdim_{\Hfr}\hfr \le c_{12} \deg T$.
\item[2-] $\hfr$ can be embedded in $\L$ as an $\F$-subalgebra, and
\begin{equation}\label{e:LastCodim}
\cdim_{\L} \hfr\le \cdim_{\lcal\otimes_{f_p}\F} \Hfr+  c_{12} \deg T=\cdim_{\lcal} \cal +  c_{12} \deg T.
\end{equation}
\end{itemize}
\noindent
By the above discussion and inequality, we have that 
\begin{equation}\label{e:FirstCodim}
\cdim_{\cal}[\cal,\cal]=\cdim_{\Hfr}[\Hfr,\Hfr]\le \cdim_{\Hfr}[\hfr,\hfr]\le \cdim_{\hfr}[\hfr,\hfr]+c_{12} \deg T.
\end{equation}

\noindent
By Corollary~\ref{c:SpecialAlgebra}, we know that $\L_{\pfr,\psi_s}\simeq\oplus^{\infty}_{i=1}\gfr_{\pfr,i ({\rm mod}\h r_{\phi})}\otimes t^i$, where $\gfr_{\pfr}=\oplus^{r_{\phi}-1}_{i=1}\gfr_{\pfr,i}$ is a finite dimensional  perfect $\F$-algebra. Moreover $\gfr_{\pfr}$ just depends on the type of $\bbg$ over $\widehat{k}_{\pfr}$. Thus
\[
\L\simeq \L(\gfr_{{\rm s}})^{[\f_k:\f_p]\deg T_{{\rm s}}}\oplus\L(\gfr_{{\rm r}})^{[\f_k:\f_p]\deg T_{{\rm r}}}\oplus\L(\gfr_{{\rm n}})^{[\f_k:\f_p]\deg T_{{\rm n}}}
\]
where $T_{{\rm s}}$ consists of the places in $T$ over which $\bbg$ splits, $T_{{\rm r}}$ consists of the places in $T$ over which $\bbg$ does not split but $\Phi$ is reduced, and $T_{{\rm n}}$ consists of the places in $T$ over which $\Phi$ is non-reduced, $\L(\gfr_{\bullet})$'s are the associated graded Lie algebras, and $\deg T_{\bullet}=\sum_{\pfr\in T_{\bullet}} \deg \pfr$. 
\\

\noindent
Let $\L_{\rm s}=\L(\gfr_{{\rm s}})^{[\f_k:\f_p]\deg T_{{\rm s}}}$ and $\hfr_{\rm s}=\hfr\cap  \L_{\rm s}$; similarly define $\L_{\rm r}$, $\L_{\rm n}$, $\hfr_{\rm r}$ and $\hfr_{\rm n}$. Therefore, by Theorem~\ref{t:GradedLie}, there is $c_{13}$ a constant depending on $G$, such that
\begin{itemize}
\item[1-] $\cdim_{ \L_{\rm s}} [\hfr_{\rm s},\hfr_{\rm s}]\le c_{13} (\cdim_{ \L_{\rm s}} \hfr_{\rm s}+ [\f_k:\f_p]\deg T_{\rm s}),$
\item[2-] $\cdim_{ \L_{\rm r}} [\hfr_{\rm r},\hfr_{\rm r}]\le c_{13} (\cdim_{ \L_{\rm r}} \hfr_{\rm r}+ [\f_k:\f_p]\deg T_{\rm r}),$
\item[3-] $\cdim_{ \L_{\rm n}} [\hfr_{\rm n},\hfr_{\rm n}]\le c_{13} (\cdim_{ \L_{\rm n}} \hfr_{\rm n}+ [\f_k:\f_p]\deg T_{\rm n}).$
\end{itemize}
Hence we have
\begin{equation}\label{e:MainCodim}
\begin{array}{ll}
\cdim_{\hfr}[\hfr,\hfr]&\le \cdim_{ \L_{\rm s}} [\hfr_{\rm s},\hfr_{\rm s}]+ \cdim_{ \L_{\rm r}} [\hfr_{\rm r},\hfr_{\rm r}]+\cdim_{ \L_{\rm n}} [\hfr_{\rm n},\hfr_{\rm n}]+ 3 \cdim_{\L}\hfr\\
&\\
&\le c_{13}(3\h\cdim_{\L}\hfr+[\f_k:\f_p]\deg T)+ 3\h\cdim_{\L}\hfr\\
&\\
&=c_{13}[\f_k:\f_p]\deg T+ 3(c_{13}+1) \cdim_{\L}\hfr.
\end{array}
\end{equation}
Now inequalities~\ref{e:LastCodim},~\ref{e:FirstCodim}, and~\ref{e:MainCodim} complete our proof.
\end{proof}
\begin{proof}[Proof of Theorem~\ref{t:MaxSubgroupGrowthUpperBound} modulo Theorem~\ref{t:GradedLie}]
By the previous discussions, we only have to give the right upper bound for $s_x(\prod_{\pfr\in T} P^{(1)}_{\pfr})$, where $T\in \mathcal{V}_k(c_7 \log x)$. We complete the proof as in~\cite[Page 115]{LS}. Let $H$ be a subgroup of index $p^d$ in $\prod_{\pfr\in T} P^{(1)}_{\pfr}$ and 
\[
\lcal(H)=\oplus^{\infty}_{i=1} (H\cap \prod_{\pfr\in T} P^{(i)}_{\pfr}) \prod_{\pfr\in T} P^{(i+1)}_{\pfr} /\prod_{\pfr\in T} P^{(i+1)}_{\pfr},
\]
the associated $\f_p$-subalgebra of $\lcal$. It is clear that the $\f_p$-codimension of $\lcal(H)$ in $\lcal$ is $d$ and $[\lcal(H),\lcal(H)]$ is a subalgebra of $\lcal([H,H])$. Therefore, by Proposition~\ref{p:GradedLie},
\[
d(H):=\dim_{\f_p}(H/([H,H]H^p))\le \cdim_{\lcal(H)}[\lcal(H),\lcal(H)]  \le c_{10} c_7 \log x+c_{11} d.
\]
Thus $d_i(\prod_{\pfr\in T} P_{\pfr}^{(1)}):=\max\{d(H)|\h[\prod_{\pfr\in T} P_{\pfr}^{(1)}:H]=p^i\} \le c_{14} \log x+ c_{11} i.$ and so, by~\cite[Proposition 1.6.2]{LS}, 
\[
a_{p^i}(\prod_{\pfr\in T} P_{\pfr}^{(1)})=\#\{H|\h [\prod_{\pfr\in T} P_{\pfr}^{(1)}:H]=p^i\}\le p^{\sum^{i}_{j=0} d_j(\prod_{\pfr\in T} P_{\pfr}^{(1)})}\le p^{c_{14}i\log x+c_{11} i^2}.
\]
Hence we have
\[
s_x(\prod_{\pfr\in T} P_{\pfr}^{(1)})\le \sum_{p^i\le x}a_{p^i}(\prod_{\pfr\in T} P_{\pfr}^{(1)})\le  \sum_{p^i\le x}  p^{c_{12}i\log x+c_{11} i^2}\le \log x\cdot x^{(c_{14}+c_{11}) \log x},
\]
which finishes our proof modulo Theorem~\ref{t:GradedLie}.
\end{proof}
\subsection{Graded Lie algebras.}\label{ss:GradedLie}
In this section, we prove Theorem~\ref{t:GradedLie}, which completes our proof.
 \begin{lem}~\label{l:LargeCommutator2}
Let $\gfr$ be Lie algebra, $\gfr_0$ and $\gfr_1$ two subspaces of $\gfr$, and $D$ a natural number; then for any $U$ and $V$ subspaces of $\gfr_0^D$ and $\gfr_1^D$, respectively, we have
\[
\cdim_{[\gfr_0,\gfr_1]^D} [U,V] \le \dim(\gfr_1)\h \cdim_{\gfr_0^D} U+\dim[\gfr_0,\gfr_1]\h\cdim_{\gfr_1^D} V.
\]
\end{lem}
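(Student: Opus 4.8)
The plan is to reduce everything to a counting/dimension bookkeeping argument, working one ``slot'' of the $D$-fold direct sum at a time and then summing. First I would set up notation: write $U \subseteq \gfr_0^D$ and $V \subseteq \gfr_1^D$, and let $p_i : \gfr_0^D \to \gfr_0$ (resp. $q_i : \gfr_1^D \to \gfr_1$) be the projection onto the $i$-th coordinate. The key observation is that $[U,V]$, taken inside $\gfr^D$ with the componentwise bracket, is contained in $\bigoplus_{i=1}^D [p_i(U), q_i(V)]$, and in fact $[U,V]$ is built up from the ``diagonal'' pieces $[u^{(i)}, v^{(i)}]$ where $u^{(i)}, v^{(i)}$ are vectors supported in slot $i$; so the hard content is really the $D=1$ case, plus a careful accounting of how the codimension distributes across the slots.

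For the $D=1$ case I would argue as follows. Fix complements so that $\gfr_0 = U' \oplus U$ within... no: rather, pick a subspace $U_1 \subseteq \gfr_0$ with $\gfr_0 = U \oplus U_1$ (after restricting attention to the span of $U$ together with a complement of $U$ in $\gfr_0$, so that $\cdim_{\gfr_0} U = \dim U_1$), and similarly $V_1 \subseteq \gfr_1$ with $\gfr_1 = V \oplus V_1$ and $\dim V_1 = \cdim_{\gfr_1} V$. Then bilinearity of the bracket gives
\[
[\gfr_0, \gfr_1] = [U,V] + [U_1, \gfr_1] + [\gfr_0, V_1],
\]
hence $\cdim_{[\gfr_0,\gfr_1]}[U,V] \le \dim[U_1,\gfr_1] + \dim[\gfr_0, V_1] \le \dim(U_1)\dim(\gfr_1) + \dim(\gfr_0)\dim(V_1)$. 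Wait — the stated bound has $\dim[\gfr_0,\gfr_1]$ in place of $\dim(\gfr_0)$ in the second term, which is the sharper estimate one actually gets by noting $[\gfr_0, V_1] \subseteq [\gfr_0, \gfr_1]$, so $\dim[\gfr_0, V_1] \le \min(\dim[\gfr_0,\gfr_1], \dim(\gfr_0)\dim(V_1)) \le \dim[\gfr_0,\gfr_1]\cdot \dim(V_1)$ — actually one simply uses $[\gfr_0, V_1] \subseteq [\gfr_0,\gfr_1]$ together with the fact that spanning $[\gfr_0,V_1]$ needs at most $\dim(V_1)$ ``columns'' each landing in $[\gfr_0,\gfr_1]$, giving $\dim[\gfr_0,V_1]\le \dim(V_1)\dim[\gfr_0,\gfr_1]$; and similarly $[U_1,\gfr_1]$ needs at most $\dim(U_1)$ rows each of dimension $\le \dim(\gfr_1)$. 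This establishes the $D=1$ inequality with the asymmetric-looking (but correct) constants.

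For general $D$ I would lift this slotwise. Let $d_i = \cdim_{\gfr_0} p_i(U)$ so that $\sum_i d_i \le \cdim_{\gfr_0^D} U$ (projections can only lose codimension — more precisely $\cdim_{\gfr_0^D}U \ge \sum_i \cdim_{\gfr_0} p_i(U)$ since $U \subseteq \bigoplus_i p_i(U)$), and likewise $e_i = \cdim_{\gfr_1} q_i(V)$ with $\sum_i e_i \le \cdim_{\gfr_1^D} V$. Applying the $D=1$ estimate in each slot to the pair $(p_i(U), q_i(V))$ and using that $[U,V] \supseteq \bigoplus_i [\,U\cap\gfr_0^{(i)},\, V\cap\gfr_1^{(i)}\,]$... here I need to be slightly careful, because $[U,V]$ need not split as a direct sum over slots; the clean way is to bound $\cdim_{[\gfr_0,\gfr_1]^D}[U,V] \le \cdim_{[\gfr_0,\gfr_1]^D}\bigl(\bigoplus_i [p_i(U),q_i(V)]\bigr) + \sum_i \cdim_{[p_i(U),q_i(V)]}[\,(\text{slot-}i\text{ part of }U),(\text{slot-}i\text{ part of }V)\,]$, and control both terms. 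In fact the simplest route, which I expect to be what is intended, is: $\cdim_{[\gfr_0,\gfr_1]^D}[U,V] \le \sum_{i=1}^D \cdim_{[\gfr_0,\gfr_1]}[p_i(U), q_i(V)]$ is \emph{false} in general, so instead one reduces directly — replace $U$ by $\bigoplus_i p_i(U)$ and $V$ by $\bigoplus_i q_i(V)$ (this only weakens $[U,V]$, hence only increases codimension, wait — it enlarges $U$ and $V$, hence enlarges $[U,V]$, hence decreases its codimension, which is the wrong direction). So the honest approach is the two-step bound above: first pass from $U$ to $\widetilde U = \bigoplus p_i(U)$ at the cost of $\cdim_{\gfr_0^D}\widetilde U$ vanishing but $\cdim$ of the commutators changing by a controlled amount, then handle the product case slotwise. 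The main obstacle is precisely this: making the passage between $[U,V]$ and the slotwise/product version rigorous, i.e. checking that enlarging $U,V$ to their ``rectangular hulls'' $\bigoplus_i p_i(U), \bigoplus_i q_i(V)$ changes $\cdim_{[\gfr_0,\gfr_1]^D}$ of the bracket by no more than what the left-hand side of the claimed inequality allows — everything else is the elementary complement-and-bilinearity computation of the $D=1$ case summed over $i$.
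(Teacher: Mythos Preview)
Your $D=1$ argument via complements is correct, but the reduction of general $D$ to the slotwise case does not go through, and the ``main obstacle'' you flag at the end is a genuine gap rather than missing bookkeeping. The difficulty is that a single element of a complement $V_1$ to $V$ in $\gfr_1^D$ can be spread across all $D$ coordinates, so $[\gfr_0^D,v]$ can have dimension as large as $D\dim[\gfr_0,\gfr_1]$; hence the global complement identity $[\gfr_0,\gfr_1]^D=[U,V]+[U_1,\gfr_1^D]+[\gfr_0^D,V_1]$ only yields bounds with an extra factor of $D$, which is useless for the application. Passing to the rectangular hulls $\bigoplus_i p_i(U)$, $\bigoplus_i q_i(V)$ goes the wrong way (as you noticed), and there is no evident control on $\cdim_{\bigoplus_i[p_i(U),q_i(V)]}[U,V]$.

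The paper's proof is a genuinely different, extremal argument. Set $d=\dim\gfr_1$ and, among all $D\times d$ matrices ${\bf x}$ with columns in $V$, choose one maximizing $\rho({\bf x})=\sum_{i=1}^D\dim W_i({\bf x})$, where $W_i({\bf x})\subseteq\gfr_1$ is the span of the $i$-th row. Let $t$ be the number of rows with $W_i({\bf x})=\gfr_1$; for these rows $\sum_j[\gfr_0,x_{ij}]=[\gfr_0,\gfr_1]$, so $\sum_{j=1}^d[\gfr_0^D,\vec{x}_j]\supseteq[\gfr_0,\gfr_1]^t\oplus 0^{D-t}$. Replacing $\gfr_0^D$ by $U$ in each of the $d$ summands loses at most $\cdim_{\gfr_0^D}U$ in dimension per summand, giving
\[
\cdim_{[\gfr_0,\gfr_1]^D}[U,V]\le (D-t)\dim[\gfr_0,\gfr_1]+d\,\cdim_{\gfr_0^D}U.
\]
The crux is then the inequality $D-t\le\cdim_{\gfr_1^D}V$, proved by contradiction using extremality: if $D-t>\cdim_{\gfr_1^D}V$, choose for each non-full row $i>t$ some $y_i\in\gfr_1\setminus W_i({\bf x})$; the vectors $\hat y_i\in\gfr_1^D$ supported in slot $i$ with entry $y_i$ span a space that must intersect $V$ nontrivially, and adding such a nonzero $\hat z\in V$ to a redundant column of ${\bf x}$ strictly increases $\rho$, a contradiction. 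This extremal step --- choosing $d$ specific elements of $V$ that cover as many slots as possible, and showing the number of uncovered slots is bounded by $\cdim V$ --- is the idea your approach is missing.
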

\begin{proof}
Let $d=\dim \gfr_1$. As in~\cite[Section 6.3]{LS}, let us consider $M_{D,d}(\gfr_1)$ the set of all $D$ by $d$ matrices with entries in $\gfr_1$. For any ${\bf x}\in M_{D,d}(\gfr_1)$, let $W_i({\bf x})$ be the linear span of the entries in the $i^{th}$ row of ${\bf x}$ and $\rho({\bf x})=\sum_{i=1}^{D}\dim W_i({\bf x})$. Let $V^{(d)}$ be the the subspace of $M_{D,d}(\gfr_1)$ consisting of matrices whose columns are in $V$. Choose ${\bf x}$ in $V^{(d)}$ such that $\rho({\bf x})=\max_{{\bf y} \in V^{(d)}} \rho({\bf y})$. By permuting the rows of ${\bf x}$ and changing $V$ and $U$ if necessary, without loss of generality, we can assume that $W_i({\bf x})=\gfr_1$ for $i\le t$ and $W_i({\bf x})\neq \mfr$ for $t<i$, where $0\le t\le D$. Hence $\sum_{j=1}^{d} [\gfr_0,x_{ij}]=[\gfr_0,\gfr_1]$ for any $i\le t$, and so
\begin{equation}\label{e:CommutatorFullComponents2}
[\gfr_0,\gfr_1]^{t}\oplus 0^{D-t}\subseteq \sum_{j=1}^{d}[\gfr_0^D,\xvec_j],
\end{equation}
where $\xvec_j=(x_{1j},\cdots,x_{Dj})$ is the $j^{th}$ column of ${\bf x}$. On the other hand, for any $j$, we have that
\[
\dim [U,\xvec_j]+\dim U\cap \ker[\bullet,\xvec_j]=\dim U. 
\]
Hence $\dim [U,\xvec_j]\ge \dim [\gfr_0^D,\xvec_j]-\cdim_{\gfr_0^D} U$. Thus
\begin{equation}\label{e:CommutatorDimLower2}
\dim [U,V]\ge \dim \sum_{j=1}^d [U,\xvec_j] \ge \dim \sum_{j=1}^d [\gfr_0^D,\xvec_j]-d\h \cdim_{\gfr_0^D}U.
\end{equation}
By (\ref{e:CommutatorFullComponents2}) and (\ref{e:CommutatorDimLower2}), we have that
\[
\dim[U,V]\ge t\h \dim[\gfr_0,\gfr_1]-d\h\cdim_{\gfr_0^D}U,
\] 
which means 
\begin{equation}\label{e:CommutatorCodim12}
\cdim_{[\gfr_0,\gfr_1]^D}[U,V]\le (D-t) \dim [\gfr_0,\gfr_1]+d\h\cdim{\gfr_0^D}U.
\end{equation}
By (\ref{e:CommutatorCodim12}), it is enough to show that 
\[
(D-t) \dim[\gfr_0,\gfr_1]\le \dim[\gfr_0,\gfr_1]\h \cdim_{\gfr_1^D}V.
\]
 Assume the contrary, i.e. $D-t> \cdim_{\gfr_1^d}V$. In particular, $D>t$, i.e. $W_D({\bf x})$ is a proper subspace of $\gfr_1$. Since $\dim \gfr_1=d$, without loss of generality, we can assume that 
\[
{\rm span}\langle x_{D1},\cdots,x_{Dd}\rangle={\rm span}\langle x_{D2},\cdots,x_{Dd}\rangle.
\]
 For any $i$ larger than $t$, take $y_i\in\gfr_1\setminus W_i({\bf x})$, and let $\hat{y}_i$ be a vector in $\gfr_1^D$ whose only non-zero entry is the $i^{th}$ entry, $y_i$. By the contrary assumption, $V$ intersects ${\rm span}\langle\hat{y}_{t+1},\cdots,\hat{y}_D\rangle$ non-trivially. Let $0\neq\hat{z}\in V\cap {\rm span} \langle\hat{y}_{t+1},\cdots,\hat{y}_D\rangle$. Without loss of generality, we can assume that $\hat{z}_D\neq 0$ and define ${\bf y}$ as follows:
 \[
 y_{ij}=\left\{\begin{array}{cc} x_{ij}&{\rm if}\h j\neq 1,\\ \hat{z}_i+x_{i1}&{\rm if}\h j= 1.\end{array}\right.
 \]
 Therefore ${\bf y}$ is in $V^{(d)}$. On the other hand, $W_i({\bf y})=W_i({\bf x})$ for $i\le t$, and it is easy to see that $\dim W_i({\bf y})\ge \dim W_i({\bf x})$ for $i> t$ where the equality does not hold at least for $i=D$. So $\rho({\bf y})>\rho({\bf x})$ which is a contradiction.
 \end{proof}
\begin{remark}
\begin{itemize}
\item[1-] What is important in this lemma is the fact that the coefficients $\dim \gfr_1$ and $\dim[\gfr_0,\gfr_1]$ are independent of $D$. 
\item[2-] When $\gfr$ is a perfect Lie algebra and $\gfr_0=\gfr_1=\gfr$, this lemma is proved in~\cite[Section 6.2]{LS}. Our argument is a modification of theirs. 
\item[3-] When $\gfr$ is a split Lie algebra and $\gfr_0=\gfr_1=\gfr$, in \cite{ANS}, Ab\'{e}rt, Nikolov, and Szegedy prove a similar inequality with coefficient 2 instead of $\dim(\gfr)$.
\end{itemize}
\end{remark}
\begin{proof}[Proof of Theorem~\ref{t:GradedLie}]
For any $x=\sum^{\infty}_{i=1} x_i t^i$ in $\mathfrak{L}^D$, let ${\rm deg}(x)$ be the smallest integer $i$ such that $x_i$ is not zero, and $ld(x)=x_{{\rm deg}(x)}$. For any $n$, let
\[
\hfr_n=\{ld(x)\h|\h x\in \hfr, {\rm deg}(x)=n\}\cup\{0\}.
\]
In a similar fashion, we can define $[\hfr,\hfr]_n$. Clearly $\hfr_n$ and $[\hfr,\hfr]_n$ are subspaces of $\gfr_n^D$, for any $n$. One can also see that 
\begin{equation}\label{e:codim}
\cdim_{\mathfrak{L}^D}\hfr=\sum^{\infty}_{n=1}\cdim_{\gfr_n^D}\hfr_n,
\end{equation}
and a similar formula holds for the codimension of $[\hfr,\hfr]$. On the other hand, since, for any $n\ge 2m$,
\[
\sum^{m-1}_{i=0} [\hfr_{a(n,i)},\hfr_{b(n,i)}]\subseteq [\hfr,\hfr]_n,
\]
where $a(n,i)=m\lfloor n/(2m) \rfloor+i$ and $b(n,i)=n-a(n,i)$, we have
\[
\cdim_{\gfr^D_n} [\hfr,\hfr]_n\le \cdim_{\gfr^D_n} \sum^{m-1}_{i=0} [\hfr_{a(n,i)},\hfr_{b(n,i)}].
\]
Hence we have
\begin{equation}\label{e:reduction}
\cdim_{\mathfrak{L}^D}[\hfr,\hfr] \le \sum^{\infty}_{n=1} \cdim_{\gfr^D_n} \sum^{m-1}_{i=0} [\hfr_{a(n,i)},\hfr_{b(n,i)}].
\end{equation}
By Lemma~\ref{l:LargeCommutator2}, we know that, for any $0\le i\le m-1$ and $2m\le n$,
\[
\cdim_{[\gfr^D_{a(n,i)},\gfr^D_{b(n,i)}]} [\hfr_{a(n,i)},\hfr_{b(n,i)}]\le \dim\widehat{\gfr}\h(\cdim_{\gfr_{a(n,i)}^D} \hfr_{a(n,i)}+\cdim_{\gfr_{b(n,i)}^D} \hfr_{b(n,i)}).
\]
Since $\widehat{\gfr}$ is a perfect Lie algebra and $\{a(n,i)\h|\h0\le i\le m-1\}$ is a complete residue system mod $m$,  
\[
\gfr_n=\sum^{m-1}_{i=0}[\gfr_{a(n,i)},\gfr_{b(n,i)}],
\]
for any $2m\le n$. Thus, for $2m\le n$, we have
\[
\cdim_{\gfr^D_n} \sum^{m-1}_{i=0} [\hfr_{a(n,i)},\hfr_{b(n,i)}] \le \sum^{m-1}_{i=0} \cdim_{[\gfr^D_{a(n,i)},\gfr^D_{b(n,i)}]} [\hfr_{a(n,i)},\hfr_{b(n,i)}]\hspace{2.25cm}
\]
\[
\hspace{4.25cm}\le\dim\widehat{ \gfr} \sum^{m-1}_{i=0} (\cdim_{\gfr_{a(n,i)}^D} \hfr_{a(n,i)}+\cdim_{\gfr_{b(n,i)}^D} \hfr_{b(n,i)}).
\]
Combining it with the inequality \ref{e:reduction}, we get
\[
\cdim_{\mathfrak{L}^D} [\hfr,\hfr]\le 2\dim\widehat{\gfr}\cdot D+ \dim\widehat{\gfr} \sum^{\infty}_{n=1} \sum^{m-1}_{i=0}  (\cdim_{\gfr_{a(n,i)}^D} \hfr_{a(n,i)}+\cdim_{\gfr_{b(n,i)}^D} \hfr_{b(n,i)}).
\]
It is also easy to see that any positive integer $k$ is equal to either $a(n,i)$ or $b(n,i)$ for at most $4m$ paris of numbers $(n,i)$. Therefore overall we have
\[
\cdim_{\mathfrak{L}^D} [\hfr,\hfr]\le 2\dim\widehat{\gfr}\cdot D+ 4m \dim\widehat{\gfr} \sum^{\infty}_{k=1}\cdim_{\gfr_{k}^D} \hfr_{k}\le C (D+ \cdim_{\mathfrak{L}^D}\hfr),
\]
where $C= 4m \dim\widehat{\gfr}$, which finishes the proof.
\end{proof}

\subsection{A lower bound on the number of subgroups.}~\label{ss:LatticesLowerBound}
In order to get the lower bound of Theorem~\ref{t:CountingLattices}, we essentially follow Shalev's idea~\cite{Sh}. However, we have to be extra careful as we are counting up to  an automorphism of $G$.
\\

\noindent
In fact, we show that inside any maximal lattice we can find at least the claimed number of subgroups which are distinct even up to an automorphism of $G$. 

\begin{lem}\label{l:LowerboundSubgroupMax}
Let $\Gamma$ be a maximal lattice in $G$. Then 
\begin{enumerate}
\item For any $x>x_0$ 
\[
x^{c\log x/\log(\vol(G/\Gamma))}\le s_x(\Gamma)
\]
 where $c$ and $x_0$ just depend on $G$. 
\item For any $x>x_0$
\[
x^{c\log x}\le |\{\Lambda \subseteq\Gamma|\h \text{$\Lambda$ is a subgroup of $\Gamma$},\h [\Gamma:\Lambda]\le x\}/\sim|,
\]
where $x_0$ just depends on $G$, $c$ depends on $\Gamma$ and $\Lambda_1\sim \Lambda_2$ if and only if there is an automorphism $\theta$ of $G$ such that $\Lambda_2=\theta(\Lambda_1)$.
\end{enumerate}
\end{lem}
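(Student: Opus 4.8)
The plan is to exhibit, inside $\Gamma$ itself, a super-polynomial family of finite-index subgroups that remains super-polynomial after passing to $\Aut(G)$-equivalence classes. The family will consist of congruence subgroups at a single auxiliary place, and the whole point will be that in characteristic $p$ the $p$-th power map pushes one deep into the congruence filtration; this is the mechanism of Shalev~\cite{Sh}, which the proof follows. I would keep the notation $k,\pfr_0,\bbg,\{P_\pfr\}_{\pfr\in V_k^\circ},\Lambda$ of~\ref{s:maximal}, so that $\Lambda=\bbg(k)\cap\prod_\pfr P_\pfr$ has finite index in $\Gamma$ with $[\Gamma:\Lambda]\le\vol(G/\Gamma)^{c_0}$ by Lemma~\ref{l:gamma/lambda}. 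By strong approximation $\Lambda$ is dense in $\prod_{\pfr\in V_k^\circ}P_\pfr$, so distinct open subgroups of $\prod_\pfr P_\pfr$ meet $\Lambda$ in distinct subgroups of the same finite index; thus it suffices to produce many open subgroups of $\prod_\pfr P_\pfr$ and to track their index in $\Gamma$. I would fix a place $\pfr_1\neq\pfr_0$ of smallest degree — so $\deg\pfr_1$, hence $q_{\pfr_1}$, is bounded in terms of $G$ — and work with the congruence subgroups $P^{(m+1)}_{\pfr_1}$ of~\ref{ss:Filtration}.

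The key observation is that $[P^{(m+1)}_{\pfr_1},P^{(m+1)}_{\pfr_1}]\subseteq P^{(2m+2)}_{\pfr_1}$ and, since $K$ has characteristic $p$, also $(P^{(m+1)}_{\pfr_1})^{p}\subseteq P^{(p(m+1))}_{\pfr_1}\subseteq P^{(2m+2)}_{\pfr_1}$. Hence $P^{(m+1)}_{\pfr_1}/P^{(2m+2)}_{\pfr_1}$ is an elementary abelian $p$-group, and by~\ref{ss:AssociatedGraded} it is isomorphic to $\bigoplus_{i=m+1}^{2m+1}L^{\Theta_{\pfr_1}}_i$, an $\f_p$-vector space of dimension $\asymp m\cdot\dim\bbg\cdot\log_p q_{\pfr_1}$. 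Counting its $\f_p$-subspaces produces at least $p^{\,c_1(m\dim\bbg\log_p q_{\pfr_1})^{2}}$ subgroups $M$ with $P^{(2m+2)}_{\pfr_1}\le M\le P^{(m+1)}_{\pfr_1}$, each of which, viewed inside $\Gamma$, has index at most $\vol(G/\Gamma)^{c_0}\cdot q_{\pfr_1}^{\,c_2(m+1)\dim\bbg}$. Choosing $m$ as large as possible subject to this index being $\le x$ forces $m\cdot\dim\bbg\cdot\log_p q_{\pfr_1}$ to be of order $\log x-c_3\log\vol(G/\Gamma)$, whence
\[
s_x(\Gamma)\ \ge\ \exp\!\big(c_4\,(\log x-c_3\log\vol(G/\Gamma))^{2}\big).
\]
For $x\ge\vol(G/\Gamma)^{2c_3}$ this is already at least $x^{c_5\log x}$, and in the remaining range $x_0\le x<\vol(G/\Gamma)^{2c_3}$ — where the exponent $\log x/\log\vol(G/\Gamma)$ asked for in~(1) is bounded — the same construction with a small value of $m$, together with a crude polynomial lower bound for $s_x(\Gamma)$, suffices. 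This would give~(1).

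For~(2) I would fix $\Gamma$, so that~(1) reads $s_x(\Gamma)\ge x^{c(\Gamma)\log x}$ for $x\ge x_0$, and then bound the size of an equivalence class among subgroups of $\Gamma$. If $\theta\in\Aut(G)$ satisfies $\theta(\Lambda_1)=\Lambda_2$ with $\Lambda_1,\Lambda_2\le\Gamma$ of finite index, then $\Lambda_2\le\Gamma\cap\theta(\Gamma)$ has finite index in both, so $\theta(\Gamma)$ is one of the finitely many maximal lattices $\Gamma=\Gamma_1,\dots,\Gamma_N$ commensurable with $\Gamma$. By the rigidity results already invoked in Lemma~\ref{l:LatticeDiffer} (Margulis~\cite[Chapter VIII]{Mar} and Prasad~\cite{Pra}), $\theta$ is a field automorphism of $k$ composed with an algebraic isomorphism; since $\Aut(k)$ is finite~\cite{Sc} and the inner ambiguity is controlled by the finite class group of Theorem~\ref{t:class}, each $N_{\Aut(G)}(\Gamma_j)/\Gamma_j$ is finite. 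Hence every $\sim$-class meets $\{\Lambda\le\Gamma:[\Gamma:\Lambda]\le x\}$ in at most $N\cdot x\cdot C(\Gamma)$ members — the factor $x$ accounting for $\Gamma_j$-conjugation — and dividing the bound of~(1) by $xC(\Gamma)$ gives $x^{c'(\Gamma)\log x}$, valid for $x$ beyond a threshold that can be made to depend only on $G$ by shrinking $c'(\Gamma)$.

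The step I expect to be the main obstacle is the core count in the second paragraph: identifying the characteristic-$p$ structure of the Frattini quotient of a deep congruence subgroup and converting the ensuing linear-algebra count into a lower bound with the correct explicit dependence on $\vol(G/\Gamma)$ — exactly the point at which the method of~\cite{Sh} is adapted. A secondary difficulty is the orbit bound in~(2), which rests on the rigidity inputs together with the finiteness of the maximal lattices in a commensurability class and of the relevant class and automorphism groups.
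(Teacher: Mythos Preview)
Your part~(1) follows the paper's Shalev-type construction (subgroups sandwiched between two congruence levels at a single place), but there are two points where it diverges, one harmless and one not. The harmless one: the smallest-degree place of $k$ is \emph{not} bounded in terms of $G$ alone --- it can be as large as $\asymp\log g_k$, hence $\asymp\log\log v$ --- but this does not damage the count, since smaller $q_{\pfr_1}$ only helps. The real gap is in the range $x_0\le x\ll v^{O(1)}$. Your displayed bound $\exp\bigl(c_4(\log x-c_3\log v)^2\bigr)$ collapses near $x\approx v^{c_3}$, and your appeal to ``a crude polynomial lower bound for $s_x(\Gamma)$'' with $x_0$ and exponent depending only on $G$ is unjustified: every subgroup you produce lies inside $\Lambda$, hence has index at least $[\Gamma:\Lambda]$ in $\Gamma$, and $[\Gamma:\Lambda]$ can itself be as large as $v^{c_0}$. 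The paper closes this range by a different mechanism: Rohlfs' short exact sequence makes $\Gamma/\Lambda$ abelian and, modulo a subgroup of order at most $t(\bbg)^{\vare(\bbg)}$, it is $t(\bbg)$-torsion; so some prime $r\mid t(\bbg)$ has $d_r(\Gamma/\Lambda)\gg\log|\Gamma/\Lambda|$, whence already for $x\gg_G 1$ one has $\log s_x(\Gamma)\ge\log s_x(\Gamma/\Lambda)\gg\log|\Gamma/\Lambda|$. Combined with $\log|\Gamma/\Lambda|\ll\log v$ from Lemma~\ref{l:gamma/lambda}, this gives $(\log x)^2/\log v$ for $x<|\Gamma/\Lambda|^{3/2}$. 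The paper also chooses $\pfr_1$ hyper-special with $\deg\pfr_1=O(\log v)$; this makes $\dim_{\f_{\pfr_1}}P_{\pfr_1}^{(n)}/P_{\pfr_1}^{(2n)}=n\dim\bbg$ exact and yields $\log s_y(\Lambda)\gg(\log y)^2/\log q_{\pfr_1}\gg(\log y)^2/\log v$ directly, so the awkward subtraction $-c_3\log v$ never appears.

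For part~(2) your orbit bound does not go through as written, and the paper's route is genuinely different. Your claim that ``$\theta(\Gamma)$ is one of the finitely many maximal lattices $\Gamma_1,\dots,\Gamma_N$ commensurable with $\Gamma$'' is false: there are only finitely many such lattices \emph{up to $G$-conjugacy}, and each conjugacy class is infinite. Even after repairing this, deducing a bound of $N\cdot x\cdot C(\Gamma)$ on the class size from finiteness of $N_{\Aut(G)}(\Gamma_j)/\Gamma_j$ requires a commensurator-type count that your single factor of $x$ does not cover. The paper avoids this altogether by bounding orbits only within the \emph{specific} family $\{\Lambda_W\}$. The key extra input is the choice of $\pfr_1$: besides being hyper-special, it is taken with \emph{trivial decomposition group} in $\Aut(k)$ (such places have positive density by Chebotarev, since $\Aut(k)$ is finite). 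If $\theta(\Lambda_W)=\Lambda_{W'}$ then after rigidity $\theta=\theta_2\circ\theta_1$ with $\theta_1\in\Aut(k)$; since $Q_W$ is the unique non-parahoric local factor, $\theta_1$ must fix $\pfr_1$, hence $\theta_1=\mathrm{id}$ and $\theta\in(\Aut\bbg)(k)$. The count then becomes purely local at $\pfr_1$: one bounds the number of cosets of $\widetilde{Q}_W$ inside the transporter of $Q_W$ into the window $P_{\pfr_1}^{(2n)}\le\cdot\le P_{\pfr_1}^{(n)}$, and finishes with a double-coset estimate via the Cartan decomposition, obtaining at most $q_{\pfr_1}^{cn}$ members per orbit --- linear in $n$ in the exponent, against the $\asymp n^2$ exponent in the number of $\Lambda_W$'s produced.
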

\begin{proof}
Since $\Gamma$ is a maximal lattice, we can find $k$, $\pfr_0$, $\bbg$, $\Lambda$ and
 $\{P_{\pfr}\}_{\pfr\in V_k^{\circ}}$ as described in Section~\ref{s:maximal}. By the discussion in Section~\ref{ss:Type}, we know that if $\deg(\pfr)\gg \log v$ where $v=\vol(G/\Gamma)$ and the implied constant just depends on $G$, then $P_{\pfr}$ is hyper-special. On the other hand, by \cite{Sc}, we
 know that the group of automorphisms of $k$ is finite. Hence, by Chebotarev's density
 Theorem~\cite{FJ}, the set of places $\pfr$ of $k$ whose decomposition group, i.e. $\{\theta\in \Aut(k)|\h \theta(\pfr)=\pfr\}$, is trivial has positive density. Now in order to prove the first part, 
let $\pfr_1$ be any place with degree $O(\log v)$ (the implied constants depend only on $G$) such 
that $P_{\pfr_1}$ is hyper-special and for the second part, let $\pfr_1$ be a place such that 
$P_{\pfr_1}$ is hyper-special and also the decomposition group of $\pfr_1$ is trivial.  For any 
positive integer $n$, $P_{\pfr_1}^{(n)}/P_{\pfr_1}^{(2n)}$ is an $\f_{\pfr_1}$-vector space of 
dimension $n\dim \bbg$. So $P_{\pfr_1}^{(n)}/P_{\pfr_1}^{(2n)}$ has at least $q_{\pfr_1}^{\lfloor (n\dim\bbg)/2\rfloor^2}$ subspaces. Preimage of any such subspace $W$ gives us $Q_W$ a 
subgroup of $P_{\pfr_1}^{(n)}$ which contains $P_{\pfr_1}^{(2n)}$. Let 
\[
\Lambda_W=\bbg(k)\cap \prod_{\pfr\in V_k^{\circ}\setminus\{\pfr_1\}} P_{\pfr}\cdot Q_W.
\]
By strong approximation, we have that $[\Lambda:\Lambda_W]=q_{\pfr_1}^{n \dim\bbg-\dim_{\f{\pfr_1}}W}\le q_{\pfr_1}^{n \dim \bbg}$. 
\\

\noindent
This shows that 
\[
s_{q_{\pfr_1}^{n\dim \bbg}}(\Lambda)\ge q_{\pfr_1}^{\lfloor(n\dim \bbg)/2\rfloor^2},
\]
which (coupled with the way we chose $\pfr_1$) implies that $s_x(\Lambda)\ge x^{c'_2\log x/\log v}$, where $v=\vol(G/\Gamma)$ and $c'_2$ only depends on $G$. Therefore we have 
\begin{equation}\label{e:Ineq}
\log s_x(\Gamma)\gg (\log(x/|\Gamma/\Lambda|))^2/\log v,
\end{equation}
where the implied constant only depends on $G$.
For $x\ge |\Gamma/\Lambda|^{3/2}$, we have $\log (x/|\Gamma/\Lambda|)\ge \frac{1}{3}\log x$ and so by (\ref{e:Ineq}) $\log s_x(\Gamma)\gg (\log x)^2/\log v$ as we wished. On the other 
hand, by the discussion in Section~\ref{s:maximal}, $\Gamma/\Lambda$ is a finite abelian group  
and its quotient by a subgroup of order at most $t(\bbg)^{\vare(\bbg)}$ is $t(\bbg)$-torsion. It is 
worth mentioning that $t(\bbg)$ and $\vare(\bbg)$ have upperbounds which just depend on $G$.  
Hence for some prime factor $r$ of $t(\bbg)$ we have that 
\[
d_r(\Gamma/\Lambda):=\dim_{\f_r}((\Gamma/\Lambda)/(\Gamma/\Lambda)^r)\gg \log |\Gamma/\Lambda|,
\]
where the implied constant only depends on $G$. Thus, for $x\gg 0$, we have
\begin{equation}\label{e:Ineq2}
\log s_x(\Gamma)\ge \log s_x(\Gamma/\Lambda)\gg \log |\Gamma/\Lambda|,
\end{equation}
where the implied constants only depend on $G$. On the other hand, by Lemma \ref{l:gamma/lambda}, we have $\log |\Gamma/\Lambda|\ll \log v$. Thus by (\ref{e:Ineq2}) we have
\[
\log s_x(\Gamma)\gg (\log |\Gamma/\Lambda|)^2/\log v.
\]
Hence for $x< |\Gamma/\Lambda|^{3/2}$ we have $\log s_x(\Gamma)\gg  (\log x)^2/\log v,$ as we wished.
\\

\noindent
 To get the second part, we have to show that only small number of these subgroups are equivalent, which is proved in the next lemma. 

\begin{lem}\label{l:OrbitAut}
For a given $W$ as above there are at most $q_{\pfr_1}^{cn}$ subspaces $W'$ such that $\theta(\Lambda_W)=\Lambda_{W'}$ for some $\theta\in \Aut(G)$, where $c$ only depends on $G$.
\end{lem}
\begin{proof}
Assume that $\theta(\Lambda_W)=\Lambda_{W'}$; then, by a similar argument as in the proof of Lemma~\ref{l:LatticeDiffer}, there are $\theta_1\in \Aut(k)$ and $\theta_2:\leftexp{\theta_1}{\bbg}\rightarrow \bbg$ a $k$-isomorphism such that $\theta=\theta_2\circ\theta_1$. Hence for any $\pfr\in V_k^{\circ}$ the closure of $\Lambda_W$ in $\bbg(k_{\pfr})$ is isomorphic to the closure of $\Lambda_{W'}$ in $\bbg(k_{\theta_1(\pfr)})$. On the other hand, by strong approximation, the closure of $\Lambda_W$ is either $P_{\pfr}$ if $\pfr\neq \pfr_1$ or $Q_W$ if $\pfr=\pfr_1$ and $P_{\pfr}$'s are parahoric subgroups and $Q_W$ is not. Thus $\theta_1(\pfr_1)=\pfr_1$. So by the way that we chose $\pfr_1$, we have that $\theta_1$ is trivial, $\theta\in (\Aut\h\bbg)(k)$, and moreover $\theta\in \prod_{\pfr\in V_k^{\circ}\setminus \{\pfr_1\}} \widetilde{P}_{\pfr}$, where 
\[
\widetilde{P}_{\pfr}=\{\theta_{\pfr}\in (\Aut\h\bbg)(k_{\pfr})|\h \theta_{\pfr}(P_{\pfr})=P_{\pfr}\}.
\]
It also belongs to 
\[
\widetilde{N}(Q_W,P_{\pfr}^{(n)})=\{\theta_{\pfr_1}\in (\Aut\h\bbg)(k_{\pfr_1})|\h P_{\pfr_1}^{(2n)}\subseteq \theta_{\pfr_1}(Q_W)\subseteq P_{\pfr_1}^{(n)}\}.
\]
Overall the number that we are interested in is at most the number of left cosets of 
\[
(\Aut\h\bbg)(k)\cap \prod_{\pfr\in V_k^{\circ}\setminus\{\pfr_1\}}\widetilde{P}_{\pfr}\cdot \widetilde{Q}_W
\]
in
\[
(\Aut\h\bbg)(k)\cap \prod_{\pfr\in V_k^{\circ}\setminus\{\pfr_1\}}\widetilde{P}_{\pfr}\cdot \widetilde{N}(Q_W,P_{\pfr}^{(n)}),
\]
where $\widetilde{Q}_W=\{\theta_{\pfr_1}\in (\Aut\h\bbg)(k_{\pfr_1})|\h \theta_{\pfr_1}(Q_W)=Q_W\}.$
So it is at most the number of left cosets of $\widetilde{Q}_W$ in $\widetilde{N}(Q_W,P_{\pfr_1}^{(n)})$,
which is at most
\[
\#(\Aut\h\bbg)(k_{\pfr_1})/\Ad(\bbg(k_{\pfr_1}))\cdot\# N(Q_W,P_{\pfr_1}^{(n)})/N_{\bbg(k_{\pfr_1})}(Q_W),
\]
where $N(Q_W,P_{\pfr}^{(n)})=\{g\in\bbg(k_{\pfr_1})|\h P_{\pfr_1}^{(2n)}\subseteq g Q_W g^{-1}\subseteq P_{\pfr_1}^{(n)}\}$. As the first factor just depends on $G$, without loss of generality we will give an upper bound for the second factor. $P_{\pfr_1}$ acts from left on $X=N(Q_W,P_{\pfr_1}^{(n)})/N_{\bbg(k_{\pfr_1})}(Q_W)$ and
\begin{equation}\label{e:NumberLeftCoset}
\#X=\sum_{[g]\in P_{\pfr_1}\!\!\setminus X} \#P_{\pfr_1}/(P_{\pfr_1}\cap N_{\bbg(k_{\pfr_1})}(gQ_Wg^{-1})).
\end{equation}
Since $g\in N(Q_W,P_{\pfr_1}^{(n)})$, we have that 
$
P_{\pfr_1}^{(2n)}\subseteq g Q_W g^{-1}\subseteq P_{\pfr_1}^{(n)}.
$
Hence $P_{\pfr_1}^{(n)}$ is contained in $N_{\bbg(k_{\pfr_1})}(gQ_Wg^{-1})$ and so, by equation~(\ref{e:NumberLeftCoset}), the number of elements of $X$ is at most $\#P_{\pfr_1}\!\!\!\setminus\! X\cdot \#P_{\pfr_1}/P_{\pfr_1}^{(n)}$. Because the latter term is at most $q_{\pfr_1}^{c_{15}n}$, where $c_{15}$ just depends on $G$, it is enough to give the right upper bound for the number of elements of 
\[
P_{\pfr_1}\!\!\!\setminus\! N(Q_W,P_{\pfr_1}^{(n)})/N_{\bbg(k_{\pfr_1})}(Q_W).
\]
It is clear that the number of discussed double cosets is at most equal to the number of right cosets of $P_{\pfr_1}$ in $Y=\{g\in \bbg(k_{\pfr_1})|\h g P_{\pfr_1}^{(2n)}g^{-1}\subseteq P_{\pfr_1}^{(n)}\}$. By a similar argument as above, we have
\[
\#P_{\pfr_1}\!\!\!\setminus \!Y=\sum_{[g]\in P_{\pfr_1}\!\!\setminus Y/P_{\pfr_1}} \# (g^{-1}P_{\pfr_1}g\cap P_{\pfr_1})\!\!\setminus P_{\pfr_1}\le \#P_{\pfr_1}\!\!\setminus \!Y/P_{\pfr_1}\cdot \#P_{\pfr_1}^{(2n)}\!\!\setminus P_{\pfr_1}.
\]
Again the latter term is at most $q_{\pfr_1}^{c_{16}n}$, where $c_{16}$ only depends on $G$, so it is enough to give the right upper bound for the number of double cosets of $P_{\pfr_1}$ in $Y$. The latter is a direct consequence of the definitions of $Y$ and $P_{\pfr_1}^{(n)}$'s, and the Cartan decomposition~\cite[Section 3.3.3]{T}.
\end{proof}
To complete the proof of Lemma~\ref{l:LowerboundSubgroupMax}, it is enough to notice that, for any positive integer $n$, $\Lambda$ has at least $p^{c_{17}n^2}$ many subgroups $\Lambda_W$'s of index at most $p^{c_{18} n}$,  where $c_{17}, c_{18} $ only depend on $\Gamma$. On the other hand, by Lemma~\ref{l:OrbitAut}, each orbit of $\Aut(G)$ intersects this set of lattices in a set of at most $p^{c_{19} n}$ many elements, where $c_{19}$ only depends on $G$. Hence, overall, the proof of the lower bound is completed as we get  $p^{c_{20}n^2}$ many lattices of index at most $p^{c_{18} n}$ in $\Lambda$, which are distinct up to automorphisms of $G$.
\end{proof}
\begin{proof}[Proof of the lower bound of Theorem~\ref{t:CountingLattices}]
We fix a maximal lattice in $G$ and apply the second part of Lemma~\ref{l:LowerboundSubgroupMax}.
\end{proof}
\subsection{Completion of the proofs.}~\label{ss:SubgroupGrowth}
\begin{proof}[Proof of Theorem~ \ref{t:SubgroupGrowth}] By the definition of $\Lambda_0$, it is 
equal to $\bbg(k)\cap\prod_{\pfr\in V_k^{\circ}} Q_{\pfr}$, where $Q_{\pfr}$ is an open compact 
subgroup of $\bbg(k_{\pfr})$. So, for any $\pfr$, there is a maximal parahoric subgroups 
$P_{\pfr}$ which contains $Q_{\pfr}$. Note that for almost all places, $P_{\pfr}=Q_{\pfr}$ and it 
is a hyper-special parahoric subgroup~\cite{T}. So the lower bound can be proved using a similar 
construction as in the proof of Lemma \ref{l:LowerboundSubgroupMax}. In fact, the argument 
here is much easier as the implied constants can depend on $\Lambda_0$. We take any $\pfr$ such that $Q_{\pfr}=P_{\pfr}$ is a hyper-special parahoric and construct subgroups of $\Lambda_0$ using subspaces of $P^{(n)}_{\pfr}/P^{(2n)}_{\pfr}$ as in the proof of Lemma \ref{l:LowerboundSubgroupMax}.
\\

\noindent
For the upper bound, we notice that $\Lambda_0$ is 
contained in $\Lambda=\bbg(k)\cap\prod_{\pfr\in V_k^{\circ}} P_{\pfr}$  and $c_x(\Lambda_0)\le c_{[\Lambda:\Lambda_0]x}(\Lambda)$, and so it is enough to give the right upper bound for the 
number of congruence subgroups of $\Lambda$. By the strong approximation, we have to estimate 
$s_x(\prod_{\pfr\in V_k^{\circ}}P_{\pfr})$, which has been done in the second half of Section \ref{ss:ReductionProP} and Section \ref{ss:ReductionLieAlgebra}.
\end{proof}
\begin{proof}[Proof of Corollary~ \ref{c:SubgroupGrowth}] 
Here we can and will normalize the Haar measure in a way that the covolume of any lattice is at least 1 (by changing $C$, $D$ and $x_0$ if necessary.). We first prove the upper bound. By Theorem~\ref{t:MaxSubgroupGrowthUpperBound}, we have
\[
\log s_y(\Gamma_{max})\ll (\log y)^2,
\]
for any maximal lattice $\Gamma_{max}$ and $y\gg \vol(G/\Gamma)$, where the implied constants only depend on $G$. In particular, for any $x\ge x_0$ (where $x_0$ only depends on $G$) and any maximal lattice $\Gamma_{max}$ in $G$, we have
\[
\log s_x(\Gamma_{max})\le \log s_{v_m x}(\Gamma_{max})\ll (\log (v_m x))^2,
\]
where the implied constant only depend on $G$ and $v_m=\vol(G/\Gamma_{max})$.
  For an arbitrary lattice $\Gamma$ in $G$, let $\Gamma_{max}$ be a maximal lattice in $G$ containing $\Gamma$. Then
\[
\log s_x(\Gamma)\le \log s_{|\Gamma_{max}/\Gamma|x}(\Gamma_{max})\ll (\log (|\Gamma_{max}/\Gamma| v_m x))^2=(\log(\vol(G/\Gamma) x))^2.
\]
This gives us the claimed upper bound.
\\

\noindent
For a given lattice $\Gamma$ in $G$, let $\Gamma_{max}$ be a maximal lattice containing it. 
Furthermore let $\Lambda_{max}$, $\bbg$ and $\{P_{\pfr}\}$ be the parameters given by Rohlfs's criterion and 
$\Lambda=\Lambda_{max}\cap \Gamma$. Let $v_m=\vol(G/\Gamma_{max})$ and 
$v=\vol(G/\Gamma)=v_m [\Gamma_{max}:\Gamma]$. We notice that $\Gamma/\Lambda$ can be embedded into $\Gamma_{max}/\Lambda_{max}$. Hence 
\begin{enumerate}
\item By Lemma~\ref{l:gamma/lambda}, $\log |\Gamma/\Lambda| \ll \log v_m$, where the implied constant just depends on $G$.
\item $\Gamma/\Lambda$ is a finite abelian group. Its quotient by a subgroup of order 
$t(\bbg)^{\vare(\bbg)}$ is $t(\bbg)$-torsion. (Let us again recall that $t(\bbg)$ and $\vare(\bbg)$ 
have upper bounds which only depend on $G$.)
\end{enumerate}
 On the other hand, $|\Lambda_{max}/\Lambda|\le |\Gamma_{max}/\Gamma|\le v$. By  CSP and MP, the profinite closure $\widehat{\Lambda}$ of $\Lambda$ can be viewed as an open subgroup of the profinite closure $\widehat{\Lambda}_{\max}$ of $\Lambda_{\max}$. Therefore $\widehat{\Lambda}$ is a subgroup of index at most $v$ in  $\prod_{\pfr\in V_k^{\circ}} P_{\pfr}$. By the discussions in Section~\ref{ss:ReductionProP} and 
Lemma~\ref{l:Subnormal}, for any $\pfr$ with $\deg \pfr\gg \log v$ (where the implied constant only depends on $G$), we have that 
\begin{enumerate}
\item $P_{\pfr}$ is hyper-special.
\item $P_{\pfr}\subseteq \widehat{\Lambda}$.
\end{enumerate}
Thus by an identical argument as in the proof of the first part of Lemma~\ref{l:LowerboundSubgroupMax}, one can finish the proof. 
\end{proof}

\subsection{Appendix: table of notations.}
\[
\begin{array}{l|l}
p& \mbox{A prime number larger than 3}\\
K& \mbox{A local field of characteristic $p$}\\
\bbg_0& \mbox{A simply connected absolutely almost simple $K$-group}\\
G & \bbg_0(K)\\
k&  \mbox{A function field}\\
q_k& \mbox{Number of elements of the constant field of $k$}\\
g_k& \mbox{The genus of $k$}\\
h_k& \mbox{The class number of $k$}\\
Br(\bullet)&\mbox{The Brauer group}\\
l& \mbox{A finite extension of $k$ of degree either $2$ or $3$}\\
h_l& \mbox{The class number of $l$}\\
V_k& \mbox{The set of equivalence classes of all the places on $k$}\\
\Div(k)&\mbox{The group of divisors of $k$}\\
\Div^+(k)&\mbox{The set of effective divisors of $k$}\\
\Div(T)&=\sum_{\pfr\in T} \pfr\\
\deg(T)&=\sum_{\pfr\in T}\deg \pfr\\
\mathcal{V}_k(y)&=\{T\subseteq V_k^{\circ}| \sum_{\pfr\in T}\deg(\pfr)\le y\}\\
l(D)&=\dim_{\f_k}(\{x\in k^{\times}| (x)+D\ge 0\}\cup\{0\}) \h\mbox{for a given divisor $D$}\\
k_{\pfr}& \mbox{The $\pfr$-adic completion of $k$}\\
\o_{\pfr} &\mbox{The ring of integers in $k_{\pfr}$}\\
\f_{\pfr}& \mbox{The residue field of $k_{\pfr}$}\\
\o(\pfr)&\mbox{The ring of $\pfr$-integers in $k$}\\
\bba_k&\mbox{The ring of adeles of $k$}\\
\pfr_0 & \mbox{A fixed element of $V_k$ such that $k_{\pfr}\simeq K$}\\
V^{\circ}_k&=V_k\setminus\{\pfr_0\}\\
\mu_n&\mbox{The schematic kernel of $x\mapsto x^n$ from $\bbg_m$ to itself}\\
R_{l/k}^{(1)}(\mu_n)&\mbox{The schematic kernel of the norm map from $R_{l/k}(\mu_n)$ to $\mu_n$}\\
\bbg & \mbox{A simply connected absolutely almost simple $k$-group}\\
\mu & \mbox{The center of $\bbg$}\\
\adg & \mbox{The adjoint form of $\bbg$}
\end{array}
\]
\[
\begin{array}{l|l}
\gcal& \mbox{The unique quasi-split inner $k$-form of $\bbg$}\\
\adgcal& \mbox{The adjoint form of $\gcal$}\\
\bbg & \simeq \bbg_0\h \mbox{over}\h K\\
P_{\pfr}& \mbox{A parahoric subgroup in  $\bbg(k_{\pfr})$}\\
P_{\pfr}^m& \mbox{A ``large" parahoric subgroup whose intersection with $P_{\pfr}$ contains an Iwahori}\\
\pcal_{\pfr}& \mbox{One of the ``largest" parahoric subgroups of $\gcal(k_{\pfr})$}\\
\Theta_{\pfr}&\mbox{The type of the parahoric subgroup $P_{\pfr}$}\\
\overline{P}_{\pfr}&\mbox{The set of stabilizer of $P_{\pfr}$ in $\adbbg(k_{\pfr})$}\\
\Cl(\adbbg,\{\overline{P}_{\pfr}\}_{\pfr\in V_k^{\circ}})&=\adbbg(\bba_k)/(\adbbg(k)\cdot\adbbg(k_{\pfr_0})\prod_{\pfr\in V_k^{\circ}}\overline{P}_{\pfr})\h \mbox{(The class group of $\adbbg$ w.r.t. $\{\overline{P}_{\pfr}\}$)}\\
\cl(\adbbg,\{\overline{P}_{\pfr}\}_{\pfr\in V_k^{\circ}})&=\#\Cl(\adbbg,\{\overline{P}_{\pfr}\}_{\pfr\in V_k^{\circ}})\h \mbox{(The class number of $\adbbg$ w.r.t. $\{\overline{P}_{\pfr}\}$)}\\
P_{\pfr}^{(i)}&\mbox{The $i^{th}$ congruence subgroup of $P_{\pfr}$ (except in Lemma~\ref{l:Type})}\\
e(\pfr)& \mbox{Local factor in covolume formula}\\
e_m(\pfr)& \mbox{Local factor associated with $P_{\pfr}^m$}\\
e_{qs}(\pfr)& \mbox{Local factor associated with $\pcal_{\pfr}$}\\
e'(\pfr) & \mbox{Correctional factor in the covolume formula}\\
Z(\gcal) & \mbox{The product of $e_{qs}(\pfr)$}\\
\sfr(\gcal) & \mbox{A number which depends on the type of $\gcal$ (ref. \ref{s:Covolume})}\\
B(\gcal) &=q_k^{(g_k-1)\dim \gcal} (q_l^{g_l-1}/q_k^{(g_k-1)[l:k]})^{\sfr/2}\\
\vol & \mbox{The Haar measure on $G$ such that $\vol(P_{\pfr_0})=1$}\\
\vare(\bbg)&=2 ({\rm resp.} =1) \mbox{if $\bbg$ is of type ${\rm D}_r$ with $r$ even (resp. otherwise)}\\
t(\bbg)& \mbox{The exponent of $\mu$} \\
T(\bbg)& =\{\pfr\in V_k|\h\text{ $\bbg$ splits $/ \widehat{k}_{\pfr}$ and $\bbg$ is not quasi-split $/ k_{\pfr}$}\}\\
T_c&\mbox{The set of ramified primes  at the level of commensurability (ref. Def.\ref{d:Ramified})}\\
T_l&\mbox{The set of ramified primes at the local level (ref. Def.\ref{d:Ramified})}\\
\mathfrak{R}(\Gamma)&=T_c\cup T_l\\
\dcal_{\pfr} & \mbox{The local Dynkin diagram of $\bbg/k_{\pfr}$}\\
\Xi_{\pfr}& \mbox{The subgroup of $\Aut\h \dcal_{\pfr}$ coming from $\adbbg(k_{\pfr})$}\\
\xi_{\pfr} & \mbox{The homomorphism from $H^1(k,\mu)$ to $\Aut\h \dcal_{\pfr}$}\\
\xi^{\circ} & =(\xi_{\pfr})_{\pfr\in V_k^{\circ}}\\
\xi & =(\xi_{\pfr})_{\pfr\in V_k}\\
H^1(k,\mu)_{\xi^{\circ}} & =\ker \xi^{\circ}\\
H^1(k,\mu)_{\xi} & =\ker \xi\\
\bullet^{\times}&\mbox{The group of units of the given ring}\\
\mathfrak{M}_x&\mbox{A set of representatives of maximal lattices in $G$ with covolume at most $x$}\\
\rho_x(G)&\mbox{Number of lattices with covolume at most $x$ in $G$, up to an automorphism of $G$}\\
m_x(G)&\mbox{The number of maximal lattices, up to $\Aut(G)$, with covolume at most $x$.}\\
s_x({\bullet})&\mbox{Number of subgroups of index at most $x$}\\
s_x^{\triangleleft\triangleleft}({\bullet})&\mbox{Number of subnormal subgroups of index at most $x$}\\
c_x(\bullet)&\mbox{Number of congruence subgroups of index at most $x$}\\
a_x(\bullet)&\mbox{Number of subgroups of index $x$}\\
d(\bullet)&\mbox{Minimum number of elements of a generating set of a pro-$p$ group}\\
d_i(\bullet)&\mbox{Maximum of $d(\bullet)$'s for all the subgroups of index $p^i$}
\end{array}
\]
{\sc Dept. of Math, Univ. of California, San Diego, CA, 92093-0112}\\
{\it E-mail address:}{\tt: asalehigolsefidy@ucsd.edu}
\end{document}